\newcommand{\Tr}{ \mbox{{\rm Tr}}}
\newcommand{\ds}{\displaystyle}
\newcommand{\supp}{\mathrm{supp}\;}
\newcommand{\diver}{{\rm{div}}}
\newtheorem{remark}{\textbf{Remark}}[section]
\newtheorem{lemma}{\textbf{Lemma}}[section]
\newtheorem{theorem}{\textbf{Theorem}}[section]
\newtheorem{proposition}{\textbf{Proposition}}[section]
\numberwithin{equation}{section}
\title[A LG scheme for first order MFGs]{A Lagrange-Galerkin scheme for first order mean field games systems}
\author[Elisabetta Carlini]{Elisabetta Carlini\cs}
\thanks{\cs Dipartimento di Matematica Guido Castelnuovo, Sapienza Universit\`a di Roma, 00185 Rome, Italy (carlini@mat.uniroma1.it).}
\author[Francisco J. Silva]{Francisco J. Silva\za}
\thanks{\za Institut de recherche XLIM-DMI, UMR-CNRS 7252, Facult\'e des Sciences et Techniques, 
Universit\'e de Limoges, 87060 Limoges, France (francisco.silva@unilim.fr).}
\author[Ahmad Zorkot]{Ahmad Zorkot\ha}
\thanks{\ha Institut de recherche XLIM-DMI, UMR-CNRS 7252, Facult\'e des Sciences et Techniques, 
Universit\'e de Limoges, 87060 Limoges, France (ahmad.zorkot@unilim.fr).}
\newcommand{\cs}{$^1$} \newcommand{\za}{$^2$}\newcommand{\ha}{$^3$}
\def\dd{{\rm d}}
\newcommand{\ov}[1]{\overline{#1}}
\def\weight(#1,#2){c_{#1,#2}}
\def\Dt{\Delta t}
\def\Dx{\Delta x}
\def\A{\mathcal{A}}
\def\B{\mathcal{B}}
\def\G{\mathcal{G}}
\def\I{\mathcal{I}}
\def\L{\mathcal{L}}
\def\P{\mathcal{P}}
\def\SS{\mathcal{S}}
\def\eps{\varepsilon}
\def\supp{\mathop{\rm supp}}
\def\1B{{\bf  1}}
\newcommand{\NN}{\mathbb{N}}
\newcommand{\ZZ}{\mathbb{Z}}
\newcommand{\OO}{\mathcal{O}}
\newcommand{\RR}{\mathbb{R}}
\newcommand\be{\begin{equation}}
\newcommand\ee{\end{equation}}
\newcommand\ba{\begin{array}}
\newcommand\ea{\end{array}}
\newcommand{\bean}{\begin{eqnarray*}}
\newcommand{\eean}{\end{eqnarray*}}
\def\ds{\displaystyle}
\begin{document}

\begin{abstract}
In this work, we consider a first order mean field games system with non-local couplings. A Lagrange-Galerkin scheme for the continuity equation, coupled with a semi-Lagrangian scheme for the Hamilton-Jacobi-Bellman equation, is proposed to discretize the mean field games system. The convergence of solutions to the scheme towards a solution to the mean field game system is established in arbitrary space dimensions. The scheme is implemented to approximate two mean field games systems in dimension one and two.
\end{abstract}

\maketitle
\thispagestyle{empty} 

\vspace{-0.3cm}
{\small
\noindent {\bf AMS-Subject Classification:} 91A16, 49N80, 35Q89, 65M12.  \\[0.5ex]
\noindent {\bf Keywords:} First order mean field games, Lagrange-Galerkin schemes, semi-Lagrangian schemes, convergence results, numerical experiences.
}
\vspace{-0.4cm}
\section{Introduction}
In view of its applications in Economics, Physics, and Social Sciences, the study of optimal control problems and differential games with a large number of agents has  attracted the attention of several researchers during the last two decades. An important step in this direction has been achieved with the introduction of the theory of Mean Field Games (MFGs) by J.-M. Lasry-Lions \cite{LasryLions06i, LasryLions06ii, LasryLions07} and, independently, by M. Huang, R.P. Malham\'e, and P.E. Caines \cite{HMC06}. The main purpose of this theory is to characterize Nash equilibria for a class of symmetric differential games with a continuum of agents.  One of the main applications of MFGs theory is that such equilibria can be used to provide approximate equilibria for the corresponding games with a large, but finite, number of players. In its standard form, MFGs are described by a system of two Partial Differential Equations (PDEs); a Hamilton-Jacobi-Bellman (HJB) equation, describing the optimal cost of a typical player in the game, and a Fokker-Planck (FP) equation, describing the evolution of the initial distribution when all the players act optimally. We refer the reader to the  monographs~\cite{MR3559742,MR3752669,MR3753660}, the survey~\cite{MR3195844}, and the lectures~\cite{MR4214773} for a throughout overview on MFGs. 

The numerical approximation of MFGs with nonlocal couplings has been an active research topic in recent years. In the case where the MFGs system includes nondegenerate second order terms, finite-difference schemes have been studied in~\cite{AchdouCapuzzo10,MR2974160,MR3097034,MR3452251,MR3698446}, semi-Lagrangian scheme where investigated in~\cite{MR3828859}, and machine learning methods such as deep learning and reinforcement learning have been analyzed  in~\cite{MR4264647,MR4522347,MR4440805}. In the case where the dynamics of the underlying differential games are deterministic, the resulting MFGs system is of first order and several numerical methods have been proposed to approximate its solutions; see e.g.~\cite{MR3148086,camsilva12} for semi-Lagrangian discretizations,~\cite{MR4030259,GS2022} for the approximation by discrete-time finite state space MFGs (see~\cite{gomes10}), and~\cite{MR3962816,MR4322102} for Fourier analysis techniques. We refer the reader to~\cite{MR4214777,MR4368188}, and the references therein, for an overview on numerical methods to approximate MFGs equilibria including also the case of local couplings and variational methods. 

In this paper we focus our attention on the approximation of first order MFGs systems. Namely, we consider the PDE system
\be
\label{MFG} 
\ba{rcl}
-\partial_{t}v+H(x,D_{x}v)&=&F(x, m(t))\quad\hbox{in }]0,T[\times\RR^d,\\[3pt]
v(T,x)&=& G(x,m(T))\quad\mbox{in }\RR^{d},\\[3pt]
\partial_{t}m -\diver\big(D_{p}H(x,D_{x}v)m\big)&=& 0\quad\hbox{in }\RR^d\times ]0,T[,\\[3pt]
m(0)&=& m_0^*,
\ea
\tag{$\text{{\rm MFG}}$}
\ee
\normalsize
where $H\colon\RR^{d}\times\RR^{d}\to\RR$ is convex with respect to its second argument and, denoting by $\P_{1}(\RR^{d})$ the set of probability measures over $\RR^{d}$ with finite first order moment, $F\colon\RR^d\times\P_{1}(\RR^d)\to\RR^d$, $G\colon\RR^d\times\P_{1}(\RR^d)\to\RR^d$, and $m_{0}^{*}\in\P_{1}(\RR^{d})$. In the article~\cite{camsilva12}, the authors propose a convergent semi-discrete scheme to approximate solutions to~\eqref{MFG}. A fully-discrete version has been proposed in~\cite{MR3148086}. In the proposed scheme, the HJB equation is discretized by using a semi-Lagrangian approximation of the HJB equation (see e.g.~\cite{falconeferretilibro}), while the FP equation, or continuity equation, is approximated by a scheme which is dual to a linearized version of the scheme for the HJB equation. The existence of solutions to this approximation is shown and a convergence result to a solution to~\eqref{MFG} is established when the dimension $d$ of the space variable is equal to one. An extension of this scheme to the case where~\eqref{MFG} involves non-local and fractional diffusions terms has been studied in~\cite{Chowdhury_et_al_2022}. If the resulting system has non-smooth solutions, the convergence of solutions of the scheme is also shown when the space dimension is equal to one. 

In order to obtain a convergent scheme for general state dimensions, the key point is to provide a scheme which preserves, under standard conditions on the data (see Section~\ref{sec:assumptions} below), the main properties of solutions to both equations in~\eqref{MFG}. Namely, the boundedness,  Lipschitzianity, and semiconcavity of the solution to the HJB equation and a uniform compact support, equicontinuity, and uniform bounds in $L^{\mathsf{p}}$ spaces for solutions to the continuity equation. As shown in~\cite{MR3180719,MR3148086,Chowdhury_et_al_2022}, a standard semi-Lagrangian scheme for the HJB equation enjoys the former properties under suitable assumptions on the discretization parameters. In order to treat the continuity equation, we consider the Lagrange-Galerkin (LG) scheme introduced in~\cite{morton} and recalled in Section~\ref{sec:lg_continuity_equation} below. As we show, it turns out that, for a specific choice of the basis functions, the resulting scheme for the continuity equation coincides with the one introduced in~\cite{MR2771664} and further studied in~\cite{MR2826759} for Lipschitz velocity fields. The desired  properties for the solutions to this scheme are established in Section~\ref{sec:prop_lg_scheme}. In particular, we provide a uniform $L^{\mathsf{p}}$ estimate, not available in the schemes considered in~\cite{MR3148086,Chowdhury_et_al_2022} in arbitrary space dimensions, which will play a key role in our main convergence result. Combining the semi-Lagrangian scheme for the HJB equation and the LG scheme for the continuity equation, we obtain a discretization of~\eqref{MFG} for which the existence of solutions is established and, using stability and compactness arguments, the convergence to a solution to~\eqref{MFG} is established.

The rest of this article is organized as follows. In Section~\ref{sec:preliminaries} we fix some standard notation, and we state our main assumptions on the data of~\eqref{MFG}. Some important results about solutions to HJB and continuity equations are recalled, as well as existence and uniqueness results for solutions to~\eqref{MFG}. The next two sections deal with the discretization of the HJB and continuity equations in~\eqref{MFG} separately. Section~\ref{sec:sl_hjb} recalls a standard semi-Lagrangian scheme to approximate the solution to  the HJB equation in~\eqref{MFG}. Several important properties of this scheme are reviewed and a new semiconcavity estimate for the solution to the scheme is provided in Proposition~\ref{prop:bound_hessian_semiconcavity_of_v}. This estimate will play a crucial role in Section~\ref{sec:lg_continuity_equation}, which is devoted to the study of a LG scheme to approximate the continuity equation in \eqref{MFG}. Notice that, in general, this continuity equation is driven by a non-smooth velocity field. We show that the solutions to the LG scheme inherit the equicontinuity and $L^{\mathsf{p}}$-stability of the solution to the original equation  and we establish in Proposition~\ref{prop:stability_w_r_t_mu_n} a convergence result as the discretization steps tend to zero. In Section~\ref{sec:LG_scheme_MFG} we couple the schemes studied in the previous sections to obtain a discretization of~\eqref{MFG}. The existence of a solution to the discretized MFG system is provided in Theorem~\ref{th:solution_discrete_mfg} and the convergence result, valid in arbitrary dimensions, is shown in Theorem~\ref{th:convergence_discrete_scheme}. Finally, Section~\ref{sec:num_results} is devoted to the numerical implementation of the scheme for the MFGs system. Since the LG scheme for the continuity equation involves some integrals depending on the discrete characteristics of the equation, we approximate them by numerical quadrature and by the so-called area-weighting method introduced in~\cite{morton}. The performances of these two approximations are compared in a one-dimensional example with an explicit solution, and the area-weighting method is implemented to approximate the solution to a MFGs in a two-dimensional space. 

{\bf Acknowledgements.} F. J. Silva and A. Zorkot where partially supported by l'Agence Nationale de la Recherche (ANR), project  ANR-22-CE40-0010. 
For the purpose of open access, the authors have applied a CC-BY public copyright licence to any Author Accepted Manuscript (AAM) version arising from this submission.

The three authors were partially supported by KAUST through the subaward agreement ORA-2021-CRG10-4674.6.
\section{Preliminaries}
\label{sec:preliminaries}
\subsection{Notation}
Let $d\in\NN$. In what follows, $\langle\cdot,\cdot\rangle$ and $|\cdot|$ denote the standard scalar product in $\RR^d$ and its induced  norm, respectively. We set $|\cdot|_{\infty}$ for the maximum norm in $\RR^d$ and $B_{\infty}(0,C)$ and $\ov{B}_{\infty}(0,C)$ for the associated open and closed balls, centered at $0$ and of radius $C>0$, respectively. Let $\P(\RR^d)$ be the set of probability measures on $\RR^d$. For every $\nu\in\P(\RR^{d})$ we denote by $\supp(\nu)$ its support. Let $\P_1(\RR^d)=\{\nu\in\P(\RR^d)\,|\,\int_{\RR^d}|x|\dd\nu(x)<\infty\}$, and, for every $\nu_1$, $\nu_2\in\P_1(\RR^d)$, set 
\be
d_{1}(\nu_1,\nu_2)=\underset{\gamma\in\Pi(\nu_1,\nu_2)}{\inf}\int_{\RR^d\times\RR^d}|x-y|\dd \gamma(x,y),
\ee
where $\Pi(\nu_1,\nu_2)$ denotes the set of probabilities measures on $\RR^d\times\RR^d$ with first and second marginals given by $\nu_1$ and $\nu_2$, respectively. By the Kantorovich-Rubinstein theorem (see e.g.~\cite[Section~7.1]{Ambrosiogiglisav}) we have
\be
\label{e:d_1_alternative} 
d_1(\nu_1,\nu_2)=\sup\bigg\{\int_{\RR^d}\varphi(x)\dd(\nu_1-\nu_2)(x)\,\big|\,\varphi\in\text{Lip}_1(\RR^d)\bigg\},
\ee
where $\text{Lip}_1(\RR^d)$ denotes the set of all nonexpansive functions on $\RR^d$. Given $\nu\in\P(\RR^d)$ and a Borel function $\Psi:\RR^d\to\RR^q$ ($q\in\NN$), the {\it push-forward} measure $\Psi\sharp\nu$, defined on the $\sigma$-algebra of Borel sets $\B(\RR^{q})$, is defined by
\be
\Psi\sharp\nu(A)=\nu(\Psi^{-1}(A))\quad\text{for all }A\in \B(\RR^q), 
\ee
or, equivalently (see e.g.~\cite[Theorem 3.6.1]{MR2267655}), for every $\varphi\colon\RR^d \to\RR$ such that $\varphi\circ\Psi$ is integrable with respect to $\nu$, one has 
\be
\label{image_measure}  
\int_{\RR^q}\varphi(x)\dd(\Psi\sharp\nu)(x)=\int_{\RR^{d}}\varphi\big(\Psi(x)\big)\dd\nu(x).
\ee 

\subsection{Assumptions}
\label{sec:assumptions}
Our hypothesis on the data of~\eqref{MFG} are the following:
\begin{enumerate}[{\bf(H1)}] 
\item 
\label{h:h1}
It holds that
\be
\label{def:H}
H(x,p)=\sup_{a\in\RR^d}\big(\langle a,p\rangle-L(x,a)\big)\quad\text{for all }x,\,p\in\RR^d,
\ee
where $L\colon\RR^d\times\RR^d\to\RR$ is of class $C^2$, bounded from below, and, for every $x$, $a\in\RR^{d}$, we have
\begin{align}
L(x,a)&\leq C_{L,1}|a|^2+C_{L,2},
\label{h:L_bounded_above_quadratic_term}\\
|D_{x}L(x,a)|&\leq C_{L,3}(1+|a|^2),
\label{h:L_Lipschitz}\\
C_{L,4}|b|^2&\leq \langle D^{2}_{aa}L(x,a)b,b\rangle\quad\text{for all }b\in\RR^d,
\label{h:L_strong_convexity}\\
\langle D^{2}_{xx}L(x,a)y,y\rangle&\leq C_{L,5}(1+|a|^2)|y|^2\quad\text{for all }y\in\RR^d,
\label{h:L_second_order_derivative_x_bounded_above}
\end{align}
for some constants $C_{L,i}>0$ ($i=1,\hdots,5$).
\vspace{0.2cm}
\item 
\label{h:h2}
The functions $F$ and $G$ are continuous and, for every $x$, $y\in\RR^d$ and
$\nu\in\P_1(\RR^d)$, we have
\begin{align}
|F(x,\nu)|&\leq C_{F,1},
\label{h:F_bounded}\\
|G(x,\nu)|&\leq C_{G,1},
\label{h:G_bounded}\\
|F(x,\nu)-F(y,\nu)|&\leq C_{F,2}|x-y|,
\label{h:F_Lipschitz}\\
|G(x,\nu)-G(y,\nu)|&\leq C_{G,2}|x-y|,
\label{h:G_Lipschitz}\\
F(x+y,\nu)-2F(x,\nu)+F(x-y,\nu)&\leq C_{F,3}|y|^{2}, 
\label{h:F_semiconcave}\\
G(x+y,\nu)-2G(x,\nu)+G(x-y,\nu)&\leq C_{G,3}|y|^{2},
\label{h:G_semiconcave}
\end{align}
for some constants $C_{F,i}>0$, $C_{G,i}>0$ ($i=1,2,3$). 
\vspace{0.2cm}
\item
\label{h:initial_condition}
The initial condition $m_{0}^{*}$ is absolutely continuous with respect to the Lebesgue measure and satisfies:
\begin{enumerate}[{\rm(i)}]
\item
\label{h:initial_condition_i} 
There exists $C^*>0$ such that $\supp(m_{0}^{*})\subset\ov{B}_{\infty}(0,C^*)$.
\item
\label{h:initial_condition_ii}
There exists $\mathsf{p}\in]1,\infty]$ such that the density of $m_{0}^{*}$, still denoted by $m_{0}^{*}$, belongs to $L^{\mathsf{p}}(\RR^d)$.
\end{enumerate}
\end{enumerate}
\begin{remark}
\label{rem:H_and_L} 
Since $L$ is bounded from below, the strong convexity assumption~\eqref{h:L_strong_convexity} on $L(x,\cdot)$, which is uniform with respect to $x\in\RR^d$, and~\eqref{h:L_bounded_above_quadratic_term}, imply the existence of $C_{L,6}>0$ and $C_{L,7}>0$ such that
\be
\label{h:L_bounded_below_quadratic_term}
L(x,a)\geq C_{L,6}|a|^{2}-C_{L,7}\quad\text{for all }x,\,a\in\RR^d.
\ee
It follows from~\eqref{def:H},~\eqref{h:L_bounded_above_quadratic_term}, and~\eqref{h:L_bounded_below_quadratic_term}, that there exist $C_{H,i}>0$ {\rm(}$i=1,2,3,4${\rm)} such that 
\be 
\label{h:quadratic_bounds_H}
C_{H,1}|p|^2-C_{H,2}\leq H(x,p)\leq C_{H,3}|p|^2+C_{H,4}\quad\text{for all }x,\,p\in\RR^d.
\ee
Moreover, by~\eqref{def:H},~\eqref{h:L_bounded_below_quadratic_term}, and Danskin's theorem {\rm(}see e.g.~\cite[Theorem 4.13]{BonSha}{\rm)}, we deduce that $H$ is of class $C^{1}$ and, for every $x$, $p\in\RR^d$, the following equalities hold
\begin{align}
D_{a}L(x,D_{p}H(x,p))&=p, \label{e:derivative_p_H}\\
D_{x}H(x,p)&=-D_{x}L(x,D_{p}H(x,p)).\label{e:derivative_x_H}
\end{align}
Since $D_{p}H(x,p)$ is the unique maximizer of $\sup_{a\in\RR^{d}}\big(\langle a,p\rangle-L(x,a)\big)$,~\eqref{h:L_bounded_above_quadratic_term}, and~\eqref{h:L_bounded_below_quadratic_term}, yield the existence of $C_{H,5}>0$ such that 
\begin{equation}
\label{eq:D_P_H_linear_growth}
|D_{p}H(x,p)|\leq C_{H,5}(1+|p|)\quad\text{for all }x,\,p\in\RR^{d}.
\end{equation}
Finally, since $L$ is of class $C^2$, by~\eqref{h:L_strong_convexity} and the implicit function theorem applied to~\eqref{e:derivative_p_H}, it follows that $D_{p}H$ is of class $C^1$ and hence, by~\eqref{e:derivative_x_H}, we obtain that $H$ is of class $C^2$. 

A typical example of a function $H$ satisfying~{\bf(H\ref{h:h1})} is given by
$H(x,p)=a(x)|p|^{2}+\langle b(x),p\rangle$, where $a\colon\RR^{d}\to\RR$ is of class $C^2$, with bounded first and second order derivatives, there exist $\underline{a}$, $\overline{a}\in]0,\infty[$ such that $\underline{a}\leq a(x)\leq\overline{a}$ for all $x\in\RR^d$, and $b\colon\RR^d\to\RR^{d}$ is bounded,  of class $C^{2}$, with bounded first and second order derivatives. 
\end{remark}
\subsection{The first order mean field games system} Given $\mu\in C([0,T];\P_1(\RR^d))$, consider the HJB equation
\begin{align}
\label{e:HJB_u}
-\partial_{t}v(t,x) +H(x,D_{x}v(t,x)) &=F(x,\mu(t))\quad\text{for }(t,x)\in ]0,T[\times\RR^d,\nonumber\\
v(x,T)&= G(x,\mu(T))\quad\text{for }x\in\RR^d.
\end{align}
It follows from~\cite{BardiCapuzzo96,MR2784834} that~\eqref{e:HJB_u} admits a unique viscosity solution $v[\mu]$ and, for every $t\in[0,T[$, $x\in\RR^d$, and $\alpha\in L^{2}\big([t,T];\RR^{d}\big)$, setting $X^{t,x,\alpha}(\cdot)=x-\int_{t}^{(\cdot)}\alpha(s)\dd s$  and 
\begin{equation}
J^{t,x}[\mu](\alpha)=\int_{t}^{T}\Big(L\big(X^{t,x,\alpha}(s),\alpha(s)\big)+F\big(X^{t,x,\alpha}(s),\mu(s)\big)\Big)\dd s+G(X^{t,x,\alpha}(T),\mu(T)),
\end{equation}
we have
\begin{equation}
v[\mu](t,x)=\inf\Big\{J^{t,x}[\mu](\alpha)\,\big|\,\alpha\in L^{2}\big([t,T];\RR^{d}\big)\Big\}.
\end{equation}

The proof of the following result follows from standard arguments (see e.g.~\cite{CannSinesbook}). However, for the sake of completeness, we provide its proof in the appendix of this work. 
\begin{proposition}
\label{prop:value_function}
Assume {\bf(H\ref{h:h1})-(H\ref{h:h2})} and let $\mu\in C([0,T];\P_{1}(\RR^d))$. Then the following hold:
\begin{enumerate}[{\rm(i)}]
\item
\label{prop:value_function_i}
{\rm[Existence of an optimal control]} For every $(t,x)\in[0,T[\times\RR^d$, there exists $\alpha^{t,x}\in L^{\infty}\big([t,T];\RR^{d}\big)$ such that $v[\mu](t,x)=J^{t,x}[\mu](\alpha^{t,x})$. Moreover, there exists $C_{\text{{\rm b}}}>0$, independent of $(\mu,t,x)$, such that $\|\alpha^{t,x}\|_{L^{\infty}([0,T];\RR^d)}\leq C_{\text{{\rm b}}}$. 
\vspace{0.1cm}
\item
\label{prop:value_function_i_bis}
{\rm[Uniform bound]} We have 
\begin{equation}
|v[\mu](t,x)|\leq C_{\text{{\rm v}}}\quad\text{for all }(t,x)\in[0,T]\times\RR^{d},
\end{equation}
where $C_{\text{{\rm v}}}>0$ is independent of $\mu$.
\item 
\label{prop:value_function_ii}
{\rm[Lipschitz property]} We have 
\be 
\label{eq:value_function_Lipschitz}
\big|v[\mu](t,x)-v[\mu](t,y)\big|\leq C_{\text{{\rm Lip}}}|x-y|\quad\text{for all }t\in [0,T],\,x,\,y\in\RR^d,
\ee
where $C_{\text{{\rm Lip}}}>0$ is independent of $\mu$. 
\item 
\label{prop:value_function_iii}
{\rm[Semi-concavity]} We have 
\be 
\label{eq:value_function_sc}
v[\mu](t,x+y)-2v[\mu](t,x)+v[\mu](t,x-y)\leq C_{\text{{\rm sc}}}|y|^2\quad\text{for all }t\in[0,T],\,x,\,y\in\RR^d,
\ee
where $C_{\text{{\rm sc}}}>0$ is independent of $\mu$. 
\end{enumerate}
\end{proposition}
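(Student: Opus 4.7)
The plan is to establish first a uniform $L^2$-bound on controls that are at least as good as the trivial choice $\alpha=0$, and then read off the four items one by one, deferring the delicate $L^\infty$ bound in~\ref{prop:value_function_i} to the end. Comparing $J^{t,x}[\mu](\alpha)$ to $J^{t,x}[\mu](0)$ and using the upper bound on $L(x,0)$ coming from~\eqref{h:L_bounded_above_quadratic_term}, the quadratic lower bound~\eqref{h:L_bounded_below_quadratic_term} from Remark~\ref{rem:H_and_L}, and the sup-norm bounds~\eqref{h:F_bounded}--\eqref{h:G_bounded} on $F$ and $G$, one obtains a constant $M>0$, independent of $(\mu,t,x)$, such that $\|\alpha\|_{L^2([t,T];\RR^d)}^2\le M$ for every $\alpha$ with $J^{t,x}[\mu](\alpha)\le J^{t,x}[\mu](0)$. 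The same chain of inequalities yields~\ref{prop:value_function_i_bis}, since $-(C_{L,7}+C_{F,1})T-C_{G,1}\le v[\mu](t,x)\le(C_{L,2}+C_{F,1})T+C_{G,1}$.

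For the existence half of~\ref{prop:value_function_i}, I would apply the direct method: a minimizing sequence $\alpha_n$ is bounded in $L^2([t,T];\RR^d)$; a weak limit $\alpha^{t,x}$ produces trajectories $X^{t,x,\alpha_n}$ converging uniformly on $[t,T]$, and the strong convexity~\eqref{h:L_strong_convexity} of $L$ in $a$ combined with the continuity of $F$, $G$ yields weak lower semicontinuity of $J^{t,x}[\mu]$, so that $\alpha^{t,x}$ is a minimizer. To prove~\ref{prop:value_function_ii}, I would take any such minimizer and use it at a shifted initial point: since $X^{t,y,\alpha^{t,x}}(s)=X^{t,x,\alpha^{t,x}}(s)+(y-x)$, the inequality $v[\mu](t,y)-v[\mu](t,x)\le J^{t,y}[\mu](\alpha^{t,x})-J^{t,x}[\mu](\alpha^{t,x})$ combined with~\eqref{h:L_Lipschitz},~\eqref{h:F_Lipschitz},~\eqref{h:G_Lipschitz}, and the preliminary $L^2$-bound, gives a Lipschitz constant of the form $C_{L,3}(T+M)+C_{F,2}T+C_{G,2}$; swapping the roles of $x$ and $y$ completes the argument. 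The proof of~\ref{prop:value_function_iii} is structurally identical: evaluating $\alpha^{t,x}$ at the three initial points $x$ and $x\pm y$ produces trajectories that differ from $X^{t,x,\alpha^{t,x}}$ by $\pm y$, and the quadratic estimate $L(x'+y,a)-2L(x',a)+L(x'-y,a)\le C_{L,5}(1+|a|^2)|y|^2$, which follows from a second-order Taylor expansion and~\eqref{h:L_second_order_derivative_x_bounded_above}, together with~\eqref{h:F_semiconcave}--\eqref{h:G_semiconcave}, yields the desired inequality.

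The main obstacle is the uniform $L^\infty$ bound in~\ref{prop:value_function_i}, which I would attack last by invoking the Pontryagin-type characterization of optimal controls for semi-concave viscosity solutions developed in~\cite{CannSinesbook}. Once~\ref{prop:value_function_ii} and~\ref{prop:value_function_iii} are established, $v[\mu](s,\cdot)$ is $C_{\text{{\rm Lip}}}$-Lipschitz and $C_{\text{{\rm sc}}}$-semi-concave uniformly in $(\mu,s)$, so the super-differential $D^+_xv[\mu](s,\cdot)$ is contained in the closed ball $\ov{B}(0,C_{\text{{\rm Lip}}})$ at every point. The standard necessary conditions for optimality then furnish, along any optimal trajectory $X^{t,x}(\cdot)$, a super-gradient arc $p(\cdot)$ with $p(s)\in D^+_xv[\mu](s,X^{t,x}(s))$ such that $\alpha^{t,x}(s)=D_pH(X^{t,x}(s),p(s))$ for a.e.\ $s\in[t,T]$, and the linear growth estimate~\eqref{eq:D_P_H_linear_growth} then gives $|\alpha^{t,x}(s)|\le C_{H,5}(1+C_{\text{{\rm Lip}}})=:C_{\text{{\rm b}}}$. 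The delicate point is the justification of this representation when $F$ and $G$ are only Lipschitz and semi-concave rather than $C^1$, which relies on the machinery of super-differentials and dual arcs from semi-concave viscosity-solution theory rather than on classical Pontryagin arguments.
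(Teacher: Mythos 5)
Your treatment of the uniform bound, the existence of a minimizer, the Lipschitz estimate, and the semi-concavity estimate coincides with the paper's proof: the same comparison with the zero control yields the a priori bound $\int_t^T|\alpha^{t,x}(s)|^2\dd s\le \tilde C$ with $\tilde C$ independent of $(\mu,t,x)$, and the remaining three items then follow exactly as you describe, by evaluating the cost along the shifted trajectories $X^{t,x\pm y,\alpha}=X^{t,x,\alpha}\pm y$ and invoking \eqref{h:L_Lipschitz}, \eqref{h:L_second_order_derivative_x_bounded_above}, \eqref{h:F_Lipschitz}--\eqref{h:G_semiconcave}. (The paper takes a supremum over the set of controls with $\|\alpha\|^2_{L^2}\le\tilde C$ rather than working with the specific minimizer, but this difference is immaterial.)

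The one genuine problem is the uniform $L^{\infty}$ bound on $\alpha^{t,x}$, which is the only delicate part of the statement and the part the rest of the paper actually consumes (the constant $C_{\text{{\rm b}}}$ defines the control set in the scheme \eqref{eq:SL_scheme}). Your plan is to write $\alpha^{t,x}(s)=D_pH(X(s),p(s))$ with $p(s)\in D_x^{+}v[\mu](s,X(s))$ and conclude via \eqref{eq:D_P_H_linear_growth} and $|p(s)|\le C_{\text{{\rm Lip}}}$. As you acknowledge, the dual-arc construction in \cite{CannSinesbook} solves an adjoint equation that requires $D_xF(\cdot,\mu(s))$ and $D_xG(\cdot,\mu(T))$, which need not exist under {\bf(H\ref{h:h2})}; a nonsmooth maximum principle would only produce an arc solving a differential inclusion, and one would still have to establish the sensitivity relation $p(s)\in D_x^{+}v[\mu](s,X(s))$ in that setting. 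As written, this step is an appeal to machinery that does not directly apply, so the proof is incomplete precisely where it matters. The paper avoids the maximum principle altogether: for $s\in[t,T[$ and $h>0$ small, the dynamic programming inequality with the zero control on $[s,s+h]$, the optimality of $\alpha^{t,x}|_{[s,T]}$, and the coercivity \eqref{h:L_bounded_below_quadratic_term} give
\begin{equation*}
C_{L,6}\int_s^{s+h}|\alpha^{t,x}(r)|^2\dd r\;\le\; C h+v[\mu]\big(s+h,\overline X(s)\big)-v[\mu]\big(s+h,\overline X(s+h)\big)\;\le\; C h+C_{\text{{\rm Lip}}}\int_s^{s+h}|\alpha^{t,x}(r)|\dd r,
\end{equation*}
where $\overline X=X^{t,x,\alpha^{t,x}}$; Young's inequality then yields $\int_s^{s+h}|\alpha^{t,x}(r)|^2\dd r\le C'h$ with $C'$ independent of $(\mu,t,x,s,h)$, and the Lebesgue differentiation theorem gives $|\alpha^{t,x}(s)|\le\sqrt{C'}$ for a.e.\ $s$. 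This argument uses only the already-established Lipschitz bound and is entirely self-contained; I recommend replacing your last paragraph with it.
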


\begin{remark}
\label{rem:alternative_HJB_equation}
Assertion~\eqref{prop:value_function_i} in Proposition~\ref{prop:value_function} implies that, for every $\mu\in C([0,T];\P_{1}(\RR^{d}))$, we have
\be 
\label{eq:alternative_value_function}
v[\mu](t,x)=\inf\Big\{J^{t,x}[\mu](\alpha)\,\big|\,\alpha\in L^{\infty}([0,T];\RR^{d}),\;
\|\alpha\|_{L^{\infty}([0,T];\RR^{d})}\leq C_{\text{{\rm b}}}\Big\}
\ee
for all $(t,x)\in [0,T[\times\RR^{d}$. In particular, $v[\mu]$ is also characterized by the HJB equation 
\begin{align}
\label{e:HJB_u_b}
-\partial_{t}v(t,x) +H_{\text{{\rm b}}}(x,D_{x}v(t,x)) &=F(x,\mu(t))\quad\text{for }(t,x)\in ]0,T[\times\RR^d,\nonumber\\
v(x,T)&= G(x,\mu(T))\quad\text{for }x\in\RR^d,
\end{align}
where 
\begin{equation}
\label{def:H_b}
H_{\text{{\rm b}}}(x,p)=\sup_{a\in\ov{B}(0,C_{\text{{\rm b}}})}\big\{\langle a,p\rangle-L(x,a)\big\}\quad\text{for all }x,\,p\in\RR^d.
\end{equation}
\end{remark}
Consider the set-valued map $D_{x}^{+}v[\mu]\colon[0,T]\times\RR^d\to 2^{\RR^{d}}$ defined by 
\begin{equation*}
\label{def:d_plus_v}
D_{x}^{+}v[\mu](t,x)=\Bigg\{p\in\RR^{d}\,\Big|\,\limsup_{y\to x}\frac{v[\mu](t,y)-v[\mu](t,x)-\langle p,y-x\rangle}{|y-x|}\leq 0\Bigg\}\quad\text{for all }(t,x)\in [0,T]\times\RR^{d}.
\end{equation*}
It follows from Proposition~\ref{prop:value_function}\eqref{prop:value_function_iii} and  \cite[Proposition~3.1.5 and Proposition 3.3.4]{CannSinesbook}  that $D_{x}^{+}v[\mu]$ takes nonempty and closed values and its graph is closed.  In particular, since Proposition~\ref{prop:value_function}\eqref{prop:value_function_iii} and \cite[Theorem~3.3.6]{CannSinesbook}  imply that $D_{x}^{+}v[\mu](t,x)\subset\ov{B}(0,C_{\text{{\rm Lip}}})$ for all $(t,x)\in [0,T]\times\RR^{d}$, by~\cite[Chapter 1, Corollary 1]{MR755330} we have that $D_{x}^{+}v[\mu]$ is upper-semicontinuous, i.e. for every $M\subset \RR^{d}$ closed, $D_{x}^{+}v[\mu]^{-1}(M)$ is closed. Therefore, $D_{x}^{+}v[\mu]$ is a Borel measurable set-valued map and hence admits a Borel measurable selection (see e.g.~\cite[Corollary~14.6]{MR1491362}). Notice that Proposition~\ref{prop:value_function}\eqref{prop:value_function_ii}, Rademacher's theorem, and \cite[Proposition~3.1.5]{CannSinesbook} imply that all the measurable selections of $D_{x}^{+}v[\mu]$ coincide almost everywhere in $[0,T]\times\RR^d$ and hence, hereafter, we will denote likewise by $D_{x}v[\mu]$  any choice among them. 

Let $\mathsf{p}\in]1,\infty[$ be as in~{\bf(H\ref{h:initial_condition})}. We say that $m\in L^{\mathsf{p}}([0,T]\times\RR^{d})$ solves the continuity equation
\begin{align}
\label{limit_continuity_equation_H}
\partial_t m-\diver\left(D_pH(x,D_{x}v[\mu])m\right)&=0\quad
\mbox{in }]0,T[\times\RR^d,\nonumber\\ 
m(0)&=m_{0}^{*}\quad\text{in }\RR^d,
\end{align}
if, for every $\varphi\in C_{0}^{\infty}(\RR^{d})$ and $t\in [0,T]$, we have 
\be
\label{solution_distributional_sense}
\int_{\RR^d}\varphi(x)m(t,x)\dd x=\int_{\RR^d}\varphi(x)m_{0}^{*}(x)\dd x-
\int_{0}^{t}\int_{\RR^d}\Big\langle D_{p}H(x,D_{x}v[\mu](s,x)),D\varphi(x)\Big\rangle m(s,x)\dd x\dd s. 
\ee
\begin{proposition} 
Assume~{\bf(H\ref{h:h1})}-{\bf(H\ref{h:initial_condition})} and let $\mu\in C([0,T];\P_{1}(\RR^{d}))$. Then~\eqref{solution_distributional_sense} admits a solution $m\in C([0,T];\P_1(\RR^d))\cap L^{\mathsf{p}}([0,T]\times\RR^{d})$ and there exists $\widetilde{C}>0$ such that
\begin{equation}
\label{e:bound_lp_continuity_equation}
\|m(t,\cdot)\|_{L^{\mathsf{p}}(\RR^{d})}\leq \widetilde{C}\|m_{0}^{*}\|_{L^{\mathsf{p}}(\RR^{d})}\quad\text{for all }t\in[0,T].
\end{equation}
If, in addition, for every $t\in[0,T]$, the functions $F(\cdot,\mu(t))$ and $G(\cdot,\mu(T))$ are differentiable, then the solution $m$ to~\eqref{limit_continuity_equation_H} is unique. 
\end{proposition}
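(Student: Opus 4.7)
The plan is to construct $m$ as the push-forward of $m_{0}^{*}$ by the flow of optimal trajectories for the control problem associated with $v[\mu]$, establish the $L^{\mathsf{p}}$ estimate via a Jacobian lower bound coming from the semiconcavity of $v[\mu]$, and derive uniqueness under the extra differentiability assumption from regularity of the characteristic velocity field.

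For each $x_{0}\in\RR^{d}$, Proposition~\ref{prop:value_function}\eqref{prop:value_function_i} yields an optimal control $\alpha^{0,x_{0}}\in L^{\infty}([0,T];\RR^{d})$ with $\|\alpha^{0,x_{0}}\|_{\infty}\leq C_{\text{{\rm b}}}$. The semiconcavity of $v[\mu]$ combined with standard optimal-synthesis results (see e.g.~\cite{CannSinesbook}) implies that on a full-measure set $\Omega\subset\RR^{d}$ the optimal trajectory is unique, depends measurably on $x_{0}$, and satisfies, a.e.\ in $t$,
\be
\dot{\Phi}_{t}(x_{0})=-D_{p}H\big(\Phi_{t}(x_{0}),D_{x}v[\mu](t,\Phi_{t}(x_{0}))\big),\qquad \Phi_{0}(x_{0})=x_{0},
\ee
where $\Phi_{t}(x_{0}):=X^{0,x_{0},\alpha^{0,x_{0}}}(t)$. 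Setting $m(t):=\Phi_{t}\sharp m_{0}^{*}$, the uniform bound on $\dot\Phi_{t}$ yields $d_{1}(m(t),m(s))\leq C_{\text{{\rm b}}}|t-s|$, hence $m\in C([0,T];\P_{1}(\RR^{d}))$; testing the push-forward identity $\int\varphi\,dm(t)=\int\varphi\circ\Phi_{t}\,dm_{0}^{*}$ against $\varphi\in C_{0}^{\infty}(\RR^{d})$, differentiating in $t$ along the characteristics, and integrating in time produces~\eqref{solution_distributional_sense}.

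The main technical point is the $L^{\mathsf{p}}$ estimate. Formally differentiating the characteristic equation in $x_{0}$, the Jacobian $J_{t}(x_{0}):=D_{x_{0}}\Phi_{t}(x_{0})$ satisfies $\dot{J}_{t}=-[D_{xp}^{2}H+D_{pp}^{2}H\cdot D_{x}^{2}v[\mu]]J_{t}$, and Jacobi's formula gives
\be
\frac{d}{dt}\log|\det J_{t}(x_{0})|=-\tr\big(D_{xp}^{2}H\big)-\tr\big(D_{pp}^{2}H\cdot D_{x}^{2}v[\mu]\big)
\ee
evaluated at $(\Phi_{t}(x_{0}),D_{x}v[\mu](t,\Phi_{t}(x_{0})))$. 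Since $D_{pp}^{2}H\geq 0$ by convexity of $H$ in $p$ and $D_{x}^{2}v[\mu]\leq C_{\text{{\rm sc}}}I$ distributionally by Proposition~\ref{prop:value_function}\eqref{prop:value_function_iii}, the second trace is bounded above by $C_{\text{{\rm sc}}}\tr(D_{pp}^{2}H)$. Combined with the uniform $C^{2}$ bounds on $H$ on the compact set reachable from $\supp m_{0}^{*}$ with speeds bounded by $C_{\text{{\rm b}}}$, this yields $|\det J_{t}(x_{0})|\geq e^{-CT}$ for a.e.\ $x_{0}$, with $C$ independent of $\mu$. A rigorous derivation proceeds by inf-convolution regularization of $v[\mu]$, which preserves the semiconcavity constant up to an arbitrarily small error, carrying out the estimate in the smooth regime, and passing to the limit using stability of the optimal flow. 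The change-of-variables formula then gives
\be
\|m(t,\cdot)\|_{L^{\mathsf{p}}}^{\mathsf{p}}=\int_{\RR^{d}}\frac{|m_{0}^{*}(x_{0})|^{\mathsf{p}}}{|\det J_{t}(x_{0})|^{\mathsf{p}-1}}\,dx_{0}\leq e^{C(\mathsf{p}-1)T}\|m_{0}^{*}\|_{L^{\mathsf{p}}}^{\mathsf{p}},
\ee
proving~\eqref{e:bound_lp_continuity_equation} (the case $\mathsf{p}=\infty$ follows analogously).

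For uniqueness, if $F(\cdot,\mu(t))$ and $G(\cdot,\mu(T))$ are differentiable, every $(t,x)$ admits a unique optimal control, so the characteristic flow is defined at every $x_{0}$ and the field $x\mapsto D_{p}H(x,D_{x}v[\mu](t,x))$ is continuous. Ambrosio's superposition principle then identifies any $L^{\mathsf{p}}$-bounded solution to~\eqref{solution_distributional_sense} with $\Phi_{t}\sharp m_{0}^{*}$, yielding uniqueness. The hardest step is the rigorous justification of the Jacobian lower bound, where the lack of smoothness of $D_{x}v[\mu]$ forces an approximation argument that must preserve the one-sided second-order bound from semiconcavity throughout.
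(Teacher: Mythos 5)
Your route is genuinely different from the paper's: you build $m$ directly as the push-forward of $m_0^*$ by the optimal flow of the continuous control problem and try to prove the $L^{\mathsf{p}}$ bound by a Jacobian estimate at the continuous level, whereas the paper obtains both the existence and the bound~\eqref{e:bound_lp_continuity_equation} as a by-product of the convergence of the Lagrange--Galerkin scheme (Proposition~\ref{prop:stability_w_r_t_mu_n}), where the velocity field $-D_pH(\cdot,D_xv^{\Delta}[\mu])$ is genuinely smooth thanks to the mollification, the Jacobian/determinant estimate is carried out classically on the discrete flow (Propositions~\ref{prop:bound_hessian_semiconcavity_of_v} and~\ref{prop:stability}), and the $L^{\mathsf{p}}$ bound survives the weak limit. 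Your uniqueness argument (superposition principle plus a.e.\ uniqueness of optimal trajectories under differentiability of $F(\cdot,\mu(t))$ and $G(\cdot,\mu(T))$) coincides with the paper's.

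There are, however, two genuine gaps in the existence part. First, the construction of the flow itself: under {\bf(H\ref{h:h1})}--{\bf(H\ref{h:initial_condition})} alone, $F(\cdot,\mu(t))$ and $G(\cdot,\mu(T))$ are only Lipschitz and semiconcave, so the Pontryagin adjoint system is a differential inclusion and you cannot conclude that optimal trajectories are unique for a.e.\ $x_0$, nor that the feedback relation $\dot{\Phi}_t(x_0)=-D_pH(\Phi_t(x_0),D_xv[\mu](t,\Phi_t(x_0)))$ holds: that identity requires $v[\mu](t,\cdot)$ to be differentiable at $\Phi_t(x_0)$, which you can only guarantee for $m(t)$-a.e.\ point once you already know $m(t)\ll\mathcal{L}^d$ --- precisely the conclusion of the Jacobian bound you are in the middle of proving. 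This circularity must be broken, and your sketch does not do it. Second, and more seriously, the Jacobian lower bound is asserted via ``inf-convolution regularization \dots and passing to the limit using stability of the optimal flow,'' but this is exactly where the difficulty sits: $D_x^2v[\mu]$ is only a matrix-valued measure, the inf-convolution of $v[\mu]$ is not the value function of any control problem (so it has no ``optimal flow'' to be stable about), and stability of flows of merely one-sided Lipschitz fields under regularization is a delicate Filippov-type issue, not a routine limit. The paper's detour through the discrete scheme exists precisely to avoid this: there the Hessian bound of Proposition~\ref{prop:bound_hessian_semiconcavity_of_v} applies to a $C^\infty$ function and the determinant expansion~\eqref{e:expansion_inverse_of_determinant_with_bound} is fully rigorous. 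Your approach can be made to work (it is essentially the argument of the reference cited in Proposition~\ref{prop:stability_cas_continu_I}), but as written the central estimate is not proved.
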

\begin{proof}
The first assertion in the statement follows from Proposition~\ref{prop:stability_w_r_t_mu_n} below, while the second one follows by arguing as in the proof of~\cite[Lemma~1.10]{MR4214774}. The crucial steps in the latter are the use of the superposition principle in~\cite{MR2096794} for solutions to \eqref{limit_continuity_equation_H} and the fact that, under the differentiability assumptions over $F(\cdot,\mu(t))$ and $G	(\cdot,\mu(T))$, the optimal control problem $\inf\Big\{J^{0,x}[\mu](\alpha)\,\big|\,\alpha\in L^{2}\big([0,T];\RR^{d}\big)\Big\}$ admits a unique solution for almost every $x\in\RR^{d}$. 
\end{proof}

Finally, we say that $(v^{*},m^{*})$, with $m^*\in C([0,T];\P_1(\RR^d))\cap L^{\mathsf{p}}([0,T]\times\RR^{d})$, solves~\eqref{MFG} if $v^*=v[m^*]$ and $m^*$ solves~\eqref{limit_continuity_equation_H} with $\mu=m^*$.

\begin{proposition}
\label{prop:MFG_existence_solution}
Assume~{\bf(H\ref{h:h1})}-{\bf(H\ref{h:initial_condition})}. Then system~\eqref{MFG} admits a solution $(v^{*},m^{*})$. 
\end{proposition}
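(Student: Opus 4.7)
My plan is to prove existence of a solution to \eqref{MFG} by a Schauder-type fixed point argument on the space $C([0,T];\P_{1}(\RR^{d}))$, in the spirit of the existence proofs in~\cite{CannSinesbook,MR3195844}. First I would define the solution map $\Phi\colon C([0,T];\P_{1}(\RR^{d}))\to C([0,T];\P_{1}(\RR^{d}))$ by $\Phi(\mu)=m$, where $v[\mu]$ is the unique viscosity solution to~\eqref{e:HJB_u} and $m$ is the solution to the continuity equation~\eqref{limit_continuity_equation_H} whose existence is guaranteed by the preceding proposition. A fixed point of $\Phi$ is, by definition, a solution to~\eqref{MFG}.

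Next, I would construct a nonempty, convex, compact subset $\cK\subset C([0,T];\P_{1}(\RR^{d}))$ that is stable under $\Phi$. Combining Proposition~\ref{prop:value_function}\eqref{prop:value_function_i} (uniform bound $C_{\text{b}}$ on optimal controls) with~\eqref{eq:D_P_H_linear_growth} and Proposition~\ref{prop:value_function}\eqref{prop:value_function_ii} (Lipschitz constant $C_{\text{Lip}}$ of $v[\mu]$), the velocity field $D_{p}H(x,D_{x}v[\mu])$ is uniformly essentially bounded by some constant $M>0$ independent of $\mu$. Hence, using~\eqref{h:initial_condition_i} and testing~\eqref{solution_distributional_sense} against smooth cut-off functions, the support of $\Phi(\mu)(t,\cdot)$ remains in $\ov{B}_{\infty}(0,C^{*}+Mt)\subset\ov{B}_{\infty}(0,C^{*}+MT)=:K$ for every $t\in[0,T]$; moreover, using~\eqref{solution_distributional_sense} with $\varphi\in\text{Lip}_{1}(\RR^{d})$ and the dual characterization~\eqref{e:d_1_alternative}, one gets $d_{1}(\Phi(\mu)(t),\Phi(\mu)(s))\leq M|t-s|$. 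I would therefore take
\begin{equation*}
\cK:=\Bigl\{\mu\in C([0,T];\P_{1}(\RR^{d}))\,\Big|\,\supp(\mu(t))\subset K\text{ for all }t\in[0,T],\;d_{1}(\mu(t),\mu(s))\leq M|t-s|\Bigr\}.
\end{equation*}
By the Kantorovich--Rubinstein duality and the compactness of $K$, $(\P(K),d_{1})$ is a compact metric space, and then $\cK$ is compact by the Arzelà--Ascoli theorem and convex by convexity of $d_{1}$ in each argument. The construction above shows $\Phi(\cK)\subset\cK$.

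The main obstacle is the continuity of $\Phi$, because the velocity field $D_{p}H(x,D_{x}v[\mu])$ depends on the gradient of $v[\mu]$, which is only defined almost everywhere. This is exactly the content of Proposition~\ref{prop:stability_w_r_t_mu_n} (announced in the paper): given $\mu_{n}\to\mu$ in $C([0,T];\P_{1}(\RR^{d}))$, one first obtains stability of viscosity solutions $v[\mu_{n}]\to v[\mu]$ locally uniformly (from Proposition~\ref{prop:value_function} and the standard half-relaxed limits argument), then uses the semiconcavity estimate~\eqref{eq:value_function_sc} to upgrade this into almost-everywhere convergence of $D_{x}v[\mu_{n}]$ to $D_{x}v[\mu]$ (following~\cite[Proposition 3.3.4, Theorem 3.3.6]{CannSinesbook}); the continuity and local boundedness of $D_{p}H$ then yield $D_{p}H(\cdot,D_{x}v[\mu_{n}])\to D_{p}H(\cdot,D_{x}v[\mu])$ almost everywhere, and the $L^{\mathsf{p}}$-bound~\eqref{e:bound_lp_continuity_equation} on the corresponding solutions allows one to pass to the limit in the distributional formulation~\eqref{solution_distributional_sense}. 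This yields $\Phi(\mu_{n})\to\Phi(\mu)$ in $C([0,T];\P_{1}(\RR^{d}))$.

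Having verified that $\Phi\colon\cK\to\cK$ is continuous on a convex, compact subset of the Banach space $C([0,T];\P_{1}(\RR^{d}))$ (embedded in an appropriate space of signed measures or treated directly via metric fixed point theorems), Schauder's fixed point theorem produces $m^{*}\in\cK$ with $\Phi(m^{*})=m^{*}$. Setting $v^{*}=v[m^{*}]$ gives a solution $(v^{*},m^{*})$ to~\eqref{MFG}, with $m^{*}\in L^{\mathsf{p}}([0,T]\times\RR^{d})$ by~\eqref{e:bound_lp_continuity_equation}, as required.
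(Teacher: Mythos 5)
Your route is genuinely different from the paper's. The paper obtains existence as a by-product of its numerical analysis: the discrete system~\eqref{MFGfully} is solvable by Brouwer's theorem in finite dimensions (Theorem~\ref{th:solution_discrete_mfg}), and Theorem~\ref{th:convergence_discrete_scheme} shows that its solutions converge, up to a subsequence, to a solution of~\eqref{MFG}. You instead run the classical Schauder argument directly at the PDE level, in the spirit of~\cite[Section~1.3.4]{MR4214774}, which the paper itself cites as an alternative. Most of your outline is sound: the uniform bounds from Proposition~\ref{prop:value_function}, the construction of the convex compact set $\cK$, the stability of viscosity solutions under $\mu_{n}\to\mu$, and the upgrade from local uniform convergence of $v[\mu_{n}]$ to a.e.\ convergence of $D_{x}v[\mu_{n}]$ via the uniform semiconcavity~\eqref{eq:value_function_sc} are all standard and correct.

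There is, however, one genuine gap: the map $\Phi$ is not well defined as a single-valued map, and an arbitrary selection of it need not be continuous. Under {\bf(H\ref{h:h1})}--{\bf(H\ref{h:initial_condition})} alone, the velocity field $D_{p}H(x,D_{x}v[\mu])$ is only bounded and one-sided Lipschitz (it is built from an a.e.\ selection of $D_{x}^{+}v[\mu]$), and the continuity equation~\eqref{limit_continuity_equation_H} may admit several solutions; the paper's own existence/uniqueness statement for~\eqref{limit_continuity_equation_H} in Section~\ref{sec:preliminaries} guarantees uniqueness only under the \emph{additional} hypothesis that $F(\cdot,\mu(t))$ and $G(\cdot,\mu(T))$ are differentiable, which is not part of {\bf(H\ref{h:h2})}. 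For the same reason, your continuity step only shows that every cluster point of $(\Phi(\mu_{n}))_{n}$ solves the limit continuity equation; without uniqueness you cannot identify that cluster point with the particular solution you named $\Phi(\mu)$, so the conclusion $\Phi(\mu_{n})\to\Phi(\mu)$ does not follow.

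Two standard repairs are available. Either replace $\Phi$ by the set-valued map sending $\mu$ to the set of all solutions of~\eqref{limit_continuity_equation_H} lying in $\cK$ and satisfying~\eqref{e:bound_lp_continuity_equation}; these values are nonempty, convex (the equation is linear in $m$) and compact, and the graph is closed by exactly the limit passage you describe, so the Kakutani--Fan--Glicksberg theorem applies in place of Schauder. Or single out the superposition solution given by pushing $m_{0}^{*}$ forward along optimal trajectories, using that $m_{0}^{*}$ is absolutely continuous and $v[\mu]$ is semiconcave, so the optimal trajectory is unique for a.e.\ starting point, as in~\cite[Section~1.3.4]{MR4214774}; but then the continuity of $\mu\mapsto m$ must be re-proved at the level of the optimal flows, which is where the real work lies. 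Either fix closes the argument, but as written the fixed-point step does not.
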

\begin{proof} A proof of this result, under slightly different assumptions, can be found, for instance, in~\cite[Section~1.3.4]{MR4214774}. In the present context, the result follows from Theorem~\ref{th:convergence_discrete_scheme} below. 
\end{proof}

A uniqueness result for solutions to~\eqref{MFG} can be shown under additional assumptions on the coupling terms $F$ and $G$. A sufficient condition is the so-called Lasry-Lions monotonicity condition which states that, for $h=F,\,G$, it holds 
\be
\label{h:monotonicity_condition}
\int_{\RR^{d}}\big(h(x,m_1)-h(x,m_2)\big)\dd\big(m_1-m_2)(x)\geq 0\quad\text{for all }m_1,\,m_2\in\P_1(\RR^d). 
\ee
\begin{proposition}
\label{prop:uniqueness_mfg_monotone_couplings}
 Assume~{\bf(H\ref{h:h1})}-{\bf(H\ref{h:initial_condition})}, the monotonicity condition~\eqref{h:monotonicity_condition} and that, for all $\nu\in\P_{1}(\RR^{d})$, the functions $F(\cdot,\nu)$ and $G(\cdot,\nu)$ are differentiable. Then system~\eqref{MFG} admits a unique solution. 
\end{proposition}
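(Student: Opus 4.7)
The plan is to adapt the classical Lasry-Lions monotonicity argument. Let $(v_1^*, m_1^*)$ and $(v_2^*, m_2^*)$ be two solutions of~\eqref{MFG}, set $w := v_1^* - v_2^*$, and write $V_i := D_x v_i^*$ for $i=1,2$. First I would subtract the two HJB equations, test the resulting equation against $m_1^* - m_2^*$ over $[0,T]\times\RR^d$, and combine the time integration by parts with the weak formulations of the two continuity equations for $m_1^*$ and $m_2^*$. Since $m_1^*(0) = m_2^*(0) = m_0^*$, the initial boundary term vanishes. After rearranging, the goal is to derive the identity
\begin{equation*}
\mathcal{B}_{1}+\mathcal{B}_{2}+\mathcal{G}+\mathcal{F}=0,
\end{equation*}
where, with the shorthand $j=3-i$,
\begin{align*}
\mathcal{B}_i &:= \int_0^T\!\!\int_{\RR^d} m_i^*(t,x)\Bigl[H(x,V_j)-H(x,V_i)-\bigl\langle D_pH(x,V_i),\,V_j-V_i\bigr\rangle\Bigr]\dd x\,\dd t,\\
\mathcal{G} &:= \int_{\RR^d}\bigl[G(x,m_1^*(T))-G(x,m_2^*(T))\bigr]\bigl(m_1^*-m_2^*\bigr)(T,x)\dd x,\\
\mathcal{F} &:= \int_0^T\!\!\int_{\RR^d}\bigl[F(x,m_1^*(t))-F(x,m_2^*(t))\bigr]\bigl(m_1^*-m_2^*\bigr)(t,x)\dd x\,\dd t.
\end{align*}

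The next step is to observe that each of the four terms is nonnegative: $\mathcal{B}_i\ge 0$ is the standard Bregman inequality for the convex function $H(x,\cdot)$, combined with $m_i^*\ge 0$, while $\mathcal{G}\ge 0$ and $\mathcal{F}\ge 0$ are the Lasry-Lions monotonicity assumption~\eqref{h:monotonicity_condition} for $G$ and $F$. Hence each of them must vanish. The strict convexity of $H(x,\cdot)$, a consequence of the strong convexity~\eqref{h:L_strong_convexity} of $L(x,\cdot)$ and the Legendre-Fenchel relation~\eqref{def:H}, then forces $V_1(t,x)=V_2(t,x)$ for $m_i^*(t,x)\dd x\,\dd t$-almost every $(t,x)$, $i=1,2$. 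In particular, the drift $D_pH(x,V_1)$ coincides with $D_pH(x,V_2)$ wherever either $m_1^*$ or $m_2^*$ is positive, so both $m_1^*$ and $m_2^*$ belong to $C([0,T];\P_1(\RR^d))\cap L^{\mathsf{p}}([0,T]\times\RR^d)$ and are weak solutions of the common continuity equation
\begin{equation*}
\partial_t m-\diver\bigl(D_pH(x,D_xv_2^*)m\bigr)=0\quad\text{in }]0,T[\times\RR^d,\qquad m(0)=m_0^*.
\end{equation*}
The differentiability of $F(\cdot,\nu)$ and $G(\cdot,\nu)$ for every $\nu\in\P_1(\RR^d)$ triggers the uniqueness statement of the preceding proposition, applied with $\mu=m_2^*$, yielding $m_1^*=m_2^*$. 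Then $v_1^*=v[m_1^*]=v[m_2^*]=v_2^*$ by uniqueness of the viscosity solution to~\eqref{e:HJB_u}.

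The main obstacle is the rigorous derivation of the key identity: the functions $v_i^*$ are only Lipschitz and semiconcave in $x$ and solve~\eqref{e:HJB_u} in the viscosity sense, while $m_i^*$ satisfy their continuity equations only weakly in $L^{\mathsf{p}}$. The standard remedy is to mollify $v_i^*$ in the spatial variable, carry out the time and space integrations by parts at the smoothed level, and pass to the limit using the uniform Lipschitz and semiconcavity bounds from Proposition~\ref{prop:value_function} together with the compact support and $L^{\mathsf{p}}$ bound on $m_i^*$; the semiconcavity is precisely the property that controls the commutators between mollification and the Hamiltonian nonlinearity and guarantees the convergence of the Bregman contributions $\mathcal{B}_i$. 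A subsidiary point is the strict convexity of $H(x,\cdot)$, which is not stated explicitly in~\textbf{(H\ref{h:h1})} but follows from the strong convexity of $L(x,\cdot)$ and the first-order optimality relation~\eqref{e:derivative_p_H} characterizing $D_pH$ as the unique maximizer in~\eqref{def:H}.
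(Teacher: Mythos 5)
Your argument is correct and is essentially the proof the paper relies on: the paper's own proof simply invokes Proposition~\ref{prop:MFG_existence_solution} for existence and defers uniqueness to the Lasry--Lions monotonicity argument of \cite[Theorem~1.8]{MR4214774}, which is exactly the duality identity, the Bregman/monotonicity decomposition, and the reduction to uniqueness of the continuity equation (via the differentiability of $F(\cdot,\nu)$ and $G(\cdot,\nu)$) that you spell out. The only point to add is the existence part of the statement, which you should record by citing Proposition~\ref{prop:MFG_existence_solution}.
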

\begin{proof}
The existence of a solution  to~\eqref{MFG} follows from Proposition~\ref{prop:MFG_existence_solution} while, under~\eqref{h:monotonicity_condition} and the differentiability assumptions on $F(\cdot,\nu)$ and $G(\cdot,\nu)$, the proof of the uniqueness of the solution follows by arguing as in the proof of~\cite[Theorem~1.8]{MR4214774}.
\end{proof}
\section{A semi-Lagrangian scheme for the HJB equation}
\label{sec:sl_hjb}
Let $\mu\in C([0,T];\P_{1}(\RR^{d}))$. In this section we recall a standard semi-Lagrangian scheme to approximate the viscosity solution $v[\mu]$ to~\eqref{e:HJB_u}. Most of the results for the semi-Lagrangian scheme that will be needed in the remainder of the article follow similarly to those in the monograph~\cite{falconeferretilibro} and the contributions~\cite{MR3148086,CS15,Chowdhury_et_al_2022}. The principal differences come from our assumptions on $L$ in~{\bf(H\ref{h:h1})}, which allow us to consider cost functionals not covered in these references (see e.g. the example in the last paragraph of Remark~\ref{rem:H_and_L}). Therefore, we confine ourselves to explain the main changes in the proofs of the aforementioned properties. On the other hand, the estimate in Proposition~\ref{prop:bound_hessian_semiconcavity_of_v} below seems to be new and will play a key role later in this article. 

In order to define the scheme, let $N\in\NN^*$ be the number of time steps, let $\Delta t=T/N$ be the time step, let $\I_{\Delta t}=\{0,\hdots,N\}$,  let $\I_{\Delta t}^{*}=\I_{\Delta t}\setminus\{N\}$, let
$t_{k}=k\Delta t$ for all $k\in\I_{\Delta t}$, and set $\G_{\Delta t}=\{t_{k}\,|\,k\in\I_{\Delta t}\}$.  
Given a space step $\Delta x>0$ and $i=(i_{1},\hdots,i_{d})\in\ZZ^{d}$, define $\beta_{i}^{1}\colon\RR^{d}\to\RR$ as 
\begin{equation}
\label{def:beta_i_1}
\beta_{i}^{1}(z)=\prod_{l=1}^{d}\widehat{\beta}\Big(\frac{z_{l}}{\Dx}-i_{l}\Big)\quad\text{for all }z=(z_{1},\hdots,z_{d})\in\RR^{d},
\end{equation}
where 
\begin{equation}
\label{def:beta_i_ref}
\widehat{\beta}(\xi)=\max\{0,1-|\xi|\}\quad\text{for all }\xi\in\RR.
\end{equation}
Notice that $\beta_{i}^{1}\geq 0$, $\sum_{i\in\ZZ^{d}}\beta_{i}^{1}(x)=1$ for all $x\in\RR^{d}$ and, setting $x_{i}=i\Delta x$, we have $\beta_{i}^{1}(x_j)=1$, if $i=j$, and $\beta_{i}^{1}(x_j)=0$, otherwise. Let $\G_{\Delta x}=\{i \Delta x\,|\,i\in\ZZ^{d}\}$ be the uniform grid and, given $\phi\colon\G_{\Delta x}\to\RR$, define its interpolate as
$$
I^{1}[\phi](x)=\sum_{i\in\ZZ^{d}}\beta_{i}^{1}(x)\phi_{i}\quad\text{for all }x\in\RR^{d},
$$
where, for notational simplicity, we have set $\phi_{i}=\phi(x_{i})$. For every $\varphi\colon\RR^{d}\to\RR$ denote by $\varphi|_{\G_{\Delta x}}$ its restriction to $\G_{\Delta x}$. If $\varphi$ is of class $C^{2}$ and has bounded second order derivatives, it follows from~\cite[Remark~3.4.2]{quarteronivalli94} that 
\be
\label{interpolation_of_phi}
\|\varphi(x)-I[\varphi|_{\G_{\Delta x}}](x)\|_{\infty}\leq C_{\varphi}(\Dx)^{2}, 
\ee
where $C_{\varphi}>0$ depends only on $\varphi$. 

We consider the following fully-discrete semi-Lagrangian scheme: find $\{v_{k}\colon\G_{\Delta x}\to\RR\,|\,k\in\I_{\Delta t}\}$ such that 
\begin{align}
\label{SL_HJB}
v_{k,i}&=\SS_{k,i}^{\text{{\rm fd}}}[\mu](v_{k+1})\quad\text{for all }k\in\I_{\Dt}^*,\,i\in\ZZ^{d},\nonumber\\ 
v_{N,i}&=G(x_i,\mu(T))\quad\text{for all }i\in\ZZ^{d},
\end{align}
where, for every $\phi\colon\G_{\Delta x}\to\RR$, bounded, $k\in\I_{\Delta t}$, and $i\in\ZZ^{d}$,  
\be
\label{eq:SL_scheme}
\SS_{k,i}^{\text{{\rm fd}}}[\mu](\phi)=\inf_{a\in\ov{B}(0,C_{\text{{\rm b}}})}\left[\Delta t L(x_i,a)+I^{1}[\phi](x_{i}-\Delta t a) \right]+\Delta t F(x_i,\mu(t_k)).
\ee
Notice that, being explicit, scheme~\eqref{SL_HJB} admits a unique solution. By definition, $\SS^{\text{{\rm fd}}}[\mu]$ is {\it monotone}, i.e. for every $\phi^{1}$, $\phi^2\colon\G_{\Delta x}\to\RR$, bounded, with $\phi^{1}_{i}\leq\phi^{2}_{i}$ for all $i\in\ZZ^{d}$, we have that 
\be 
\label{eq:SL_monotone}
\SS_{k,i}^{\text{{\rm fd}}}[\mu](\phi^{1})\leq\SS_{k,i}^{\text{{\rm fd}}}[\mu](\phi^{2})
\quad\text{for all }k\in\I_{\Delta t}^{*},\,i\in\ZZ^{d}.
\ee
Moreover, using {\bf(H\ref{h:h1})} and {\bf(H\ref{h:h2})}, standard arguments (see e.g.~\cite[Section 5.2.3]{falconeferretilibro}) yield the following {\it consistency} property for $\SS^{\text{{\rm fd}}}[\mu]$: let $(\mu_{n})_{n\in\NN}\subset C([0,T];\P_{1}(\RR^{d}))$, $\mu\in C([0,T];\P_{1}(\RR^{d}))$, $\big((\Delta t_n,\Delta x_n)\big)_{n\in\NN}\subset ]0,\infty[^{2}$, $\big((t_{k_{n}},x_{i_{n}})\big)_{n\in\NN}\subset \G_{\Delta t_n}\times\G_{\Delta x_n}$, and $(t,x)\in ]0,T[\times\RR^{d}$ such that, as $n\to\infty$, $\mu_{n}\to\mu$, $(\Delta t_{n},\Delta x_{n})\to 0$, $(\Delta x_{n})^{2}/\Delta t_{n}\to 0$, and $(t_{k_{n}},x_{i_{n}})\to (t,x)$. Then, recalling the definition of $H_{\text{{\rm b}}}$ in~\eqref{def:H_b}, for every $\varphi\colon[0,T]\times\RR^{d}\to\RR$ of class $C^{1}$, with bounded derivatives, we have  
\be
\label{eq:consistency_property}
\lim_{n\to \infty}\frac{1}{\Delta t_n}\Big(\varphi(x_{i_n},t_{k_n})-\SS_{k_{n},i_{n}}^{\text{{\rm fd}}}[\mu_{n}]\big(\varphi(t_{k_{n}+1},\cdot)|_{\G_{\Delta x_n}}\big)\Big)=-\partial_{t}\varphi(x,t)+H_{\text{{\rm b}}}(x,D_{x}\varphi(t,x))-F(x,\mu(t)).
\ee

Given $(\Delta t,\Delta x)\in ]0,\infty[^{2}$, let us set 
\begin{equation}
v^{\Delta t,\Delta x}[\mu](t_{k},x)=I^{1}[v_{k}](x)\quad\text{for all }k\in\I_{\Delta t},\,x\in\RR^{d},
\end{equation}
where, for every $k\in\I_{\Delta t}$, $v_{k}\colon\G_{\Delta x}\to\RR$ is computed with~\eqref{eq:SL_scheme}. We extend this definition to $[0,T]\times\RR^{d}$, by setting
\begin{equation}
\label{def:v_delta_t_delta_x}
v^{\Delta t,\Delta x}[\mu](t,x)=v^{\Delta t,\Delta x}[\mu](t_{k},x)\quad\text{if }t\in[t_{k},t_{k+1}[,\,k\in\I_{\Delta t}^{*}. 
\end{equation}
The following result provides properties for $v^{\Delta t,\Delta x}[\mu]$ that are analogous to those in Proposition~\ref{prop:value_function}\eqref{prop:value_function_i_bis}-\eqref{prop:value_function_iii} for $v[\mu]$.
\begin{proposition}
\label{prop:value_function_discrete}
Assume {\bf(H\ref{h:h1})-(H\ref{h:h2})}, let $\mu\in C([0,T];\P_{1}(\RR^d))$, and let $(\Delta t,\Delta x)\in ]0,\infty[^{2}$. Then the following hold:
\begin{enumerate}[{\rm(i)}]
\item
\label{prop:value_function_discrete_i}
{\rm[Stability]} We have 
\begin{equation}
|v^{\Delta x,\Delta t}[\mu](t,x)|\leq \widetilde{C}_{\text{{\rm v}}}\quad\text{for all }(t,x)\in [0,T]\times\RR^{d}, 
\end{equation}
where $\widetilde{C}_{\text{{\rm v}}}>0$ is independent of $(\mu,\Delta t,\Delta x)$.
\vspace{0.1cm} 
\item 
\label{prop:value_function_discrete_ii}
{\rm[Lipschitz property]} We have 
\be 
\label{eq:value_function_discrete_Lipschitz}
\big|v^{\Delta t,\Delta x}[\mu](t,x)-v^{\Delta t,\Delta x}[\mu](t,y)\big|\leq \widetilde{C}_{\text{{\rm Lip}}}|x-y|\quad\text{for all }t\in[0,T],\,x,\,y\in\RR^d,
\ee
where $\widetilde{C}_{\text{{\rm Lip}}}>0$ is independent of
$(\mu,\Delta t,\Delta x)$. 
\vspace{0.1cm} 
\item
\label{prop:value_function_discrete_iii}
{\rm[Discrete semi-concavity]} We have 
\begin{multline}
\label{eq:value_function_discrete_sc}
v^{\Delta t,\Delta x}[\mu](t,x+x_{i})-2v ^{\Delta t,\Delta x}[\mu](t,x)+v^{\Delta t,\Delta x}[\mu](t,x-x_{i})\\
\leq\widetilde{C}_{\text{{\rm sc}}}|x_{i}|^2\quad\text{for all }t\in[0,T],\,x\in\RR^{d},\,i\in\ZZ^d,
\end{multline}
where $\widetilde{C}_{\text{{\rm sc}}}>0$ is independent of
$(\mu,\Delta t,\Delta x)$. 
\end{enumerate}
\end{proposition}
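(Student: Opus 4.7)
The plan is to prove each of the three assertions by backward induction on $k\in\I_{\Delta t}$, establishing the bound first for the grid values $v_k$ defined by~\eqref{SL_HJB}, and then transferring it to $v^{\Delta t,\Delta x}[\mu]$ via properties of the multilinear interpolation $I^1$. In each case the base step $k=N$ is furnished directly by the assumptions on $G$ in~{\bf(H\ref{h:h2})}, namely the uniform bound~\eqref{h:G_bounded}, the Lipschitz estimate~\eqref{h:G_Lipschitz}, and the semi-concavity~\eqref{h:G_semiconcave}. The inductive step uses the explicit formula~\eqref{eq:SL_scheme} together with the corresponding assumption on $L$ and $F$, and, whenever needed, the monotonicity of $\SS^{\text{{\rm fd}}}$ recalled in~\eqref{eq:SL_monotone}.

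For \emph{stability}, taking $a=0$ in~\eqref{eq:SL_scheme} yields $v_{k,i}\leq\Delta t(C_{L,2}+C_{F,1})+\|v_{k+1}\|_\infty$, while the lower bound $L\geq -C_{L,7}$ from~\eqref{h:L_bounded_below_quadratic_term} combined with $|F|\leq C_{F,1}$ gives $v_{k,i}\geq -\Delta t(C_{L,7}+C_{F,1})-\|v_{k+1}\|_\infty$; iterating from $\|v_N\|_\infty\leq C_{G,1}$ produces a bound independent of $(\mu,\Delta t,\Delta x)$ that transfers to $v^{\Delta t,\Delta x}[\mu]$ because $I^1[v_k]$ is a convex combination of grid values. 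For the \emph{Lipschitz} property, I would show inductively that $|v_{k,i}-v_{k,j}|\leq\Lambda_k|x_i-x_j|$ with $\Lambda_N=C_{G,2}$ and $\Lambda_k=\Lambda_{k+1}+\Delta t(C_{L,3}(1+C_{\text{{\rm b}}}^2)+C_{F,2})$: fixing $a\in\ov{B}(0,C_{\text{{\rm b}}})$ and subtracting~\eqref{eq:SL_scheme} at $x_i$ and $x_j$ with the same $a$, the $L$-difference is controlled by~\eqref{h:L_Lipschitz}, the $F$-difference by~\eqref{h:F_Lipschitz}, and the interpolated difference by the induction hypothesis---which passes through $I^1$ because on each cube the partial derivatives of $I^1[v_{k+1}]$ are convex combinations of finite differences of $v_{k+1}$. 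Since $\Lambda_k\leq C_{G,2}+T(C_{L,3}(1+C_{\text{{\rm b}}}^2)+C_{F,2})$, the resulting constant is uniform, and the interpolation incurs at most a dimensional factor when passing from the grid Lipschitz bound to the Euclidean Lipschitz bound on $\RR^d$.

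For the \emph{discrete semi-concavity}, the key algebraic observation is that multilinear interpolation preserves discrete semi-concavity along grid displacements exactly: the product form~\eqref{def:beta_i_1} gives $\beta_p^1(\cdot+x_j)=\beta_{p-j}^1(\cdot)$ for every $j\in\ZZ^d$, so for any bounded grid function $\phi$ and any $y\in\RR^d$,
\begin{equation*}
I^1[\phi](y+x_j)-2I^1[\phi](y)+I^1[\phi](y-x_j)=\sum_{q\in\ZZ^d}\beta_q^1(y)\big(\phi_{q+j}-2\phi_q+\phi_{q-j}\big),
\end{equation*}
and, since $\beta_q^1\geq 0$ and $\sum_q\beta_q^1\equiv 1$, an upper bound $\phi_{q+j}-2\phi_q+\phi_{q-j}\leq C|x_j|^2$ uniform in $q$ is transferred verbatim to the interpolate. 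Using this, I would show inductively on the grid that $v_{k,i+j}-2v_{k,i}+v_{k,i-j}\leq \Gamma_k|x_j|^2$ with $\Gamma_N=C_{G,3}$ and $\Gamma_k=\Gamma_{k+1}+\Delta t(C_{L,5}(1+C_{\text{{\rm b}}}^2)+C_{F,3})$: picking a minimizer $a^*=a^*(k,i)$ in~\eqref{eq:SL_scheme}, testing $v_{k,i\pm j}$ with the same $a^*$ yields upper bounds, and the second differences of $L(\cdot,a^*)$, $F(\cdot,\mu(t_k))$, and $I^1[v_{k+1}](\cdot-\Delta t a^*)$ are controlled respectively by~\eqref{h:L_second_order_derivative_x_bounded_above}, \eqref{h:F_semiconcave}, and the identity above. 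Iterating gives $\Gamma_k\leq C_{G,3}+T(C_{L,5}(1+C_{\text{{\rm b}}}^2)+C_{F,3})$; the same translation identity extends the bound from grid points to arbitrary $x\in\RR^d$, and the piecewise-constant-in-time prescription~\eqref{def:v_delta_t_delta_x} makes all three properties trivially inherited by the extension to $[0,T]$.

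The main delicate point is to avoid any $\Delta x$- or $\Delta t$-dependent factor when transferring estimates through $I^1$; the critical step is precisely the exact preservation of discrete semi-concavity by the tensor-product tent interpolation via the translation identity above. A naive Taylor-type interpolation error of order $\Delta x^2$, combined with the fact that for $x_j\in\G_{\Delta x}\setminus\{0\}$ one has $|x_j|$ possibly as small as $\Delta x$, would otherwise obstruct the clean constant stated in~\eqref{eq:value_function_discrete_sc} in dimension $d>1$.
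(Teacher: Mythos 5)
Your proof is correct and follows essentially the same route as the paper, which establishes (i) by the same iteration and defers (ii) and (iii) to the backward-induction arguments of the cited references \cite{CS15,MR3180719,Chowdhury_et_al_2022} --- i.e.\ precisely the induction on $k$ combined with the translation identity $\beta^{1}_{q}(\cdot\pm x_{j})=\beta^{1}_{q\mp j}(\cdot)$ that you make explicit. One small caveat: in the Lipschitz inductive step you should invoke the first-order analogue of that identity, $I^{1}[\phi](y+x_{i-j})-I^{1}[\phi](y)=\sum_{q}\beta^{1}_{q}(y)\big(\phi_{q+i-j}-\phi_{q}\big)$, rather than the componentwise bound on the partial derivatives of $I^{1}[v_{k+1}]$, since the latter, applied at every time step, would cost a factor $\sqrt{d}$ per iteration in dimension $d>1$ and destroy the uniform constant, whereas the identity preserves the grid Lipschitz constant exactly so that the factor $\sqrt{d}$ is paid only once in the final passage to the Euclidean Lipschitz bound, as you indicate.
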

\begin{proof}
\eqref{prop:value_function_discrete_i}: This follows directly from ~\eqref{SL_HJB},~\eqref{h:L_bounded_above_quadratic_term},
~\eqref{h:L_bounded_below_quadratic_term},~\eqref{h:F_bounded},
~\eqref{h:G_bounded}, and iteration.  

\eqref{prop:value_function_discrete_ii}: It follows from~\eqref{h:L_Lipschitz} that 
\begin{equation}
\label{eq:L_Lipschitz_restricted_control}
|L(x,a)-L(y,a)|\leq C_{L,3}(1+C_{\text{{\rm b}}}^{2})|x-y|\quad\text{for all }x,\,y\in\RR^{d},\,a\in\ov{B}(0,C_{\text{{\rm b}}}).
\end{equation}
Using this inequality,~\eqref{h:F_Lipschitz}, and~\eqref{h:G_Lipschitz}, the result follows from the same arguments than those in~\cite[Lemma~3.1{\rm(i)}]{CS15} (see also the proof of~\cite[Lemma~5.3(a)]{Chowdhury_et_al_2022}). 

\eqref{prop:value_function_discrete_iii}: It follows from~\eqref{h:L_second_order_derivative_x_bounded_above} that 
\begin{equation}
L(x+y,a)-2L(x,a)+L(x-y,a)\leq C_{L,5}(1+C_{\text{{\rm b}}}^{2})|y|^2\quad\text{for all }x,\,y\in\RR^{d},\,a\in\ov{B}(0,C_{\text{{\rm b}}}).
\end{equation}
In turn, using~\eqref{h:F_semiconcave} and~\eqref{h:G_semiconcave}, the result follows by arguing as in the proof of~\cite[Lemma~4.1]{MR3180719} (see also the proof of~\cite[Lemma~3.2(ii)]{Chowdhury_et_al_2022}). 
\end{proof}

Using the monotonicity of $\SS^{\text{{\rm fd}}}[\mu]$, the consistency property in ~\eqref{eq:consistency_property}, and the stability result in Proposition~\ref{prop:value_function_discrete}\eqref{prop:value_function_discrete_i}, the Barles-Souganidis relaxed limit method (see~\cite{MR1115933}) yields the following convergence result (see~\cite[Theorem~3.3]{MR3148086} for a detailed proof). 
\begin{proposition} 
\label{prop:convergence_SL_scheme}
Assume {\bf(H\ref{h:h1})-(H\ref{h:h2})}, let $(\mu_{n})_{n\in\NN}\subset C([0,T];\P_{1}(\RR^{d}))$, and let $\big((\Delta t_n,\Delta x_n)\big)_{n\in\NN}\subset ]0,\infty[^{2}$. Suppose that, as $n\to\infty$, $\mu_{n}\to\mu$, for some $\mu\in C([0,T];\P_{1}(\RR^{d}))$, $(\Delta t_{n},\Delta x_{n})\to 0$, and $(\Delta x_{n})^{2}/\Delta t_{n}\to 0$. Then $(v^{\Delta t_n,\Delta x_n}[\mu_{n}])_{n\in\NN}$ converges to $v[\mu]$ uniformly over compact subsets of $[0,T]\times\RR^{d}$. 
\end{proposition}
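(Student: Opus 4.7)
The plan is to apply the Barles--Souganidis relaxed limit method \cite{MR1115933} together with a comparison principle for the HJB equation~\eqref{e:HJB_u_b} built from the truncated Hamiltonian $H_{\text{{\rm b}}}$ (Remark~\ref{rem:alternative_HJB_equation}). Define the upper and lower half-relaxed limits
\begin{equation*}
\ov{v}(t,x) = \limsup_{n\to\infty,\, (s,y)\to(t,x)} v^{\Delta t_n,\Delta x_n}[\mu_n](s,y), \qquad
\underline{v}(t,x) = \liminf_{n\to\infty,\, (s,y)\to(t,x)} v^{\Delta t_n,\Delta x_n}[\mu_n](s,y).
\end{equation*}
By Proposition~\ref{prop:value_function_discrete}\eqref{prop:value_function_discrete_i} both functions are finite and uniformly bounded on $[0,T]\times\RR^d$, by Proposition~\ref{prop:value_function_discrete}\eqref{prop:value_function_discrete_ii} both are Lipschitz in $x$, and trivially $\underline{v}\le\ov{v}$ everywhere.

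Next, I would show that $\ov{v}$ is a viscosity subsolution and $\underline{v}$ a viscosity supersolution of~\eqref{e:HJB_u_b}. For the subsolution property, fix $\varphi\in C^{1}([0,T]\times\RR^{d})$ with bounded derivatives and suppose $\ov{v}-\varphi$ attains a strict local maximum equal to zero at an interior point $(t,x)\in\,]0,T[\,\times\RR^d$. A standard diagonal argument produces grid points $(t_{k_n},x_{i_n})\in\G_{\Delta t_n}\times\G_{\Delta x_n}$, converging to $(t,x)$, at which $v^{\Delta t_n,\Delta x_n}[\mu_n]-\varphi$ realises a local maximum with vanishing value. Monotonicity~\eqref{eq:SL_monotone} then yields an inequality between $\varphi(t_{k_n},x_{i_n})$ and $\SS^{\text{{\rm fd}}}_{k_n,i_n}[\mu_n]\bigl(\varphi(t_{k_n+1},\cdot)|_{\G_{\Delta x_n}}\bigr)$ up to an $o(1)$ additive constant; dividing by $\Delta t_n$ and passing to the limit using the consistency property~\eqref{eq:consistency_property} (which applies exactly because $(\Delta x_n)^{2}/\Delta t_n\to 0$ and $\mu_n\to\mu$) gives
\begin{equation*}
-\partial_t\varphi(t,x)+H_{\text{{\rm b}}}(x,D_x\varphi(t,x))-F(x,\mu(t))\le 0.
\end{equation*}
The supersolution property of $\underline{v}$ is obtained symmetrically. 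For the terminal time $t=T$, one uses $v_{N,i}=G(x_i,\mu_n(T))$, the continuity of $G$ in both variables, and $\mu_n(T)\to\mu(T)$ in $\P_1(\RR^{d})$ to conclude that $\ov{v}(T,\cdot)=\underline{v}(T,\cdot)=G(\cdot,\mu(T))$.

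To conclude, I would invoke a comparison principle for viscosity sub/supersolutions of~\eqref{e:HJB_u_b}: the Hamiltonian $H_{\text{{\rm b}}}$ is continuous in $(x,p)$, Lipschitz in $p$ uniformly in $x$ (being the supremum over a compact set of functions that are affine in $p$), and $F(\cdot,\mu(\cdot))$, $G(\cdot,\mu(T))$ are bounded and continuous. Classical results (see e.g.~\cite{BardiCapuzzo96}) therefore yield $\ov{v}\le\underline{v}$, hence $\ov{v}=\underline{v}$ on $[0,T]\times\RR^d$. By Remark~\ref{rem:alternative_HJB_equation} the common continuous function coincides with the unique viscosity solution $v[\mu]$, and the equality of the half-relaxed limits to a continuous function is equivalent to locally uniform convergence of $v^{\Delta t_n,\Delta x_n}[\mu_n]$ to $v[\mu]$. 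The main subtlety lies in the $\mu$-dependence in the consistency step: the data itself varies along the sequence. This is already encoded in~\eqref{eq:consistency_property}, and the Lipschitzness of $F$ in $x$ together with $d_1(\mu_n(t),\mu(t))\to 0$ (via the Kantorovich--Rubinstein representation~\eqref{e:d_1_alternative}) is enough to control the $F(x_{i_n},\mu_n(t_{k_n}))$ term in the limit passage.
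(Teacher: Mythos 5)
Your proposal is correct and follows exactly the route the paper takes: the paper's own ``proof'' consists of invoking the monotonicity~\eqref{eq:SL_monotone}, the consistency property~\eqref{eq:consistency_property}, and the stability bound of Proposition~\ref{prop:value_function_discrete}\eqref{prop:value_function_discrete_i} within the Barles--Souganidis relaxed-limit framework, deferring the details to~\cite[Theorem~3.3]{MR3148086}. You have simply written out that same argument (half-relaxed limits, sub/supersolution properties, comparison for the truncated Hamiltonian $H_{\text{{\rm b}}}$, and identification with $v[\mu]$ via Remark~\ref{rem:alternative_HJB_equation}), so no further comparison is needed.
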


Given $\eps>0$, consider the mollifier $\RR^d\ni x\mapsto \rho_{\eps}(x)=\rho(x/\eps)/\eps^d\in\RR^d$, where $\rho\in C^{\infty}(\RR^d)$ has bounded derivatives of any order and satisfies $\rho(\RR^d)\subset[0,\infty[$ and $\int_{\RR^d}\rho(x)\dd x=1$. Given $\varphi\in W^{1,\infty}(\RR^{d})$, a standard computation shows that 
\begin{align}
\sup_{x\in\RR^{d}}\big|(\rho_{\eps}*\varphi)(x)-\varphi(x)\big|&\leq\eps\|D\varphi\|_{L^{\infty}(\RR^{d})},
\label{eq:infty_norm_difference_eps}\\
\sup_{x\in\RR^{d}}\big\|D^{\ell}(\rho_{\eps}*\varphi)(x)\big\|&\leq c_{\ell}\eps^{1-\ell}\quad\text{for all }\ell\in\NN,
\label{eq:bounds_higher_order_derivatives_eps}
\end{align}
where $\big\|D^{\ell}(\rho_{\eps}*\varphi)(x)\big\|$ denotes the operator norm of $D^{\ell}(\rho_{\eps}*\varphi)(x)$ and $c_{\ell}>0$ depends only on $\ell$. Let us set $\Delta=(\Delta t,\Delta x,\eps)$ and define 
\begin{equation}
\label{eq:v_epsilon}
v^{\Delta}[\mu](t,\cdot)=\rho_{\eps}*v^{\Delta t,\Delta x}[\mu](t,\cdot)
\quad\text{for all }t\in[0,T].
\end{equation}
The function $v^{\Delta}[\mu]$ satisfies similar properties than $v^{\Delta t,\Delta x}[\mu]$, as the following proposition shows. 
\begin{proposition}
\label{prop:value_function_discrete_eps}
Assume {\bf(H\ref{h:h1})-(H\ref{h:h2})}, let $\mu\in C([0,T];\P_{1}(\RR^{d}))$, and let $\Delta=(\Delta t,\Delta x,\eps)\in]0,\infty[^{3}$. Then the following holds:
\label{prop:properties_v_eps}
\begin{enumerate}[{\rm(i)}]
\item
\label{prop:value_function_discrete_eps_i}
{\rm[Stability]} We have 
\begin{equation}
|v^{\Delta}[\mu](t,x)|\leq \widetilde{C}_{\text{{\rm v}}}\quad\text{for all }(t,x)\in [0,T]\times\RR^{d},
\end{equation}
with $\widetilde{C}_{\text{{\rm v}}}>0$ being as in Proposition~\ref{prop:value_function_discrete}\eqref{prop:value_function_discrete_i}.
\vspace{0.1cm} 
\item 
\label{prop:value_function_discrete_eps_ii}
{\rm[Lipschitz property]} We have 
\be 
\label{eq:value_function_discrete_Lipschitz_eps}
\big|v^{\Delta}[\mu](t,x)-v^{\Delta}[\mu](t,y)\big|\leq \widetilde{C}_{\text{{\rm Lip}}}|x-y|\quad\text{for all }t\in[0,T],\,x,\,y\in\RR^d,
\ee
with $\widetilde{C}_{\text{{\rm Lip}}}>0$ being as in Proposition~\ref{prop:value_function_discrete}\eqref{prop:value_function_discrete_ii}.
\vspace{0.1cm} 
\item 
\label{prop:value_function_discrete_eps_iii}
{\rm[Approximate semi-concavity]} We have 
\begin{multline}
\label{eq:value_function_discrete_semiconcave_eps}
v^{\Delta}[\mu](t,x+y)-2v^{\Delta}[\mu](t,x)+v^{\Delta}[\mu](t,x-y)\\
\leq\widetilde{C}_{\text{{\rm asc}}}\Bigg(|y|^2+(\Delta x)^2+
\frac{(\Delta x)^2}{\eps}\Bigg)\quad\text{for all }t\in[0,T],\,x,\,y\in\RR^{d},
\end{multline}
where $\widetilde{C}_{\text{{\rm asc}}}>0$ is independent of
$(\mu,\Delta)$. 
\end{enumerate}
\end{proposition}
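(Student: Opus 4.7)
Parts \eqref{prop:value_function_discrete_eps_i} and \eqref{prop:value_function_discrete_eps_ii} are immediate from standard properties of convolution with the nonnegative probability kernel $\rho_\eps$: convolution does not increase the $L^\infty$ norm, and it preserves the Lipschitz constant of a function (by a one-line change of variables). Applied to $v^{\Delta t,\Delta x}[\mu](t,\cdot)$, this transfers the bounds of Proposition~\ref{prop:value_function_discrete}\eqref{prop:value_function_discrete_i}--\eqref{prop:value_function_discrete_ii} with the same constants $\widetilde C_{\text{v}}$ and $\widetilde C_{\text{Lip}}$.

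For \eqref{prop:value_function_discrete_eps_iii}, the plan is a grid--approximation plus Taylor split. Given $y\in\R^d$, pick $y_0\in\G_{\Delta x}$ with $|y-y_0|_\infty\le\Delta x/2$, so that $|y-y_0|\le c_d\Delta x$ with $c_d=\sqrt d/2$, and set $r=y-y_0$. Writing $\Delta^2_y f(x):=f(x+y)-2f(x)+f(x-y)$, decompose
\[
\Delta^2_y v^\Delta[\mu](t,x)=\Delta^2_{y_0}v^\Delta[\mu](t,x)+\Theta,
\qquad
\Theta=\bigl[v^\Delta(x+y)-v^\Delta(x+y_0)\bigr]+\bigl[v^\Delta(x-y)-v^\Delta(x-y_0)\bigr].
\]
Since convolution commutes with translations, the first piece equals $\int\rho_\eps(z)\Delta^2_{y_0}v^{\Delta t,\Delta x}[\mu](t,x-z)\,dz$, and Proposition~\ref{prop:value_function_discrete}\eqref{prop:value_function_discrete_iii} applied pointwise in $z$ with grid increment $y_0$ gives
\[
\Delta^2_{y_0}v^\Delta[\mu](t,x)\le\widetilde C_{\text{sc}}|y_0|^2\le 2\widetilde C_{\text{sc}}\bigl(|y|^2+c_d^2(\Delta x)^2\bigr),
\]
producing the $|y|^2+(\Delta x)^2$ contribution of the claimed envelope. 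For $\Theta$, I would exploit the smoothness bought from the mollification: writing $D^2v^\Delta(t,\cdot)=D\rho_\eps\ast Dv^{\Delta t,\Delta x}(t,\cdot)$ and invoking~\eqref{eq:bounds_higher_order_derivatives_eps} together with Proposition~\ref{prop:value_function_discrete}\eqref{prop:value_function_discrete_ii} yields $\|D^2v^\Delta(t,\cdot)\|_\infty\le c_1\widetilde C_{\text{Lip}}/\eps$. A second-order Taylor expansion of $v^\Delta(t,\cdot)$ around $x\pm y_0$ then produces
\[
\Theta=\bigl[Dv^\Delta(t,x+y_0)-Dv^\Delta(t,x-y_0)\bigr]\cdot r+O\bigl(\|D^2v^\Delta\|_\infty|r|^2\bigr),
\]
and the quadratic remainder is already controlled at the level $C(\Delta x)^2/\eps$.

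The main obstacle is closing the estimate on the first-order term $\bigl[Dv^\Delta(x+y_0)-Dv^\Delta(x-y_0)\bigr]\cdot r$. A further application of the Hessian bound along the segment from $x-y_0$ to $x+y_0$ yields $|Dv^\Delta(x+y_0)-Dv^\Delta(x-y_0)|\le 2\|D^2v^\Delta\|_\infty|y_0|\le C|y_0|/\eps$, whence the linear contribution is of size $C|y_0|\Delta x/\eps\le C(|y|+\Delta x)\Delta x/\eps$. This piece does not split into $|y|^2+(\Delta x)^2/\eps$ by a single Young inequality, and a careful interpolation is needed between the crude Lipschitz bound $|Dv^\Delta|\le\widetilde C_{\text{Lip}}$ (effective when $|y|$ is large compared with $\eps$) and the Hessian bound (effective when $|y|\lesssim\eps$), choosing in each regime the optimal distribution of the factor $|r|\le c_d\Delta x$ so that the cross term is absorbed into $|y|^2$, $(\Delta x)^2$, and $(\Delta x)^2/\eps$ with constants independent of $(\mu,\Delta)$. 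This case analysis is precisely what generates the $(\Delta x)^2/\eps$ correction in the announced envelope and constitutes the technical core of the proposition.
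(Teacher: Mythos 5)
Parts~\eqref{prop:value_function_discrete_eps_i} and~\eqref{prop:value_function_discrete_eps_ii} of your argument are correct and coincide with what the paper does: since $\rho_\eps\geq 0$ and $\int_{\RR^d}\rho_\eps=1$, convolution preserves both the uniform bound and the Lipschitz constant of $v^{\Delta t,\Delta x}[\mu](t,\cdot)$, with the same constants.

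Part~\eqref{prop:value_function_discrete_eps_iii} contains a genuine gap, located exactly where you flag it, and the repair you sketch cannot work. After your decomposition, everything reduces to the term $B:=\big[D_xv^{\Delta}[\mu](t,x+y_0)-D_xv^{\Delta}[\mu](t,x-y_0)\big]\cdot r$ with $|r|\le \sqrt{d}\,\Delta x/2$, and the only estimates you bring to bear on it are $\|D_xv^{\Delta}[\mu](t,\cdot)\|_{\infty}\le \widetilde{C}_{\text{{\rm Lip}}}$ and $\|D^2_xv^{\Delta}[\mu](t,\cdot)\|_{\infty}\le c_2\widetilde{C}_{\text{{\rm Lip}}}/\eps$. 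These give at best $|B|\le C\,\Delta x\,\min\{1,|y_0|/\eps\}$, and no case analysis or interpolation between the two regimes turns this into $C\big(|y|^2+(\Delta x)^2+(\Delta x)^2/\eps\big)$: take $|y|=(\Delta x)^{3/4}$ and $\eps=(\Delta x)^{1/2}$, which is admissible since the proposition imposes no relation between $y$, $\Delta x$, and $\eps$. Then $\Delta x\,\min\{1,|y|/\eps\}=(\Delta x)^{5/4}$, whereas $|y|^2+(\Delta x)^2+(\Delta x)^2/\eps=O\big((\Delta x)^{3/2}\big)$, so your bound exceeds the target by the unbounded factor $(\Delta x)^{-1/4}$. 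Choosing a different grid point $y_0$ does not help either: taking $y_0=0$, for instance, only closes the estimate when $|y|\lesssim\Delta x$. In other words, what you call the ``technical core'' is not a deferred optimization; with the tools you have introduced, the inequality you need for $B$ is false, and the cancellation that makes the statement true is no longer visible once you have isolated the term linear in $r$ and paid the full $O(1/\eps)$ Hessian bound for it. The paper does not attempt this route: for~\eqref{prop:value_function_discrete_eps_iii} it argues as in the proof of~\cite[Lemma~4.2]{MR3180719} (see also~\cite[Lemma~5.5(b)]{Chowdhury_et_al_2022}), where the error coming from rounding $y$ to the grid is produced directly at order $(\Delta x)^2/\eps$, exploiting the discrete semiconcavity of Proposition~\ref{prop:value_function_discrete}\eqref{prop:value_function_discrete_iii} once more rather than the crude bound $\|D_x^2v^{\Delta}[\mu]\|_\infty\lesssim 1/\eps$ to control the relevant gradient increment. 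As written, your argument only establishes the weaker estimate containing the additional term $\Delta x\,\min\{1,|y|/\eps\}$, which is not sufficient for the later applications (in particular Proposition~\ref{prop:bound_hessian_semiconcavity_of_v}).
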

\begin{proof} Assertions~\eqref{prop:value_function_discrete_eps_i} and \eqref{prop:value_function_discrete_eps_ii} follow directly from~\eqref{eq:v_epsilon} and the corresponding assertions in Proposition~\ref{prop:value_function_discrete}. The proof of~\eqref{prop:value_function_discrete_eps_iii} follows from  Proposition~\ref{prop:value_function_discrete}\eqref{prop:value_function_discrete_iii} and arguing as in the proof of~\cite[Lemma~4.2]{MR3180719} (see also the proof of~\cite[Lemma~5.5(b)]{Chowdhury_et_al_2022}).
\end{proof}

In the following, given $A\subset\RR^{d}$, we denote by $\mathbb{I}_{A}$ the indicator function of $A$. The convergence result in the following proposition will play an important role in the next section.

\begin{proposition}
\label{prop:convergence_v_eps} Assume {\bf(H\ref{h:h1})-(H\ref{h:h2})}, let 
$(\mu_{n})_{n\in\NN}\subset C([0,T];\P_{1}(\RR^{d}))$, and let $\big((\Delta t_n,\Delta x_n,\eps_{n})\big)_{n\in\NN}\subset ]0,\infty[^{3}$. Set $\Delta_{n}=(\Delta t_n,\Delta x_n,\eps_{n})$ and let $\mu\in C([0,T];\P_1(\RR^{d}))$. Suppose that, as $n\to\infty$, $\mu_{n}\to\mu$, $\Delta_{n}\to 0$, and $(\Delta x_{n})^{2}/\Delta t_{n}\to 0$. Then the following hold:
\begin{enumerate}[{\rm(i)}]
\item
\label{prop:convergence_v_eps_i}
$(v^{\Delta_n}[\mu_{n}])_{n\in\NN}$ converges to $v[\mu]$ uniformly over compact subsets of $[0,T]\times\RR^{d}$.
\item
\label{prop:convergence_v_eps_ii}
If, in addition, $\Delta x_{n}/\eps_{n}\to 0$, then, for every $K\subset [0,T]\times\RR^{d}$ compact and $q\in[1,\infty[$, 
\begin{equation}
\label{eq:pointwise_convergence_feedback}
\mathbb{I}_{K} D_{p}H(\cdot,D_{x}v^{\Delta_{n}}[\mu_{n}])\to \mathbb{I}_{K} D_{p}H(\cdot,D_{x}v[\mu])\quad\text{in }L^{q}([0,T]\times\RR^{d}). 
\end{equation}
\end{enumerate}
\end{proposition}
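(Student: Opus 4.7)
For (i), I would apply the triangle inequality
\[
\bigl|v^{\Delta_n}[\mu_n](t,x) - v[\mu](t,x)\bigr| \leq \bigl|v^{\Delta_n}[\mu_n](t,x) - v^{\Delta t_n, \Delta x_n}[\mu_n](t,x)\bigr| + \bigl|v^{\Delta t_n, \Delta x_n}[\mu_n](t,x) - v[\mu](t,x)\bigr|.
\]
The first term is bounded by $\eps_n \widetilde{C}_{\text{{\rm Lip}}}$ via~\eqref{eq:infty_norm_difference_eps} applied to the spatial convolution in~\eqref{eq:v_epsilon}, together with the uniform-in-$n$ Lipschitz bound of Proposition~\ref{prop:value_function_discrete}\eqref{prop:value_function_discrete_ii}; this tends to $0$ as $n \to \infty$. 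The second term tends to $0$ uniformly on compacts by Proposition~\ref{prop:convergence_SL_scheme}.

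For (ii), the plan is to reduce to a.e. convergence of the spatial gradients on $K$. Since $H\in C^{2}$ (Remark~\ref{rem:H_and_L}), $|D_x v^{\Delta_n}[\mu_n]| \leq \widetilde{C}_{\text{{\rm Lip}}}$ everywhere on $[0,T] \times \RR^d$, and $|D_x v[\mu]| \leq C_{\text{{\rm Lip}}}$ a.e., the map $D_p H$ is Lipschitz in its second argument on the bounded ball that contains the relevant values; combined with the uniform $L^\infty$ bound on $K$ (a set of finite Lebesgue measure), dominated convergence reduces the claim to showing that $D_x v^{\Delta_n}[\mu_n](t,x) \to D_x v[\mu](t,x)$ for a.e. $(t,x) \in K$. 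By Rademacher's theorem applied to $v[\mu](t,\cdot)$ (Lipschitz thanks to Proposition~\ref{prop:value_function}\eqref{prop:value_function_ii}) and Fubini, $v[\mu](t_0,\cdot)$ is differentiable at $x_0$ for a.e. $(t_0,x_0)$, so that $D_x^+ v[\mu](t_0,x_0) = \{D_x v[\mu](t_0,x_0)\}$. By the uniform bound on the discrete gradients, to obtain the pointwise limit it suffices to prove that every cluster point $p$ of $D_x v^{\Delta_n}[\mu_n](t_0,x_0)$ lies in $D_x^+ v[\mu](t_0,x_0)$.

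The key technical step is to combine the approximate semiconcavity of Proposition~\ref{prop:value_function_discrete_eps}\eqref{prop:value_function_discrete_eps_iii} with the smoothness of $v^{\Delta_n}[\mu_n]$ to establish an approximate one-sided Taylor expansion of the form
\[
v^{\Delta_n}[\mu_n](t_0, x_0 + y) \leq v^{\Delta_n}[\mu_n](t_0, x_0) + \langle D_x v^{\Delta_n}[\mu_n](t_0, x_0), y \rangle + C|y|^{2} + \sigma_n,
\]
with $\sigma_n \to 0$ and $C$ independent of $n$, valid for every fixed $y \in \RR^d$. The additional assumption $\Delta x_n/\eps_n \to 0$ enters here, since it forces the error $(\Delta x_n)^{2} + (\Delta x_n)^{2}/\eps_n$ in the semiconcavity estimate to vanish. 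Passing to the limit along the subsequence $n_k$ using assertion (i) and the continuity of $v[\mu]$ in $t$ then yields
\[
v[\mu](t_0, x_0 + y) \leq v[\mu](t_0, x_0) + \langle p, y \rangle + C|y|^{2} \quad\text{for all } y \in \RR^d,
\]
which places $p$ in $D_x^+ v[\mu](t_0, x_0)$ and therefore identifies $p = D_x v[\mu](t_0, x_0)$.

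The main obstacle is the rigorous derivation of the one-sided Taylor bound above. Approximate semiconcavity controls only a symmetric second difference, and a generic smooth function meeting such a bound need not satisfy a one-sided Taylor estimate with the same constant. The argument must exploit that $v^{\Delta_n}[\mu_n]$ is obtained from the piecewise multilinear $v^{\Delta t_n, \Delta x_n}[\mu_n]$ by convolution with $\rho_{\eps_n}$: the pre-mollified function enjoys the discrete semiconcavity of Proposition~\ref{prop:value_function_discrete}\eqref{prop:value_function_discrete_iii} on arbitrary grid displacements, and the calibrated interplay between $\Delta x_n$ and $\eps_n$ transfers this structure to the smoothed version. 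Once this step is secured, the remainder of the proof is a clean combination of Rademacher's theorem, dominated convergence, and the stability of the superdifferential under uniform approximation, as outlined above.
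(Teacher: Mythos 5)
Your part (i) is correct and coincides with the paper's argument: the mollification error is $O(\eps_n)$ by \eqref{eq:infty_norm_difference_eps} combined with the uniform Lipschitz bound of Proposition~\ref{prop:value_function_discrete}\eqref{prop:value_function_discrete_ii}, and the remaining term is handled by Proposition~\ref{prop:convergence_SL_scheme}. For part (ii), your overall strategy --- reduce to almost everywhere convergence of $D_{x}v^{\Delta_n}[\mu_n]$ and conclude by continuity of $D_pH$, the uniform gradient bounds, and dominated convergence on the finite-measure set $K$ --- is also the paper's, and your Rademacher/superdifferential mechanism is essentially what underlies the result the paper invokes.

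The genuine gap is the step you yourself flag and then leave open: the approximate one-sided Taylor inequality. As written, the proposal does not prove it, and the route you gesture at does not close under the stated hypotheses. Two concrete issues. First, your account of where $\Delta x_n/\eps_n\to 0$ enters is inaccurate: the error $(\Delta x_n)^2+(\Delta x_n)^2/\eps_n$ in \eqref{eq:value_function_discrete_semiconcave_eps} already vanishes under the weaker condition $(\Delta x_n)^2/\eps_n\to 0$; the full strength of $\Delta x_n/\eps_n\to 0$ is needed because the quantity that must vanish lives at the level of the gradients, namely the term $(\Delta x_n)^2/\eps_n^2$ in the one-sided Lipschitz estimate \eqref{eq:osl_condition}. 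Second, if you tried to secure the one-sided Taylor bound from the only quantitative second-order information available, the Hessian estimate of Proposition~\ref{prop:bound_hessian_semiconcavity_of_v}, the constant would be of order $\widetilde{C}_{\text{{\rm hb}}}(1+(\Delta x_n)^2/\eps_n^4)$, which is uniformly bounded only if $\Delta x_n=O(\eps_n^2)$ --- strictly stronger than what (ii) assumes. The paper sidesteps both problems by first converting the approximate semiconcavity into the gradient-level one-sided Lipschitz condition \eqref{eq:osl_condition} via \cite[Remark~6]{MR3180719}, and then invoking the argument of \cite[Theorem~3.5]{MR3148086} to obtain $D_{x}v^{\Delta_n}[\mu_n]\to D_{x}v[\mu]$ a.e. If you wish to keep your superdifferential argument self-contained, the missing inequality does follow from \eqref{eq:osl_condition}: write $v^{\Delta_n}[\mu_n](t,x+y)-v^{\Delta_n}[\mu_n](t,x)-\langle D_{x}v^{\Delta_n}[\mu_n](t,x),y\rangle=\int_0^1 s^{-1}\big\langle D_{x}v^{\Delta_n}[\mu_n](t,x+sy)-D_{x}v^{\Delta_n}[\mu_n](t,x),sy\big\rangle\,\dd s$, split the integral at $s_0=(\Delta x_n)^2/\eps_n^2$, bound the piece on $[0,s_0]$ by $2\widetilde{C}_{\text{{\rm Lip}}}|y|s_0$ using Proposition~\ref{prop:value_function_discrete_eps}\eqref{prop:value_function_discrete_eps_ii}, and apply \eqref{eq:osl_condition} on $[s_0,1]$ to get $\tfrac{c}{2}|y|^2+c\,s_0\log(1/s_0)$; the total error beyond $\tfrac{c}{2}|y|^2$ tends to $0$ precisely when $\Delta x_n/\eps_n\to 0$. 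Without some such argument the proposal is incomplete at its pivotal point.
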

\begin{proof}
\eqref{prop:convergence_v_eps_i}: This follows from Proposition~\ref{prop:value_function_discrete_eps}\eqref{prop:value_function_discrete_eps_ii}, estimate~\eqref{eq:infty_norm_difference_eps}, and Proposition~\ref{prop:convergence_SL_scheme}.

\eqref{prop:convergence_v_eps_ii}: It follows from Proposition~\ref{prop:value_function_discrete_eps}\eqref{prop:value_function_discrete_eps_iii} and~\cite[Remark 6]{MR3180719} that 
\begin{equation}
\label{eq:osl_condition}
\Big\langle D_{x}v^{\Delta}[\mu](t,y)-D_{x}v^{\Delta}[\mu](t,x),y-x\Big\rangle\leq c\Bigg(|y-x|^{2}+\frac{(\Delta x)^{2}}{\eps^{2}}\Bigg)\quad\text{for all }t\in[0,T],\,x,\,y\in\RR^{d}.
\end{equation}
Using this inequality and arguing as in the proof of~\cite[Theorem~3.5]{MR3148086}, one deduces that, as $n\to\infty$, $D_{x}v^{\Delta_{n}}[\mu_{n}]\to D_{x}v[\mu]$ almost everywhere in $[0,T]\times\RR^{d}$. In turn, since $H$ is of class $C^{2}$, we get that $ D_{p}H(\cdot,D_{x}v^{\Delta_{n}}[\mu_{n}])\to D_{p}H(\cdot,D_{x}v[\mu])$ almost everywhere in $[0,T]\times\RR^{d}$. Thus,~\eqref{eq:pointwise_convergence_feedback} follows from Proposition~\ref{prop:value_function_discrete_eps}\eqref{prop:value_function_discrete_eps_ii} and Lebesgue's dominated convergence theorem.
\end{proof}

We conclude this section with a useful estimate for $D^{2}v^{\Delta}[\mu]$. 
\begin{proposition}
\label{prop:bound_hessian_semiconcavity_of_v}
Assume {\bf(H\ref{h:h1})-(H\ref{h:h2})}, let $\mu\in C([0,T];\P_{1}(\RR^{d}))$, and let $\Delta=(\Delta t,\Delta x,\eps)\in]0,\infty[^{2}\times]0,1[$. Then it holds that
\be
\label{eq:hessian_bound_sc_v} 
\big\langle D_{x}^{2}v^{\Delta}[\mu](t,x)y,y\big\rangle\leq \widetilde{C}_{\text{{\rm hb}}}\Bigg(1 + \frac{(\Delta x)^{2}}{\eps^4}\Bigg)|y|^{2}\quad\text{for all }t\in[0,T],\,x,\,y\in\RR^{d},
\ee 
where $\widetilde{C}_{\text{{\rm hb}}}>0$ is independent of $(\mu,\Delta)$. 
\end{proposition}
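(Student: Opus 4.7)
The plan is to combine the smoothness inherited by $v^{\Delta}[\mu]$ from the convolution with $\rho_\eps$ with the approximate semiconcavity already proved in Proposition~\ref{prop:value_function_discrete_eps}\eqref{prop:value_function_discrete_eps_iii}. Since $v^{\Delta t,\Delta x}[\mu](t,\cdot)$ is Lipschitz with a constant independent of $(\mu,\Delta t,\Delta x)$ (Proposition~\ref{prop:value_function_discrete}\eqref{prop:value_function_discrete_ii}), estimate~\eqref{eq:bounds_higher_order_derivatives_eps} applied with $\ell=4$ yields $\|D_{x}^{4}v^{\Delta}[\mu](t,\cdot)\|_{\infty}\le C\eps^{-3}$, with $C$ independent of $(\mu,\Delta)$. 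Fixing $(t,x,y)\in[0,T]\times\RR^{d}\times\RR^{d}$ and $h>0$, a fourth-order Taylor expansion of the smooth function $v^{\Delta}[\mu](t,\cdot)$, in which the cubic terms cancel upon symmetrization, produces
\begin{equation*}
v^{\Delta}[\mu](t,x+hy)+v^{\Delta}[\mu](t,x-hy)-2v^{\Delta}[\mu](t,x)=h^{2}\langle D_{x}^{2}v^{\Delta}[\mu](t,x)y,y\rangle+R(h),
\end{equation*}
with $|R(h)|\le Ch^{4}|y|^{4}/\eps^{3}$.

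On the other hand, Proposition~\ref{prop:value_function_discrete_eps}\eqref{prop:value_function_discrete_eps_iii} applied with the increment $hy$, combined with $\eps\in\,]0,1[$ to absorb $(\Delta x)^{2}$ into $(\Delta x)^{2}/\eps$, gives
\begin{equation*}
v^{\Delta}[\mu](t,x+hy)+v^{\Delta}[\mu](t,x-hy)-2v^{\Delta}[\mu](t,x)\le C\bigl(h^{2}|y|^{2}+(\Delta x)^{2}/\eps\bigr).
\end{equation*}
Subtracting the Taylor identity from this upper bound and dividing by $h^{2}$ yields
\begin{equation*}
\langle D_{x}^{2}v^{\Delta}[\mu](t,x)y,y\rangle\le C|y|^{2}+\frac{C(\Delta x)^{2}}{h^{2}\eps}+\frac{Ch^{2}|y|^{4}}{\eps^{3}}.
\end{equation*}
Choosing $h^{2}=\Delta x\,\eps/|y|^{2}$ balances the last two terms and produces a bound of the form $C(1+\Delta x/\eps^{2})|y|^{2}$; the elementary inequality $1+s\le 2(1+s^{2})$ for $s\ge 0$, applied with $s=\Delta x/\eps^{2}$, then delivers the claimed estimate~\eqref{eq:hessian_bound_sc_v}.

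The main obstacle is the reconciliation of two antagonistic features of $v^{\Delta}[\mu]$: the approximate semiconcavity carries an unavoidable discretization defect of size $(\Delta x)^{2}/\eps$, which forbids passing directly to the limit in the second-order difference quotient; meanwhile, the smoothness gained from mollification produces Taylor remainders that blow up like $\eps^{-3}$ as $\eps\downarrow 0$. The heart of the argument is the optimal choice of the test scale $h$ that balances these two competing errors, and the form $1+(\Delta x)^{2}/\eps^{4}$ of the bound precisely reflects the scaling $\Delta x\lesssim\eps^{2}$ under which the scheme is expected to remain well-behaved.
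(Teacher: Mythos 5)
Your proof is correct and follows essentially the same route as the paper's: both combine the approximate semiconcavity of Proposition~\ref{prop:value_function_discrete_eps}\eqref{prop:value_function_discrete_eps_iii} with the fourth-derivative bound $c_{4}\eps^{-3}$ coming from~\eqref{eq:bounds_higher_order_derivatives_eps}, via a symmetrized fourth-order Taylor expansion at a carefully chosen increment. The only difference is the test scale (you take $h|y|=\sqrt{\Delta x\,\eps}$ while the paper takes $\tau|y|=\Delta x/\sqrt{\eps}$), which gives you the slightly sharper intermediate bound $C(1+\Delta x/\eps^{2})|y|^{2}$ before it is weakened to the stated form; apart from not mentioning the trivial case $y=0$, nothing is missing.
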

\begin{proof}
Let us fix $t\in[0,T]$ and $x$, $y\in\RR^{d}$. If $y=0$ the result is true and, hence, let us assume that $y\neq 0$ and set $\tau= \Delta x/(\sqrt{\eps}|y|)$. In what follows, $C>0$ denotes a constant, independent of $(\mu,\Delta)$ which may change from line to line. It follows from Proposition~\ref{prop:value_function_discrete_eps}\eqref{prop:value_function_discrete_eps_iii} that
\be
\label{eq:semiconcavidaddebilcondossdf} 
v^{\Delta}[\mu](t,x+\tau y)-2v^{\Delta}[\mu](t,x)+v^{\Delta }[\mu](t,x-\tau y)
\leq C\tau^{2}|y|^{2}.
\ee   
On the other hand, a Taylor expansion of order $4$ and \eqref{eq:bounds_higher_order_derivatives_eps} imply that
\begin{align}
v^{\Delta}[\mu](t,x+\tau y)&\geq v^{\Delta}[\mu](t,x)+\langle D_{x}v^{\Delta}[\mu](t,x),\tau y\rangle+\frac{1}{2}\langle D_{x}^{2}v^{\Delta}[\mu](t,x)\tau y, \tau y\rangle\nonumber\\
&\hspace{0.3cm}+\frac{1}{6}D_{x}^{3}v^{\Delta}[\mu](t,x)(\tau y)^{3}
-\frac{1}{\eps^{3}}C|\tau y|^{4}, \nonumber\\[4pt]
v^{\Delta}[\mu](t,x-\tau y)&\geq v^{\Delta}[\mu](t,x)-\langle D_{x}v^{\Delta}[\mu](t,x), \tau y\rangle+\frac{1}{2}\langle D_{x}^{2}v^{\Delta}[\mu](t,x)\tau y,\tau y \rangle\nonumber\\ 
&\hspace{0.3cm}-\frac{1}{6}D_{x}^{3}v^{\Delta}[\mu](t,x)(\tau y)^{3}
-\frac{1}{\eps^{3}}C|\tau y|^{4},\nonumber
\end{align}
Adding both inequalities, using~\eqref{eq:semiconcavidaddebilcondossdf} and the relation $\tau|y|=\Delta x/\sqrt{\eps}$, we get
\begin{equation}
\langle D_{x}^{2}v^{\Delta}[\mu](t,x)\tau y,\tau y\rangle\leq
C\Bigg(1+\frac{(\Delta x)^{2}}{\eps^{4}}\Bigg)\tau^{2}|y|^{2}.
\end{equation}
Dividing by $\tau^{2}$ yields~\eqref{eq:hessian_bound_sc_v}.
\end{proof}
\section{A Lagrange-Galerkin type scheme for the continuity equation} 
\label{sec:lg_continuity_equation} 
Given $\mu \in C([0,T];\P_1(\RR^d))$ and $\Delta=(\Delta t,\Delta x,\eps)\in ]0,\infty[^3$, let $v^{\Delta}[\mu]$ be defined as in \eqref{eq:v_epsilon}. Consider the continuity equation 
\begin{align}
\partial_t m - \diver\left(D_pH(x,D_{x}v^{\Delta}[\mu])m\right) &=0
\qquad\mbox{in }]0,T[\times\RR^d,\nonumber\\ 
m(0)&=m_0^*\qquad\text{in }\RR^d.\label{continuity_equation_H}
\end{align}
Since $H$ is of class $C^{2}$ and $D_{x}v^{\Delta}[\mu]$ is bounded and Lipschitz, 
by~\cite[Proposition 8.1.8]{Ambrosiogiglisav} equation \eqref{continuity_equation_H} admits a unique solution $m^{\Delta}[\mu] \in C\left([0,T];\P_{1}(\RR^d)\right)$, which can be represented as
\be
\label{representation_formula_continuity_equation}
m^{\Delta}[\mu](t)=\Phi^{\Delta}[\mu](0,t,\cdot)\sharp m_0^*\quad\text{for all }t\in [0,T],
\ee 
where, for all $s\in [0,T)$ and $x\in \RR^d$, $\Phi^{\Delta}[\mu](s,\cdot,x)$ denotes the unique solution to 
\begin{align}
\dot{X}(t)&=-D_pH(X(t),D_{x}v^{\Delta}[\mu](t,X(t)))
\qquad\text{for a.e. }t\in (s,T),\nonumber\\ 
X(s)&=x.\label{dynamique}
\end{align} 
Relations~\eqref{representation_formula_continuity_equation} and~\eqref{dynamique} imply that
\be
\label{representation_formula_continuity_equation_two_times}
m^{\Delta}[\mu](t)=\Phi^{\Delta}[\mu] (s,t,\cdot)\sharp 
m^{\Delta}[\mu](s)\quad\text{for all }s,\,t\in[0,T],\,s\leq t.
\ee

On the other hand, notice that~\eqref{eq:D_P_H_linear_growth} and the uniform bound $\|D_{x}v^{\Delta}[\mu](t,\cdot)\|_{L^{\infty}(\RR^{d})}\leq \widetilde{C}_{\text{{\rm Lip}}}$ for all $t\in [0,T]$, which follows from Proposition~\ref{prop:value_function_discrete_eps}\eqref{prop:value_function_discrete_eps_ii}, yield the existence of $C_{\text{{\rm bf}}}>0$, independent of $(\mu,\Delta)$, such that 
\begin{equation}
\label{eq:bounded_flow}
\sup_{(t,x)\in [0,T]\times\RR^{d}}\big|D_pH(x,D_{x}v^{\Delta}[\mu](t,x))\big|
\leq C_{\text{{\rm bf}}}.
\end{equation}
Relation~\eqref{representation_formula_continuity_equation_two_times} and estimate~\eqref{eq:bounded_flow} have two straightforward consequences. The first one is that, by~{\bf(H\ref{h:initial_condition})}{\rm(i)}, we have
\be
\label{compact_support_continuous_case} 
\supp \left(m^{\Delta}[\mu](t)\right)\subset \ov{B}(0,C^{*}+TC_{\text{{\rm bf}}}) \quad \text{for all $t\in[0,T]$}.
\ee
The second one, is the uniform equicontinuity of the familly 
$\big\{m^{\Delta}[\mu]\,\big|\,\Delta\in]0,\infty[^3\big\}$ in $C([0,T];\P_1(\RR^d))$. More precisely, using~\eqref{e:d_1_alternative}, an easy computation shows that 
\be
\label{equicontinuity_continuous_time}
d_1\left(m^{\Delta}[\mu](t), m^{\Delta}[\mu](s)\right)
\leq C_{\text{{\rm bf}}}|t-s|\quad\text{for all } s,\,t\in[0,T]. 
\ee

The purpose of the following two propositions is to provide some stability estimates for $m^{\Delta}[\mu]$, which, together with~\eqref{compact_support_continuous_case}  and~\eqref{equicontinuity_continuous_time},  motivate the forthcoming analysis for a LG discretization of~\eqref{continuity_equation_H}.
\begin{proposition}
\label{prop:stability_cas_continu_I}
Assume {\bf(H\ref{h:h1})}-{\bf(H\ref{h:initial_condition})}, let $\mu \in C([0,T];\P_1(\RR^d))$, and let $\Delta=(\Delta t,\Delta x,\eps)\in ]0,\infty[^3$. Then for every $c>0$ there exists  $\tilde{c}>0$, independent of $\mu$, such that, if $\Delta x\leq c\eps^{2}$, it holds that
\be
\label{uniform_bound_lp_continuous_case}
\|m^{\Delta}[\mu](t,\cdot)\|_{L^{\mathsf{p}}(\RR^{d})}\leq \tilde{c}\|m_0^*\|_{L^{\mathsf{p}}(\RR^{d})}\quad\text{for all }t\in[0,T].
\ee
\end{proposition}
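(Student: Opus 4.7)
The plan is to exploit the representation formula \eqref{representation_formula_continuity_equation} together with the change-of-variables formula for push-forwards, reducing the $L^{\mathsf{p}}$ estimate to a lower bound on the Jacobian determinant of the flow map, and then to obtain this lower bound via Liouville's formula, the convexity of $H$ in $p$, and the sharp semi-concavity estimate of Proposition~\ref{prop:bound_hessian_semiconcavity_of_v}.

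Set $b(t,y)=-D_{p}H(y,D_{x}v^{\Delta}[\mu](t,y))$ and $J(t,x)=\det D_{x}\Phi^{\Delta}[\mu](0,t,x)$. Since $v^{\Delta}[\mu]$ is smooth in $x$ (it is a mollification of a Lipschitz function), the flow $\Phi^{\Delta}[\mu](0,t,\cdot)$ is a $C^{1}$-diffeomorphism, $J(t,x)>0$ for all $(t,x)$, and Liouville's formula yields
\be
J(t,x)=\exp\!\bigg(\int_{0}^{t}(\diver b)(s,\Phi^{\Delta}[\mu](0,s,x))\,ds\bigg).
\ee
A standard change of variables in~\eqref{representation_formula_continuity_equation} then gives
\be
\|m^{\Delta}[\mu](t,\cdot)\|_{L^{\mathsf{p}}(\RR^{d})}^{\mathsf{p}}=\int_{\RR^{d}}\frac{m_{0}^{*}(x)^{\mathsf{p}}}{J(t,x)^{\mathsf{p}-1}}\,dx,
\ee
so \eqref{uniform_bound_lp_continuous_case} will follow once $J(t,x)$ is bounded below uniformly in $(t,x,\mu,\Delta)$ under the assumption $\Delta x\leq c\eps^{2}$.

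The core of the argument is the lower bound $\diver b\geq -M$ with $M$ independent of $(\mu,\Delta)$. Differentiating $b$ gives
\be
\diver b(t,y)=-\tr\!\bigl(D^{2}_{xp}H(y,D_{x}v^{\Delta}[\mu](t,y))\bigr)-\tr\!\bigl(D^{2}_{pp}H(y,D_{x}v^{\Delta}[\mu](t,y))\,D^{2}_{x}v^{\Delta}[\mu](t,y)\bigr).
\ee
Proposition~\ref{prop:value_function_discrete_eps}\eqref{prop:value_function_discrete_eps_ii} bounds $D_{x}v^{\Delta}[\mu]$ uniformly, so $(y,D_{x}v^{\Delta}[\mu](t,y))$ stays in $\RR^{d}\times\ov{B}(0,\widetilde C_{\text{{\rm Lip}}})$; from~\eqref{e:derivative_p_H} and the implicit function theorem applied to the identity $D_{a}L(x,D_{p}H(x,p))=p$ one obtains $D^{2}_{pp}H=[D^{2}_{aa}L(x,D_{p}H)]^{-1}$, which together with~\eqref{h:L_strong_convexity} gives $0\leq D^{2}_{pp}H\leq C_{L,4}^{-1}I$ uniformly, and a similar computation for $D^{2}_{xp}H$ together with~\eqref{eq:D_P_H_linear_growth} and the $C^{2}$-regularity of $L$ on the bounded set of relevant $a$-values yields a uniform bound $|\tr D^{2}_{xp}H|\leq M_{1}$. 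For the second trace, the condition $\Delta x\leq c\eps^{2}$ and Proposition~\ref{prop:bound_hessian_semiconcavity_of_v} give $D^{2}_{x}v^{\Delta}[\mu](t,y)\leq K\,I$ with $K=\widetilde C_{\text{{\rm hb}}}(1+c^{2})$; since $KI-D^{2}_{x}v^{\Delta}[\mu]\geq 0$ and $D^{2}_{pp}H\geq 0$ are both symmetric positive semi-definite, their product has nonnegative trace, whence
\be
\tr\!\bigl(D^{2}_{pp}H\cdot D^{2}_{x}v^{\Delta}[\mu]\bigr)\leq K\,\tr(D^{2}_{pp}H)\leq K\,d/C_{L,4}.
\ee
Combining these gives $\diver b\geq -M$, hence $J(t,x)\geq e^{-MT}$, and the change-of-variables identity yields~\eqref{uniform_bound_lp_continuous_case} with $\tilde c=e^{M(\mathsf{p}-1)T/\mathsf{p}}$.

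The principal obstacle is the second-derivative term $\tr(D^{2}_{pp}H\cdot D^{2}_{x}v^{\Delta}[\mu])$: if one only invoked the mollification estimate~\eqref{eq:bounds_higher_order_derivatives_eps}, which yields $\|D^{2}_{x}v^{\Delta}[\mu]\|_{\infty}=O(\eps^{-1})$, this trace could not be controlled uniformly in $\Delta$ and the lower bound on $J$ would degenerate. It is precisely the one-sided character of semi-concavity, captured quantitatively by Proposition~\ref{prop:bound_hessian_semiconcavity_of_v} under the compatibility condition $\Delta x\leq c\eps^{2}$, together with the positive semi-definiteness of $D^{2}_{pp}H$ coming from the convexity of $H$ in $p$, that allows this term to be bounded from above even though $D^{2}_{x}v^{\Delta}[\mu]$ is not bounded from below; this is the crucial ingredient that is absent from the schemes of~\cite{MR3148086,Chowdhury_et_al_2022} in dimension $d>1$.
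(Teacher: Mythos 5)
Your proof is correct and follows essentially the same route as the paper: the paper's own proof consists of invoking Proposition~\ref{prop:bound_hessian_semiconcavity_of_v} to obtain the uniform one-sided Hessian bound under $\Delta x\leq c\eps^{2}$ and then citing \cite[Proposition~4.1]{FischerSilva} for precisely the Liouville-formula/Jacobian argument you carry out in detail, and the same trace inequality ($\tr(AB)\geq 0$ for symmetric positive semidefinite $A$, $B$) appears explicitly in the paper's proof of the discrete analogue, Proposition~\ref{prop:stability}. Two small points to tidy up: the uniform bound on $\tr\big(D^{2}_{xp}H\big)$ requires restricting $x$ to a fixed compact set (available because $\supp(m_{0}^{*})$ is compact and the velocity field is uniformly bounded, cf.~\eqref{eq:bounded_flow} and~\eqref{compact_support_continuous_case}), since the hypotheses do not bound $D^{2}_{ax}L$ globally in $x$; and the case $\mathsf{p}=\infty$ is not covered by your change-of-variables identity and should be obtained separately, e.g.\ by letting $\mathsf{p}'\to\infty$ in the estimate, as the paper does at the end of the proof of Proposition~\ref{prop:stability}.
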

\begin{proof}
Let $c>0$ and suppose that $\Delta x\leq c\eps^{2}$. Proposition~\ref{prop:bound_hessian_semiconcavity_of_v} implies that 
\be
\Big\langle D_{x}^{2}v^{\Delta}[\mu](t,x)y,y\Big\rangle\leq \widetilde{C}_{\text{{\rm hb}}}\Bigg(1 +c^{2}\Bigg)|y|^{2}
\quad\text{for all }t\in[0,T],\,x,\,y\in \RR^{d}.
\ee
Using this inequality, the proof of~\eqref{uniform_bound_lp_continuous_case} follows from exactly the same arguments than those in the proof of~\cite[Proposition 4.1]{FischerSilva}. 
\end{proof}
\begin{proposition}
\label{prop:stability_cas_continu_II} 
Assume {\bf(H\ref{h:h1})}-{\bf(H\ref{h:initial_condition})}, let 
$(\mu_{n})_{n\in \NN}\subset C([0,T];\P_1(\RR^d))$, and let
$\big((\Delta t_n, \Delta x_n,\eps_n)\big)_{n\in\NN}\subset (0,\infty)^3$. Set $\Delta_n=(\Delta t_n,\Delta x_n,\eps_n)$ and let $\mu\in C([0,T];\P_1(\RR^d))$. Suppose that, as $n\to\infty$, $\mu_n\to \mu$, $\Delta_n\to 0$, $(\Delta x_n)^2/\Delta t_{n}\to 0$, and $\Delta x_n=O(\eps_n^2)$. Then, up to some subsequence, the following hold:
\begin{enumerate}[{\rm(i)}]
\item
\label{prop:stability_cas_continu_II_i}
$(v^{\Delta_{n}}[\mu^n])_{n\in\NN}$ converges to $v[\mu]$, uniformly over compact subsets of $[0,T]\times\RR^{d}$, and, for every $K\subset [0,T]\times\RR^d$ compact and $q\in[1,\infty[$, $(\mathbb{I}_{K}D_{x}v^{\Delta_{n}}[\mu_n])_{n\in\NN}$ converges to $\mathbb{I}_{K}D_{x}v[\mu]$ in $L^{q}([0,T]\times\RR^d)$. 
\item
\label{prop:stability_cas_continu_II_ii}
 $\big(m^{\Delta_n}[\mu_n]\big)_{n\in \NN}$ converges in $C([0,T];\P_1(\RR^d))$ towards a solution to~\eqref{limit_continuity_equation_H}. Moreover, the convergence also hold weakly in $L^{\mathsf{p}}([0,T]\times\RR^d)$, if $\mathsf{p}<\infty$, and weakly$^*$ in $L^{\infty}([0,T]\times\RR^d)$, if $\mathsf{p}=\infty$.
\end{enumerate}
\end{proposition}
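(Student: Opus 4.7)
The plan is to treat (i) as an immediate corollary and focus the effort on (ii). For (i), the hypothesis $\Delta x_n = O(\eps_n^2)$ implies in particular $\Delta x_n/\eps_n\to 0$, so both assertions of Proposition~\ref{prop:convergence_v_eps} apply directly and yield the claimed uniform convergence of $v^{\Delta_n}[\mu_n]$ on compacts, together with the $L^q$ convergence of the gradient term on any fixed compact set.

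For (ii), I would proceed in three stages: narrow compactness, weak $L^{\mathsf{p}}$ extraction, and passage to the limit in the weak formulation. First, I would combine the uniform compact support bound~\eqref{compact_support_continuous_case} with the Lipschitz-in-time estimate~\eqref{equicontinuity_continuous_time}. The former gives precompactness of $\{m^{\Delta_n}[\mu_n](t)\}$ in $(\P_1(\RR^d), d_1)$ at each $t$ (since all measures are supported in a common compact ball, tightness and uniform integrability of first moments are immediate), and the latter provides uniform equicontinuity. A standard Arzelà-Ascoli argument in $C([0,T];\P_1(\RR^d))$ then yields, along a subsequence, a limit $m\in C([0,T];\P_1(\RR^d))$ whose supports lie uniformly in $\ov{B}(0,C^* + TC_{\text{bf}})$. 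Second, Proposition~\ref{prop:stability_cas_continu_I} (applicable thanks to $\Delta x_n=O(\eps_n^2)$) gives a uniform bound in $L^{\mathsf{p}}([0,T]\times\RR^d)$; Banach-Alaoglu then yields a further subsequence converging weakly in $L^{\mathsf{p}}$, or weakly$^*$ in $L^{\infty}$ if $\mathsf{p}=\infty$. By testing against functions in $C_c([0,T]\times\RR^d)$, this weak limit must coincide with the narrow limit $m$, so $m\in L^{\mathsf{p}}([0,T]\times\RR^d)$ and the weak convergence claim is established.

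The core step is to pass to the limit in the weak formulation~\eqref{solution_distributional_sense} written for $m^{\Delta_n}[\mu_n]$ and $v^{\Delta_n}[\mu_n]$, for arbitrary $\varphi\in C_0^{\infty}(\RR^d)$ and $t\in[0,T]$. The linear terms $\int\varphi\, m^{\Delta_n}[\mu_n](t)\dd x$ and the initial term converge by narrow convergence. For the nonlinear term
\be
\int_0^t\int_{\RR^d}\langle D_pH(x,D_{x}v^{\Delta_n}[\mu_n]),D\varphi(x)\rangle m^{\Delta_n}[\mu_n](s,x)\dd x\dd s,
\ee
the uniform compact support of $m^{\Delta_n}[\mu_n]$, together with $\supp D\varphi$, confines everything to a fixed compact $K\subset[0,T]\times\RR^d$. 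On $K$, part (i) combined with~\eqref{eq:D_P_H_linear_growth} and the $C^{2}$ regularity of $H$ yields strong $L^{q}$ convergence of $\mathbb{I}_K D_pH(x,D_xv^{\Delta_n}[\mu_n])D\varphi$ for every $q\in[1,\infty[$. Choosing $q$ to be the Hölder conjugate of $\mathsf{p}$, the weak-strong product principle in $L^{\mathsf{p}}\times L^{q}$ (adapted for the weak$^*$ case when $\mathsf{p}=\infty$, since $D_pH(\cdot,D_xv^{\Delta_n}[\mu_n])D\varphi$ converges in $L^{1}(K)$) delivers the required convergence of the nonlinear term to its continuous counterpart with $m$ and $v[\mu]$, showing that $m$ solves~\eqref{limit_continuity_equation_H}.

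The main obstacle is precisely this limit passage in the nonlinear term: $m^{\Delta_n}[\mu_n]$ only converges weakly, while $D_pH$ is nonlinear in its second argument and $D_xv^{\Delta_n}[\mu_n]$ is only a measurable selection. It is the almost-everywhere (and hence $L^q$ on compacts) convergence of $D_xv^{\Delta_n}[\mu_n]$, supplied by Proposition~\ref{prop:convergence_v_eps}(ii), that rescues the argument; this in turn relies essentially on the one-sided Lipschitz-type estimate~\eqref{eq:osl_condition} for $D_xv^{\Delta}$ and the condition $\Delta x_n/\eps_n\to 0$.
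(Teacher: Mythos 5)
Your proposal is correct and follows essentially the same route as the paper: part (i) is obtained directly from Proposition~\ref{prop:convergence_v_eps} (noting $\Delta x_n=O(\eps_n^2)$ gives $\Delta x_n/\eps_n\to 0$), and part (ii) combines the uniform compact support~\eqref{compact_support_continuous_case}, the equicontinuity estimate~\eqref{equicontinuity_continuous_time}, and Arzel\`a--Ascoli for narrow compactness, Proposition~\ref{prop:stability_cas_continu_I} for the weak $L^{\mathsf{p}}$ (resp.\ weak$^*$ $L^{\infty}$) extraction, and the strong $L^{q}$ convergence of the velocity field from part (i) to pass to the limit in the weak formulation. Your write-up merely makes explicit the weak--strong pairing that the paper's terser proof leaves implicit; no gap.
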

\begin{proof}  For every $n\in\NN$, let us set $v^{n}=v^{\Delta_n}[\mu_n]$ and $m^{n}=m^{\Delta_n}[\mu_n]$.

\eqref{prop:stability_cas_continu_II_i}: This follows from Proposition~\ref{prop:convergence_v_eps}.

\eqref{prop:stability_cas_continu_II_ii}: It follows from~\eqref{compact_support_continuous_case},~\cite[Proposition~7.1.5]{Ambrosiogiglisav}, \eqref{equicontinuity_continuous_time}, and the Ascoli-Arzel\`a theorem, that there exists $m^*\in C([0,T];\P_1(\RR^d))$ such that, as $n\to\infty$ and up to some subsequence, $m^{n}\to m^*$ in $C([0,T];\P_1(\RR^d))$. By Proposition~\ref{prop:stability_cas_continu_I}, the convergence also hold weakly in $L^{\mathsf{p}}([0,T]\times \RR^d)$, if $\mathsf{p}<\infty$, and weakly$^*$ in $L^{\infty}([0,T]\times \RR^d)$, if $\mathsf{p}=\infty$. 
Since $m^{n}$ solves~\eqref{continuity_equation_H}, for every $t\in [0,T]$, and $\varphi\in C^{\infty}_{0}(\RR^d)$, we have 
\be
\label{weak_form_continuity_v_eps}
\ba{rcl}
\ds\int_{\RR^d}\varphi(x)m^{n}(t,x)\dd x&=&
\ds\int_{\RR^d}\varphi(x)m_0^{*}(t,x)\dd x\\[10pt]
\;&\;&\ds+\int_{0}^{t}\int_{\RR^d}\Big\langle D\varphi(x),D_{p}H\big(x,D_{x}v^{n}(s,x)\big)\Big\rangle m^{n}(s,x)\dd x\dd s.
\ea
\ee
Thus, by~\eqref{prop:stability_cas_continu_II_i}, we can pass to the limit in the previous expression to  deduce that $m^*$ solves \eqref{limit_continuity_equation_H}. 
\end{proof}
\subsection{The Lagrange-Galerkin approximation} The main purpose of the this section is to provide some results in the vein of Propositions~\ref{prop:stability_cas_continu_I} and~\ref{prop:stability_cas_continu_II} for solutions $\mathsf{m}^{\Delta}[\mu]$ to a LG approximation of \eqref{continuity_equation_H} that we proceed to construct. 

Let $\mu\in C([0,T];\P_1(\RR^d))$ and let $\Delta=(\Delta t, \Delta x,\eps)\in ]0,\infty[^3$. For every  $k\in \I_{\Delta t}^{*}$ and $x\in\RR^d$, let $\mathsf{\Phi}_{k}^{\Delta}[\mu](x)$ be the explicit Euler approximation of $\Phi^{\Delta}[\mu](t_{k},t_{k+1},x)$, i.e. 
\be
\label{Phi_mu_eps_k}
\mathsf{\Phi}_{k}^{\Delta}[\mu](x) = x- \Delta t D_{p}H\big(x,D_{x}v^{\Delta}[\mu](t_k,x)\big)\quad\mbox{for all }x\in\RR^d.
\ee
As in~\cite{camsilva12}, we consider the following semi-discrete approximation of \eqref{representation_formula_continuity_equation_two_times}:
\begin{equation}
\ba{rcl}
m_{k+1}&=&\mathsf{\Phi}_{k}^{\Delta}[\mu]\sharp m_{k}\quad
\text{for all }k\in\I_{\Delta t}^{*},\\[4pt]
m_{0}&=&m_{0}^{*},
\ea
\end{equation}
or, equivalently, for every $k\in\I_{\Delta t}^{*}$ and every Borel function $\varphi\colon\RR^d\to \RR$, such that $\varphi\left(\mathsf{\Phi}_{k}^{\Delta}[\mu](\cdot)\right)$ is integrable with respect to $m_k$,   
\be\label{abstract_semi_discrete}
\int_{\RR^d}\varphi(x)\dd m_{k+1}(x)=\int_{\RR^d}\varphi\left(\mathsf{\Phi}_{k}^{\Delta}[\mu](x)\right)\dd m_{k}(x).
\ee
Following~\cite{morton}, which mainly deals with a LG approximation of the dual (or transport) equation associated to  \eqref{continuity_equation_H}, let us formaly deduce from \eqref{abstract_semi_discrete} a time-space approximation of \eqref{continuity_equation_H}. For every $i\in \ZZ^d$, set 
\be 
\label{d:e_i}
E_{i}=\{x\in \RR^d\,|\,|x-x_i|_{\infty}\leq \Delta x/2\}
\ee
and define
\be
\label{discretization_initial_condition}
m_{0,i}=\frac{1}{(\Delta x)^d}\int_{E_{i}}m_{0}^*(x)\dd x.
\ee
Given the regular mesh defined by $\{E_{i}\,|\,i\in\ZZ^d\}$, let $\{\beta_i\,|\,i\in\ZZ^d\}$ be a finite element basis. In the following, we look for an approximation $\mathsf{m}^{\Delta}[\mu]$ of the solution $m^{\Delta}[\mu]$ to \eqref{continuity_equation_H} such that  
\be
\label{d:lg_discrete_times}
\mathsf{m}^{\Delta}[\mu](t_k,x)=\sum_{j\in\ZZ^d}m_{k,j}\beta_j(x)\quad
\mbox{for all }k\in\I_{\Delta t},\,x\in \RR^d,
\ee
for some  constants  $\{m_{k,j}\,|\,k\in\I_{\Dt},\,j\in\ZZ^d\}$. In order to determine the latter, we  replace  $m_k$ and $m_{k+1}$  in \eqref{abstract_semi_discrete} by $\mathsf{m}^{\Delta}(t_k,\cdot)$ and $\mathsf{m}^{\Delta}(t_{k+1},\cdot)$, respectively, and, given $i\in\ZZ^{d}$, we take $\varphi=\beta_{i}$ to obtain the following equations  
\be
\label{weak_scheme_11}
\sum_{j\in\ZZ^d}m_{k+1,j}\int_{\RR^d}\beta_{j}(x)\beta_{i}(x)\dd x
=\sum_{j\in\ZZ^d}m_{k,j}\int_{\RR^d}\beta_{i}(\mathsf{\Phi}_{k}^{\Delta}[\mu](x))
\beta_{j}(x)\dd x\quad\text{for all }k\in\I_{\Dt}^*.
\ee 
In the context of second-order Fokker-Planck equations, scheme~\eqref{weak_scheme_11} has already been considered in~\cite{Calzola_Carlini_Silva_lg_second_order} to provide a high-order accurate LG scheme to solve second-order mean field games problems with smooth solutions. In this reference, the authors consider symmetric Lagrangian basis of odd order which preserve the mass but not the positivity of the initial condition $\{m_{0,i}\,|\,i\in\ZZ^d\}$. Since we aim to approximate solutions to~\eqref{continuity_equation_H}, which in general are not smooth, from now on we take $\beta_{i}=\beta_{i}^{0}:=\mathbb{I}_{E_{i}}$ for all $i\in\ZZ^d$. Under this choice,~\eqref{weak_scheme_11} and~\eqref{discretization_initial_condition} yield the following LG scheme for~\eqref{continuity_equation_H}:
\begin{align}
\ds m_{k+1,i}&=\frac{1}{(\Delta x)^d}\sum_{j\in\ZZ^{d}}m_{k,j}\int_{E_j}\beta_i^{0}(\mathsf{\Phi}_{k}^{\Delta}[\mu](x))\dd x\quad\text{for all }k\in\I_{\Delta t}^{*},\,i\in \ZZ^{d},
\label{lg_scheme}\\ 
m_{0,i}&=\frac{1}{(\Delta x)^d}\int_{E_{i}}m_{0}^{*}(x)\dd x\quad\text{for all }i\in\ZZ^{d}. 
\label{lg_scheme_initial_condition}
\end{align}
The scheme above is explicit and hence admits a unique solution. Interestingly, the scheme \eqref{lg_scheme}-\eqref{lg_scheme_initial_condition} coincides with the one proposed in~\cite{MR2771664} (see also~\cite{MR2826759}) to approximate solutions to continuity equations. Indeed, we have
\be
\int_{E_j}\beta_i^{0}(\mathsf{\Phi}_{k}^{\Delta}[\mu](x))\dd x=\int_{\RR^d}\mathbb{I}_{E_{j}\cap \mathsf{\Phi}_{k}^{\Delta}[\mu]^{-1}(E_{i})}(x)\dd x=\L^{d}\bigg(E_{j}\cap \mathsf{\Phi}_{k}^{\Delta}[\mu]^{-1}(E_{i})\bigg),
\ee
where $\L^{d}$ denotes the Lebesgue measure in $\RR^{d}$. Plugging this expression in~\eqref{lg_scheme} yields the scheme in~\cite[Section~2.2]{MR2771664}. Notice that our main results for solutions to~\eqref{lg_scheme}-\eqref{lg_scheme_initial_condition}, contained in Propositions~\ref{prop:stability} and~\ref{prop:stability_w_r_t_mu_n} below, do not follow from the results in~\cite{MR2771664,MR2826759}. Therefore, the analysis in this section provides a complementary study to the one in~\cite{MR2771664,MR2826759} for the approximation~\eqref{lg_scheme}-\eqref{lg_scheme_initial_condition} of continuity equations. 
\subsection{Properties of LG scheme}
\label{sec:prop_lg_scheme}
We begin with a preliminary result stating that the solution to~\eqref{lg_scheme}-\eqref{lg_scheme_initial_condition} is supported on a compact set, which is independent of the discretization parameters provided that $\Delta x$ is of the order of $\Delta t$.
\begin{lemma}
\label{l:uniform_compact_support_discrete_solution} 
Assume {\bf(H\ref{h:h1})}-{\bf(H\ref{h:initial_condition})}, let $\mu\in C([0,T];\P_1(\RR^d))$, let $\Delta=(\Delta t,\Delta x,\eps)\in ]0,\infty[^3$, and let 
$\{m_{k,i}\,|\,k\in\I_{\Delta t},\,i\in\ZZ^d\}$ be the solution to~\eqref{lg_scheme}-\eqref{lg_scheme_initial_condition}. Then for every $c>0$ there exists $\widetilde{C}^{*}>0$, independent of $\mu$, such that, if $\Delta x\leq c\Delta t$, for every $k\in\I_{\Delta t}$ we have $m_{k,i}=0$ if $x_i\notin\ov{B}_{\infty}(0,\widetilde{C}^{*})$.
\end{lemma}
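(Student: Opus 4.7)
The plan is to propagate the support bound inductively in $k$, exploiting the fact that the discrete characteristic $\mathsf{\Phi}_k^{\Delta}[\mu]$ moves every point by at most a distance proportional to $\Delta t$, so that in $N=T/\Delta t$ steps the initial support can only grow by an amount controlled by $T$ (plus a small error coming from the width of the cells $E_j$). The only global input needed is the uniform bound \eqref{eq:bounded_flow} on the drift $D_pH(x,D_xv^{\Delta}[\mu](t_k,x))$, which is already available independently of $\mu$ and $\Delta$.

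First I would treat the base case $k=0$. By \eqref{lg_scheme_initial_condition}, if $m_{0,i}\neq 0$ then $E_i\cap\supp(m_0^*)\neq\emptyset$, so by {\bf(H\ref{h:initial_condition})}(i) one has $|x_i|_{\infty}\leq C^*+\Delta x/2$. Next I would prove the one-step expansion estimate: if $m_{k+1,i}\neq 0$, then by \eqref{lg_scheme} there is some $j\in\ZZ^d$ with $m_{k,j}\neq 0$ and
\begin{equation*}
\mathcal{L}^{d}\bigl(E_j\cap\mathsf{\Phi}_k^{\Delta}[\mu]^{-1}(E_i)\bigr)>0,
\end{equation*}
so there exists $x\in E_j$ with $\mathsf{\Phi}_k^{\Delta}[\mu](x)\in E_i$. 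Combining $|x-x_j|_{\infty}\leq\Delta x/2$, $|\mathsf{\Phi}_k^{\Delta}[\mu](x)-x_i|_{\infty}\leq\Delta x/2$ and the Lipschitz-in-time estimate
\begin{equation*}
\bigl|\mathsf{\Phi}_k^{\Delta}[\mu](x)-x\bigr|_{\infty}\leq\bigl|\mathsf{\Phi}_k^{\Delta}[\mu](x)-x\bigr|\leq\Delta t\,C_{\text{{\rm bf}}},
\end{equation*}
which follows from \eqref{Phi_mu_eps_k} and \eqref{eq:bounded_flow}, yields $|x_i-x_j|_{\infty}\leq\Delta x+\Delta t\,C_{\text{{\rm bf}}}$.

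Iterating this step, any index $i_k$ with $m_{k,i_k}\neq 0$ is linked to some $i_0$ with $m_{0,i_0}\neq 0$ by a chain of length $k$ with each consecutive pair at $|\cdot|_{\infty}$-distance at most $\Delta x+\Delta t\,C_{\text{{\rm bf}}}$, hence
\begin{equation*}
|x_{i_k}|_{\infty}\leq C^*+\tfrac{\Delta x}{2}+k\bigl(\Delta x+\Delta t\,C_{\text{{\rm bf}}}\bigr).
\end{equation*}
Since $k\leq N=T/\Delta t$ and $\Delta x\leq c\Delta t$, one has $k\Delta x\leq cT$, $k\Delta t\leq T$, and $\Delta x/2\leq cT/2$, so
\begin{equation*}
|x_{i_k}|_{\infty}\leq C^*+\tfrac{3}{2}\,cT+T\,C_{\text{{\rm bf}}}=:\widetilde{C}^{*},
\end{equation*}
a constant depending only on $c$, $T$, $C^*$ and $C_{\text{{\rm bf}}}$, hence independent of $\mu$. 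There is no real conceptual obstacle; the only thing to watch is to track the $\Delta x/2$ terms coming from the cell-radius so that the final bound depends only on $c$, $T$ and the data-dependent constants, and to invoke the $\mu$-independence of $C_{\text{{\rm bf}}}$ via \eqref{eq:D_P_H_linear_growth} and Proposition~\ref{prop:value_function_discrete_eps}\eqref{prop:value_function_discrete_eps_ii}.
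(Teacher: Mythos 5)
Your proof is correct and follows essentially the same route as the paper: both arguments propagate a bound on the support radius one time step at a time, using the uniform drift bound \eqref{eq:bounded_flow} to control the displacement of the discrete characteristics and the cell width to account for the spreading caused by the basis functions, then sum over $k\leq N=T/\Delta t$ using $\Delta x\leq c\Delta t$. Your constant $\widetilde{C}^{*}=C^{*}+\tfrac{3}{2}cT+TC_{\text{{\rm bf}}}$ differs slightly from the paper's, but this is immaterial since only existence of some $\mu$-independent constant is required.
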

\begin{proof} Let $c>0$ and suppose that $\Delta x\leq c\Delta t$. For every $k\in\I_{\Delta t}^{*}$, set $r_{k}=\sup\{|x_{i}|_{\infty}\,|\,m_{k,i}\neq 0,\,i\in\ZZ^d\}\in[0,\infty]$. By~\eqref{eq:bounded_flow},~\eqref{lg_scheme}, and~{\bf(H\ref{h:initial_condition})}{\rm(i)}, we have
\begin{equation}
r_{k+1}\leq r_{k}+\Delta t C_{\text{{\rm bf}}}+\frac{\Delta x}{2}\leq r_{k}+\Delta t\bigg(C_{\text{{\rm bf}}}+\frac{c}{2}\bigg)\leq C^{*}+\Delta x N_{\Delta t}\Delta t\bigg(C_{\text{{\rm bf}}}+\frac{c}{2}\bigg)= C^{*}+T\bigg(C_{\text{{\rm bf}}}+\frac{c}{2}\bigg),
\end{equation}
for all $k\in\I_{\Delta t}^{*}$. The result follows by letting $\widetilde{C}^{*}=C^{*}+T(C_{\text{{\rm bf}}}+c/2)$.
\end{proof}
Let $\mu\in C([0,T];\P_1(\RR^d))$ and let $\Delta=(\Delta t, \Delta x,\eps)\in ]0,\infty[^3$. As a consequence of the previous result, in~\eqref{lg_scheme} it suffices to compute $m_{i,k+1}$ for $i\in\I_{\Delta x}$, where 
\be 
\label{d:i_delta_x}
\I_{\Delta x}:=\big\{i \in \ZZ^d\,|\,x_i\in\ov{B}_{\infty}(0,\widetilde{C}^{*})\big\}.
\ee

Given the constants $\{m_{k,i}\,|\,k\in\I_{\Delta t},\, i\in\ZZ^d\}$, computed 
with~\eqref{lg_scheme}-\eqref{lg_scheme_initial_condition}, we extend 
$\mathsf{m}^{\Delta}[\mu]$, given by~\eqref{d:lg_discrete_times}, to $[0,T]\times\RR^d$ as follows:   
\begin{multline}
\label{m_delta_in_t}
\mathsf{m}^{\Delta}[\mu](t,x)=\left(\frac{t_{k+1}-t}{\Delta t}\right)
\mathsf{m}^{\Delta}[\mu](t_k,x)+\left(\frac{t-t_{k}}{\Delta t}\right)
\mathsf{m}^{\Delta}[\mu](t_{k+1},x)\\
\text{for all }k\in\I_{\Delta t}^{*},
\,t\in[t_{k},t_{k+1}),\,x\in\RR^d. 
\end{multline}
In the following proposition, we state, for later use, some simple properties of the solution to~\eqref{lg_scheme}-\eqref{lg_scheme_initial_condition}.
\begin{proposition} 
\label{prop:basic_properties_lg}
Assume {\bf(H\ref{h:h1})}-{\bf(H\ref{h:initial_condition})}, let $\mu \in C([0,T];\P_1(\RR^d))$, let $\Delta=(\Delta t,\Delta x,\eps)\in ]0,\infty[^3$, and let
$\{m_{k,i}\,|\,k\in\I_{\Delta t},\,i\in\ZZ^d\}$ be the solution to~\eqref{lg_scheme}-\eqref{lg_scheme_initial_condition}. Then the following hold:
\begin{enumerate}[{\rm(i)}]
\item 
\label{prop:basic_properties_lg_i}
$\mathsf{m}^{\Delta}[\mu](t,x)\geq 0$ for all $t\in[0,T]$, $x\in\RR^d$.
\item 
\label{prop:basic_properties_lg_ii}
Let $c>0$. If $\Delta x\leq c\Delta t$ and $\widetilde{C}^{*}>0$ is given by Lemma~\ref{l:uniform_compact_support_discrete_solution}, we have
\be 
\label{l:uniform_compact_support_extended_discrete_solution}
\supp\big(\mathsf{m}^{\Delta}[\mu](t,\cdot)\big)\subseteq\ov{B}_{\infty}(0,\widetilde{C}^{*})\quad\text{for all }t\in[0,T]. 
\ee
\item
\label{prop:basic_properties_lg_iii} 
Let $a=(a_{i})_{i\in \ZZ^d}\subset\RR$ and set $\varphi_{a}(x)=\sum_{i \in \ZZ^d}a_i\beta_{i}^{0}(x)$ for all $x\in \RR^d$. Then we have
\be
\label{weak_scheme_weak_form}
\int_{\RR^d}\varphi_{a}(x)\mathsf{m}^{\Delta}[\mu](t_{k+1},x)\dd x
=\int_{\RR^d}\varphi_{a}(\mathsf{\Phi}_{k}^{\Delta}[\mu](x))\mathsf{m}^{\Delta}[\mu](t_{k},x)\dd x\quad\text{for all }k\in\I_{\Delta t}^{*}.
\ee
\item
\label{prop:basic_properties_lg_iv}
$\int_{\RR^d}\mathsf{m}^{\Delta}[\mu](t,x)\dd x=1$ for all $t\in[0,T]$.
\end{enumerate}
\end{proposition}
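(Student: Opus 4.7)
The proof is structured around four essentially independent verifications, each short, with (iii) serving as the algebraic heart from which (iv) follows as a corollary.

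For (i), I would argue by induction on $k\in\I_{\Delta t}$. The base case $k=0$ is immediate since $m_0^*\geq 0$ and hence $m_{0,i}\geq 0$ by~\eqref{lg_scheme_initial_condition}. Assuming $m_{k,j}\geq 0$ for all $j\in\ZZ^d$, the integrand $\beta_i^0(\mathsf{\Phi}_k^{\Delta}[\mu](x))=\mathbb{I}_{E_i}(\mathsf{\Phi}_k^{\Delta}[\mu](x))$ appearing in~\eqref{lg_scheme} is nonnegative, so $m_{k+1,i}\geq 0$. Since $\beta_j^0=\mathbb{I}_{E_j}\geq 0$, the expression~\eqref{d:lg_discrete_times} is a nonnegative combination, and the time interpolation~\eqref{m_delta_in_t} is a convex combination, which preserves nonnegativity.

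For (ii), this is a direct consequence of Lemma~\ref{l:uniform_compact_support_discrete_solution}: if $\Delta x\leq c\Delta t$, then $m_{k,i}=0$ whenever $x_i\notin\ov{B}_{\infty}(0,\widetilde{C}^*)$, so by~\eqref{d:lg_discrete_times} the support of $\mathsf{m}^{\Delta}[\mu](t_k,\cdot)$ is contained in $\cup_{i\in\I_{\Delta x}}E_i\subset\ov{B}_{\infty}(0,\widetilde{C}^*)$ (up to adjusting $\widetilde{C}^*$ by at most $\Delta x/2$, which is already built into the proof of the lemma). The linear-in-time interpolation~\eqref{m_delta_in_t} combines two functions supported in this ball, so the inclusion persists for all $t\in[0,T]$.

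For (iii), I would simply compute both sides of~\eqref{weak_scheme_weak_form} by expanding in the basis $\{\beta_i^0\}$. Since the cubes $\{E_i\}$ are pairwise disjoint up to a null set, one has $\beta_i^0\beta_j^0=\delta_{ij}\beta_i^0$ and $\int_{\RR^d}\beta_i^0(x)\dd x=(\Delta x)^d$, so the left-hand side reduces to $(\Delta x)^{d}\sum_{i\in\ZZ^d}a_i m_{k+1,i}$. The right-hand side, after interchanging sums and integral, equals $\sum_{j\in\ZZ^d}m_{k,j}\int_{E_j}\varphi_a(\mathsf{\Phi}_k^{\Delta}[\mu](x))\dd x$. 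Multiplying~\eqref{lg_scheme} by $a_i (\Delta x)^d$ and summing over $i$ produces exactly this identity, using $\sum_{i}a_i\beta_i^0(y)=\varphi_a(y)$ pointwise (the sums being finite by part (ii) and Lemma~\ref{l:uniform_compact_support_discrete_solution}, provided one restricts $a$ to be eventually zero, or reorders summation legitimately via nonnegativity once one splits $a_i$ into positive and negative parts).

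For (iv), I would apply (iii) with $a_i=1$ for all $i\in\ZZ^d$. Then $\varphi_a\equiv 1$ on $\RR^d$ (since $\{E_i\}_{i\in\ZZ^d}$ partition $\RR^d$ up to a Lebesgue-null set), and hence~\eqref{weak_scheme_weak_form} becomes $\int_{\RR^d}\mathsf{m}^{\Delta}[\mu](t_{k+1},x)\dd x=\int_{\RR^d}\mathsf{m}^{\Delta}[\mu](t_k,x)\dd x$. By induction this equals $\int_{\RR^d}\mathsf{m}^{\Delta}[\mu](0,x)\dd x=\sum_{i\in\ZZ^d}m_{0,i}(\Delta x)^d=\int_{\RR^d}m_0^*(x)\dd x=1$ by~\eqref{lg_scheme_initial_condition} and $m_0^*\in\P(\RR^d)$. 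For $t\in[t_k,t_{k+1}[$, the formula~\eqref{m_delta_in_t} gives $\int_{\RR^d}\mathsf{m}^{\Delta}[\mu](t,x)\dd x$ as a convex combination of two integrals equal to $1$, hence equal to $1$. There is no real obstacle here; the only point requiring slight care is the interchange of sum and integral in (iii), which is justified by the fact that, thanks to part (ii), all sums over $i\in\ZZ^d$ reduce to finite sums over $\I_{\Delta x}$ once $a$ is replaced by its relevant restriction in the computation of the left-hand side of~\eqref{weak_scheme_weak_form}.
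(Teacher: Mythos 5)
Your proposal is correct and follows essentially the same route as the paper's proof: nonnegativity read off directly from the scheme, the support property from Lemma~\ref{l:uniform_compact_support_discrete_solution} together with the time interpolation, identity~\eqref{weak_scheme_weak_form} by expanding both sides in the basis $\{\beta_i^0\}$ and using $\int_{\RR^d}\beta_i^0\beta_j^0\,\dd x=\delta_{ij}(\Delta x)^d$, and {\rm(iv)} by taking $a_i\equiv 1$ in {\rm(iii)}. Your extra care about the interchange of summation (finiteness of the sums via the compact support) is exactly the justification the paper invokes, so there is nothing to add.
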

\begin{proof}

\eqref{prop:basic_properties_lg_i}: Using that $m_0^*\geq 0$ and $\beta_i^{0}\geq 0$ for all $i\in\ZZ^d$, this assertion follows directly from~\eqref{lg_scheme}-\eqref{lg_scheme_initial_condition}.

\eqref{prop:basic_properties_lg_ii}: This follows from Lemma~\ref{l:uniform_compact_support_discrete_solution} and~\eqref{m_delta_in_t}.

\eqref{prop:basic_properties_lg_iii}: For every $k\in\I_{\Delta t}^{*}$, by~\eqref{lg_scheme}, we have
\begin{multline}
\int_{\RR^d}\varphi_{a}\mathsf{m}^{\Delta}[\mu](t_{k+1},x)\dd x
=\sum_{i\in\ZZ^{d}}\sum_{j\in\ZZ^d}a_{j}m_{k+1,i}\int_{\RR^d}\beta_{i}^{0}(x)\beta_{j}^{0}(x)\dd x
=\sum_{i\in\ZZ^{d}}a_{i}m_{k+1,i}(\Delta x)^{d}\\
=\sum_{i\in\ZZ^{d}}a_{i}\sum_{j\in\ZZ^{d}}m_{k,j}\int_{E_{j}}\beta_{i}^{0}(\mathsf{\Phi}_{k}^{\Delta}[\mu](x))\dd x 
= \sum_{j\in\ZZ^{d}}m_{k,j}\int_{\RR^d}\varphi_{a}(\mathsf{\Phi}_{k}^{\Delta}[\mu](x))\beta_{j}^{0}(x)\dd x\\
=\int_{\RR^d}\varphi_{a}(\mathsf{\Phi}_{k}^{\Delta}[\mu](x))\mathsf{m}^{\Delta}[\mu](t_{k},x)\dd x. 
\end{multline}
Notice that the changes of the order of summation above are justified by~\eqref{prop:basic_properties_lg_ii}.\smallskip\\
\eqref{prop:basic_properties_lg_iv}: By~\eqref{prop:basic_properties_lg_iii}, with $\varphi_a(x):=\sum_{i\in\ZZ^d}\beta_{i}^{0}(x)=1$, we obtain the result for $t=t_k$, with $k\in\I_{\Delta t}$. The result for every $t\in[0,T]$ follows from~\eqref{m_delta_in_t}.
\end{proof}
In what follows, given $\varphi\colon\RR^d\to\RR$ we set
\be 
\label{d:interpolation_0}
I^{0}[\varphi](x)=\sum_{i\in\ZZ^{d}}\varphi(x_i)\beta_{i}^{0}(x)\quad\text{for all }x\in\RR^d. 
\ee
We will need the following estimate in some of the proofs below.
\begin{lemma}
\label{l:difference_integral_interpolation} 
Assume {\bf(H\ref{h:h1})}-{\bf(H\ref{h:initial_condition})}, let $\mu \in C([0,T];\P_1(\RR^d))$, let $\Delta=(\Delta t,\Delta x,\eps)\in ]0,\infty[^3$, and, given $L>0$, let $\varphi\colon\RR^d\to\RR$ be $L$-Lipschitz. Then, for every $k\in\I_{\Delta t}^{*}$, we have
\be 
\label{e:estimate_int_phi_m_two_steps}
\Bigg|\int_{\RR^d}\varphi(x)\mathsf{m}^{\Delta}[\mu](t_{k+1},x)\dd x-
\int_{\RR^d}\varphi(\mathsf{\Phi}_{k}^{\Delta}[\mu](x))\mathsf{m}^{\Delta}[\mu](t_{k},x)\dd x\Bigg|\leq L\sqrt{d}\Delta x.
\ee
\end{lemma}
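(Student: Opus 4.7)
The plan is to exploit the exact identity in Proposition~\ref{prop:basic_properties_lg}\eqref{prop:basic_properties_lg_iii}, which holds for piecewise constant functions of the form $\varphi_a=\sum_i a_i\beta_i^0$, and to estimate the error incurred by replacing a generic Lipschitz $\varphi$ with its piecewise constant interpolate $I^{0}[\varphi]$ defined in \eqref{d:interpolation_0}. Concretely, I would take $a_i=\varphi(x_i)$ so that $\varphi_a=I^{0}[\varphi]$, apply \eqref{weak_scheme_weak_form}, and then split the left-hand side of \eqref{e:estimate_int_phi_m_two_steps} as
\begin{equation*}
\int_{\RR^d}\bigl(\varphi-I^{0}[\varphi]\bigr)(x)\,\mathsf{m}^{\Delta}[\mu](t_{k+1},x)\dd x
\;+\;\int_{\RR^d}\bigl(I^{0}[\varphi]-\varphi\bigr)\bigl(\mathsf{\Phi}_{k}^{\Delta}[\mu](x)\bigr)\,\mathsf{m}^{\Delta}[\mu](t_{k},x)\dd x.
\end{equation*}

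The next step is to bound $\|\varphi-I^{0}[\varphi]\|_{L^\infty(\RR^d)}$. Since $E_i=\{x\in\RR^d:|x-x_i|_\infty\le\Delta x/2\}$, for any $x\in E_i$ one has $|x-x_i|\le \tfrac{\sqrt d}{2}\Delta x$, whence $|\varphi(x)-\varphi(x_i)|\le \tfrac{L\sqrt d}{2}\Delta x$. Because $I^{0}[\varphi]$ equals $\varphi(x_i)$ on the interior of each $E_i$ and the boundaries form a Lebesgue-null set, this yields $\|\varphi-I^{0}[\varphi]\|_\infty\le \tfrac{L\sqrt d}{2}\Delta x$.

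Combining this pointwise bound with the nonnegativity of $\mathsf{m}^{\Delta}[\mu]$ from Proposition~\ref{prop:basic_properties_lg}\eqref{prop:basic_properties_lg_i} and the mass-conservation identity $\int_{\RR^d}\mathsf{m}^{\Delta}[\mu](t,x)\dd x=1$ from Proposition~\ref{prop:basic_properties_lg}\eqref{prop:basic_properties_lg_iv}, each of the two error terms is bounded by $\tfrac{L\sqrt d}{2}\Delta x$. Adding the two contributions yields the claimed bound $L\sqrt{d}\,\Delta x$.

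There is no real obstacle here: the argument is essentially an $L^\infty$ interpolation estimate combined with the exact transport identity for piecewise constant test functions and the fact that $\mathsf{m}^{\Delta}[\mu](t_k,\cdot)$ is a probability density. The only mild point to check is that the second error term is controlled by $\|\varphi-I^{0}[\varphi]\|_\infty$ \emph{without} any dependence on the map $\mathsf{\Phi}_{k}^{\Delta}[\mu]$, which follows from the uniform $L^\infty$ bound used in conjunction with $\int\mathsf{m}^{\Delta}[\mu](t_k,\cdot)=1$.
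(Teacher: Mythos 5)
Your argument is correct and yields the stated constant, but it is organized genuinely differently from the paper's proof. You invoke the exact transport identity \eqref{weak_scheme_weak_form} of Proposition~\ref{prop:basic_properties_lg}\eqref{prop:basic_properties_lg_iii} as a black box with $\varphi_{a}=I^{0}[\varphi]$, and then control the two interpolation errors $\varphi-I^{0}[\varphi]$ (one tested against $\mathsf{m}^{\Delta}[\mu](t_{k+1},\cdot)$, the other composed with $\mathsf{\Phi}_{k}^{\Delta}[\mu]$ and tested against $\mathsf{m}^{\Delta}[\mu](t_{k},\cdot)$) by the uniform bound $\tfrac{L\sqrt{d}}{2}\Delta x$ together with positivity and unit mass; the paper instead re-expands $m_{k+1,i}$ through the scheme \eqref{lg_scheme} and performs the comparison cell by cell (estimates \eqref{estimation_long_calcul_1}--\eqref{e:estimate_long_calcul_lemma}), replacing $\varphi$ by its cell averages $\tfrac{1}{(\Delta x)^{d}}\int_{E_i}\varphi$ on the $t_{k+1}$ side rather than by $I^{0}[\varphi]$, and only at the end sums the two $\tfrac{L\sqrt{d}}{2}\Delta x$ contributions against $\sum_{j}m_{k,j}(\Delta x)^{d}=1$. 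The ingredients and the final bound are identical; your version is shorter and arguably cleaner because it reuses Proposition~\ref{prop:basic_properties_lg}\eqref{prop:basic_properties_lg_iii} instead of reproving it with the perturbation folded in, at the price of hiding the cell-level structure that the paper's later arguments echo. One caveat, which the paper's own write-up shares: the pointwise estimate $|\varphi(x)-I^{0}[\varphi](x)|\leq\tfrac{L\sqrt{d}}{2}\Delta x$ fails on the overlapping boundaries of the closed cells $E_{i}$, and after composing with $\mathsf{\Phi}_{k}^{\Delta}[\mu]$ one cannot in general discard the preimage of this set as negligible for arbitrary $\Delta$; the clean fix is to take the cells half-open so that $\{\beta_{i}^{0}\}$ is an exact partition of unity and the bound holds everywhere. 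This is a convention issue, not a gap in your reasoning.
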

\begin{proof} Since $\sum_{i\in\ZZ^d}\beta_{i}^{0}(x)=1$ for all $x\in\RR^d$, we have 
\begin{multline}
\label{e:estimate_infty_norm_int_phi_0}
\big|\varphi(x)-I^{0}[\varphi](x)\big|=\bigg|\sum_{i\in\ZZ^d}(\varphi(x)-\varphi(x_i))\beta_{i}^{0}(x)\bigg|\\
\leq \sum_{i\in\ZZ^d}\big|\varphi(x)-\varphi(x_i)|\beta_{i}^{0}(x)
\leq \frac{L\sqrt{d}}{2}\Delta x\quad\text{for all }x\in\RR^d.
\end{multline}
It follows that $\|\varphi-I^{0}[\varphi]\|_{\infty}\leq (L\sqrt{d}/2)\Delta x$ and, hence, 
\be
\Bigg|\frac{1}{(\Delta x)^d}\int_{E_{i}}\varphi(x)\dd x-\varphi(x_i)\Bigg|
=\Bigg|\frac{1}{(\Delta x)^d}\int_{E_{i}}(\varphi(x)-\varphi(x_i))\dd x\Bigg| \leq \frac{L\sqrt{d}}{2}\Delta x,
\ee 
from which we deduce that, for every $k\in\I_{\Delta t}^{*}$ and $j\in\ZZ^d$,
\begin{multline}
\label{estimation_long_calcul_1}
\Bigg|\sum_{i\in\ZZ^d}\frac{1}{(\Delta x)^d}\int_{E_i}\varphi(x)\dd x\int_{E_j}\beta_{i}^{0}(\mathsf{\Phi}_{k}^{\Delta}[\mu](y))\dd y-\int_{E_j}I^{0}[\varphi]
(\mathsf{\Phi}_{k}^{\Delta}[\mu](x))\dd x\Bigg| \\
=\Bigg|\sum_{i\in\ZZ^d}\left(\frac{1}{(\Delta x)^d}\int_{E_i}\varphi(x)\dd x - \varphi(x_i)\right)\int_{E_j}\beta_{i}^{0}(\mathsf{\Phi}_{k}^{\Delta}[\mu](y))\dd y\Bigg|
\leq \frac{L\sqrt{d}}{2}\Delta x\sum_{i\in\ZZ^d}\int_{E_j}\beta_{i}^{0}(\mathsf{\Phi}_{k}^{\Delta}[\mu](y))\dd y\\
=\frac{L\sqrt{d}}{2}(\Delta x)^{d+1}.
\end{multline}
Therefore, by~\eqref{estimation_long_calcul_1} and~\eqref{e:estimate_infty_norm_int_phi_0}, we obtain
\be 
\label{e:estimate_long_calcul_lemma}
\Bigg|\sum_{i\in\ZZ^d}\frac{1}{(\Delta x)^d}\int_{E_i}\varphi(x)\dd x \int_{E_j}\beta_{i}^{0}(\mathsf{\Phi}_{k}^{\Delta}[\mu](y))\dd y-\int_{E_j}\varphi
(\mathsf{\Phi}_{k}^{\Delta}[\mu](x)) \dd x\Bigg|\leq L\sqrt{d}(\Delta x)^{d+1}.
\ee
Finally, from~\eqref{lg_scheme}, \eqref{e:estimate_long_calcul_lemma},
and Proposition~\ref{prop:basic_properties_lg}\eqref{prop:basic_properties_lg_iv}, we get 
\begin{multline}
\Bigg|\int_{\RR^d}\varphi(x)\mathsf{m}^{\Delta}[\mu](t_{k+1},x)\dd x-
\int_{\RR^d}\varphi(\mathsf{\Phi}_{k}^{\Delta}[\mu](x))\mathsf{m}^{\Delta}[\mu](t_{k},x)\dd x\Bigg|\\
=\Bigg|\sum_{i\in\ZZ^d}\int_{E_{i}}\varphi(x)\dd x\frac{1}{(\Delta x)^d}\sum_{j\in\ZZ^d}m_{k,j}\int_{E_j}\beta_{i}^{0}(\mathsf{\Phi}_{k}^{\Delta}[\mu](y))\dd y
-\sum_{i\in\ZZ^d}m_{k,i}\int_{E_{i}}\varphi(\mathsf{\Phi}_{k}^{\Delta}[\mu](x))\dd x\Bigg|\\
=\Bigg|\sum_{j\in\ZZ^d}m_{k,j}\Bigg(\sum_{i\in\ZZ^d}\frac{1}{(\Delta x)^d}\int_{E_{i}}\varphi(x)\int_{E_j}\beta_{i}^{0}(\mathsf{\Phi}_{k}^{\Delta}[\mu](y))\dd y
-\int_{E_{j}}\varphi(\mathsf{\Phi}_{k}^{\Delta}[\mu](x))\dd x\Bigg)\Bigg|\leq L\sqrt{d}(\Delta x),
\end{multline}
which shows~\eqref{e:estimate_int_phi_m_two_steps}. 
\end{proof}
In the next result, we study the equicontinuity of the family $\{\mathsf{m}^{\Delta}[\mu]\,|\,\Delta\in]0,\infty[^{3}\}$, under the condition that $\Delta x$ is, at most, of the order of $\Delta t$. 
\begin{proposition}
\label{prop:equicontinuity} 
Assume {\bf(H\ref{h:h1})}-{\bf(H\ref{h:initial_condition})}, let $\mu\in C([0,T];\P_1(\RR^d))$, and let $\Delta=(\Delta t,\Delta x,\eps)\in ]0,\infty[^{3}$. Then, for every $c>0$, if $\Delta x\leq c\Delta t$, we have
\be
\label{equicontinuity}
d_1(\mathsf{m}^{\Delta}[\mu](t),\mathsf{m}^{\Delta}[\mu](s))\leq (C_{\text{{\rm bf}}}+c\sqrt{d})|t-s|\quad
\text{for all }t,\,s\in[0,T],
\ee
where, for every $t\in[0,T]$, $\mathsf{m}^{\Delta}[\mu](t)\in\P_{1}(\RR^d)$ denotes the measure $\dd \mathsf{m}^{\Delta}[\mu](t)(x)=\mathsf{m}^{\Delta}[\mu](t,x)\dd x$.
\end{proposition}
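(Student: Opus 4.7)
The plan is to use the Kantorovich--Rubinstein dual formula~\eqref{e:d_1_alternative}: it suffices to bound, for an arbitrary $\varphi\in\text{Lip}_{1}(\RR^{d})$, the quantity $\big|\int_{\RR^d}\varphi\,\dd\mathsf{m}^{\Delta}[\mu](t)-\int_{\RR^d}\varphi\,\dd\mathsf{m}^{\Delta}[\mu](s)\big|$. The argument proceeds in three steps: a one-step estimate at consecutive grid times, telescoping to arbitrary grid times, and finally extending to all $s,t\in[0,T]$ via the time-interpolation formula~\eqref{m_delta_in_t}.

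\textbf{Step 1 (one-step estimate).} Fix $k\in\I_{\Delta t}^{*}$ and $\varphi\in\text{Lip}_{1}(\RR^{d})$. Insert the pivot quantity $\int_{\RR^d}\varphi(\mathsf{\Phi}_{k}^{\Delta}[\mu](x))\,\mathsf{m}^{\Delta}[\mu](t_{k},x)\dd x$, so that the difference of integrals at $t_{k+1}$ and $t_{k}$ splits into two pieces. The first piece is controlled by Lemma~\ref{l:difference_integral_interpolation} with $L=1$, giving a bound of $\sqrt{d}\,\Delta x$. The second piece equals $\int_{\RR^d}\big(\varphi(\mathsf{\Phi}_{k}^{\Delta}[\mu](x))-\varphi(x)\big)\mathsf{m}^{\Delta}[\mu](t_{k},x)\dd x$; by $1$-Lipschitzianity of $\varphi$, the definition~\eqref{Phi_mu_eps_k}, the uniform bound~\eqref{eq:bounded_flow}, and Proposition~\ref{prop:basic_properties_lg}\eqref{prop:basic_properties_lg_iv}, this is bounded by $C_{\text{{\rm bf}}}\Delta t$. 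Using the hypothesis $\Delta x\leq c\Delta t$, we obtain
\begin{equation*}
\Bigg|\int_{\RR^{d}}\varphi\dd\mathsf{m}^{\Delta}[\mu](t_{k+1})-\int_{\RR^{d}}\varphi\dd\mathsf{m}^{\Delta}[\mu](t_{k})\Bigg|\leq (C_{\text{{\rm bf}}}+c\sqrt{d})\Delta t.
\end{equation*}

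\textbf{Step 2 (arbitrary grid times).} For $j\leq k$ in $\I_{\Delta t}$, write the difference as a telescoping sum $\sum_{\ell=j}^{k-1}\big(\int\varphi\dd\mathsf{m}^{\Delta}[\mu](t_{\ell+1})-\int\varphi\dd\mathsf{m}^{\Delta}[\mu](t_{\ell})\big)$ and apply Step~1 to each term. Taking the supremum over $\varphi\in\text{Lip}_{1}(\RR^{d})$ and invoking~\eqref{e:d_1_alternative} gives $d_{1}(\mathsf{m}^{\Delta}[\mu](t_{k}),\mathsf{m}^{\Delta}[\mu](t_{j}))\leq (C_{\text{{\rm bf}}}+c\sqrt{d})(t_{k}-t_{j})$.

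\textbf{Step 3 (general $s,t\in[0,T]$).} By~\eqref{m_delta_in_t}, on each subinterval $[t_{k},t_{k+1}]$ the measure $\mathsf{m}^{\Delta}[\mu](\cdot)$ is an affine convex combination of $\mathsf{m}^{\Delta}[\mu](t_{k})$ and $\mathsf{m}^{\Delta}[\mu](t_{k+1})$. Thus, for $s\in[t_{j},t_{j+1}]$ and $t\in[t_{k},t_{k+1}]$ with $s\leq t$, the integral $\int\varphi\dd(\mathsf{m}^{\Delta}[\mu](t)-\mathsf{m}^{\Delta}[\mu](s))$ can be expanded as a linear combination of differences $\int\varphi\dd(\mathsf{m}^{\Delta}[\mu](t_{\ell+1})-\mathsf{m}^{\Delta}[\mu](t_{\ell}))$ whose coefficients (in absolute value) sum to $(t-s)/\Delta t$. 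Combining this with the one-step estimate of Step~1 yields
\begin{equation*}
\Bigg|\int_{\RR^{d}}\varphi\dd\mathsf{m}^{\Delta}[\mu](t)-\int_{\RR^{d}}\varphi\dd\mathsf{m}^{\Delta}[\mu](s)\Bigg|\leq (C_{\text{{\rm bf}}}+c\sqrt{d})|t-s|,
\end{equation*}
and a final use of~\eqref{e:d_1_alternative} delivers~\eqref{equicontinuity}.

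The only nontrivial part is the one-step estimate in Step~1, and even there the heavy lifting is already done by Lemma~\ref{l:difference_integral_interpolation}; what remains is the elementary bound on the displacement of the discrete characteristic. The extension from grid times to arbitrary times via the piecewise-linear interpolation is routine, and the mass-preservation property in Proposition~\ref{prop:basic_properties_lg}\eqref{prop:basic_properties_lg_iv} ensures that all test integrals against probability measures make sense.
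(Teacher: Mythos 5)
Your proof is correct and follows essentially the same route as the paper: the one-step bound at consecutive grid times via Lemma~\ref{l:difference_integral_interpolation}, the displacement bound~\eqref{eq:bounded_flow}, mass conservation, and $\Delta x\leq c\Delta t$, followed by the Kantorovich--Rubinstein duality~\eqref{e:d_1_alternative}. The paper packages the extension to arbitrary $s,t$ as a $W^{1,\infty}$ bound on the piecewise-affine map $t\mapsto\int\varphi\,\mathsf{m}^{\Delta}[\mu](t,x)\dd x$, which is the same observation as your Steps~2--3.
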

\begin{proof}
Let $\varphi\in\text{Lip}_1(\RR^d)$ and define $\psi_{\varphi}\colon[0,T]\to\RR$ by
\be 
\psi_{\varphi}(t)=\int_{\RR^d}\varphi(x)\mathsf{m}^{\Delta}[\mu](t,x)\dd x\quad\text{for all }t\in[0,T].
\ee
It follows from~\eqref{m_delta_in_t} that $\psi_{\varphi}$ is continuous and affine on every interval $[t_{k},t_{k+1}]$ ($k\in\I_{\Delta t}^{*}$). Thus, $\psi_{\varphi}\in W^{1,\infty}(]0,T[)$ and 
\be 
\label{e:infty_norm_derivative_psi}
\Big\|\frac{\dd}{\dd t}\psi_{\varphi}\Big\|_{\infty}=\frac{1}{\Delta t}\max_{k\in\I_{\Delta}^{*}}\Bigg|\int_{\RR^d}\varphi(x)\bigg(\mathsf{m}^{\Delta}[\mu](t_{k+1},x)-\mathsf{m}^{\Delta}[\mu](t_k,x)\bigg)\dd x\Bigg|.
\ee
In order to estimate the right-hand side of~\eqref{e:infty_norm_derivative_psi}, fix $k\in\I_{\Dt}^*$ and notice that, by Lemma~\ref{l:difference_integral_interpolation}, ~\eqref{Phi_mu_eps_k}, ~\eqref{eq:bounded_flow}, Proposition~\ref{prop:basic_properties_lg}\eqref{prop:basic_properties_lg_iv}, and $\Delta x\leq c\Delta t$, we have
\begin{align}
\int_{\RR^d}\varphi(x)\bigg(\mathsf{m}^{\Delta}[\mu](t_{k+1},x)-\mathsf{m}^{\Delta}[\mu](t_k,x)\bigg)\dd x&\leq\int_{\RR^d}\bigg(\varphi(\mathsf{\Phi}_{k}^{\Delta}[\mu](x))-\varphi(x)\bigg)\mathsf{m}^{\Delta}[\mu](t_{k},x)\dd x+\sqrt{d}\Delta x\nonumber\\
&\leq\int_{\RR^d}\Big|\mathsf{\Phi}_{k}^{\Delta}[\mu](x)-x\Big|\mathsf{m}^{\Delta}[\mu](t_{k},x)\dd x+\sqrt{d}\Delta x\nonumber\\
&\leq (C_{\text{{\rm bf}}}+c\sqrt{d})\Delta t.
\label{equicon_in_tk}
\end{align}
Changing $\varphi$ by $-\varphi$ in the previous computation,~\eqref{equicon_in_tk} implies that 
\be
\Bigg|\int_{\RR^d}\varphi(x)\bigg(\mathsf{m}^{\Delta}[\mu](t_{k+1},x)-\mathsf{m}^{\Delta}[\mu](t_k,x)\bigg)\dd x\Bigg|\leq (C_{\text{{\rm bf}}}+c\sqrt{d})\Delta t.
\ee
and hence, by~\eqref{e:infty_norm_derivative_psi}, $\Big\|\frac{\dd}{\dd t}\psi_{\varphi}\Big\|_{\infty}\leq (C_{\text{{\rm bf}}}+c\sqrt{d})$. Thus, we deduce that
\be 
\label{equicon_in_tk_d1}
\int_{\RR^d}\varphi(x)\bigg(\mathsf{m}^{\Delta}[\mu](t,x)-\mathsf{m}^{\Delta}[\mu](s,x)\bigg)\dd x\leq (C_{\text{{\rm bf}}}+c\sqrt{d})|t-s|\quad \text{for all }t,\,s\in[0,T]
\ee
and~\eqref{equicontinuity} follows from~\eqref{e:d_1_alternative}.
\end{proof}

The following result state a stability property for $\mathsf{m}^{\Delta}[\mu]$ which is analogous to the one in Proposition~\ref{prop:stability_cas_continu_I} for $m^{\Delta}[\mu]$.
\begin{proposition}
\label{prop:stability}
Assume {\bf(H\ref{h:h1})}-{\bf(H\ref{h:initial_condition})}, let $\mu \in C([0,T];\P_1(\RR^d))$, and let $\Delta=(\Delta t,\Delta x,\eps)\in ]0,\infty[^3$. Then for every $c_{1}$, $c_{2}>0$, there exists $\widetilde{C}>0$, independent of $\mu$, such that, if $\Delta$ is small enough, $\Delta x\leq c_{1}\Delta t$, and $\Delta t\leq c_{2}\eps^{2}$, we have $\mathsf{m}^{\Delta}[\mu](t,\cdot)\in L^{\mathsf{p}}(\RR^d)$ for all $t\in[0,T]$ and
\be 
\label{stability}
\|\mathsf{m}^{\Delta}[\mu](t,\cdot) \|_{L^{\mathsf{p}}(\RR^{d})} \leq\widetilde{C}\|m_{0}^*\|_{L^{\mathsf{p}}(\RR^{d})}\quad\text{for all }t\in[0,T].
\ee 
\end{proposition}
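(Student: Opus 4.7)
The plan is to iterate the per-step bound
\be
\|\mathsf{m}^{\Delta}[\mu](t_{k+1},\cdot)\|_{L^{\mathsf{p}}(\RR^{d})}^{\mathsf{p}}\leq(1+C\Delta t)^{\mathsf{p}-1}\|\mathsf{m}^{\Delta}[\mu](t_{k},\cdot)\|_{L^{\mathsf{p}}(\RR^{d})}^{\mathsf{p}},
\ee
which is the discrete counterpart of the energy-type estimate underlying the continuous case (Proposition~\ref{prop:stability_cas_continu_I}). The starting point is to rewrite~\eqref{lg_scheme}, via the identity $\cup_{j}E_{j}=\RR^{d}$ modulo null sets, as the average
\be
m_{k+1,i}=\frac{1}{(\Delta x)^{d}}\int_{\mathsf{\Phi}_{k}^{\Delta}[\mu]^{-1}(E_{i})}\mathsf{m}^{\Delta}[\mu](t_{k},x)\dd x,
\ee
so that Jensen's inequality yields, for $\mathsf{p}\in\,]1,\infty[$,
\be
|m_{k+1,i}|^{\mathsf{p}}\leq\frac{[\mathcal{L}^{d}(\mathsf{\Phi}_{k}^{\Delta}[\mu]^{-1}(E_{i}))]^{\mathsf{p}-1}}{(\Delta x)^{d\mathsf{p}}}\int_{\mathsf{\Phi}_{k}^{\Delta}[\mu]^{-1}(E_{i})}[\mathsf{m}^{\Delta}[\mu](t_{k},x)]^{\mathsf{p}}\dd x.
\ee
The whole argument is thereby reduced to a uniform lower bound for $|\det D\mathsf{\Phi}_{k}^{\Delta}[\mu]|$.

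The core step is this Jacobian bound. Differentiating~\eqref{Phi_mu_eps_k} yields $D\mathsf{\Phi}_{k}^{\Delta}[\mu](x)=I-\Delta t\,A(x)$ with
\be
A(x)=D_{xp}H(x,D_{x}v^{\Delta}[\mu](t_{k},x))+D_{pp}H(x,D_{x}v^{\Delta}[\mu](t_{k},x))\,D_{xx}^{2}v^{\Delta}[\mu](t_{k},x).
\ee
Under $\Delta x\leq c_{1}c_{2}\eps^{2}$ (which follows from the two scaling hypotheses), Proposition~\ref{prop:bound_hessian_semiconcavity_of_v} delivers the matrix inequality $D_{xx}^{2}v^{\Delta}[\mu](t_{k},x)\leq C_{1}I$, while convexity of $H$ in $p$ gives $D_{pp}H\geq 0$. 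Combining these with the standard bound $\tr(BS)\leq\lambda_{\max}(S)\tr(B)$ (valid for $B\geq 0$ and $S$ symmetric), applied with $B=D_{pp}H$ and $S=D_{xx}^{2}v^{\Delta}[\mu]$, together with the boundedness of $D_{xp}H$ on the compact set where the relevant arguments live (Proposition~\ref{prop:value_function_discrete_eps}\eqref{prop:value_function_discrete_eps_ii} and Lemma~\ref{l:uniform_compact_support_discrete_solution}), yields $\tr(A(x))\leq C_{2}$ uniformly in $(\mu,\Delta,k,x)$. For the quadratic remainder, \eqref{eq:bounds_higher_order_derivatives_eps} provides $\|D_{xx}^{2}v^{\Delta}[\mu](t_{k},x)\|\leq C_{3}/\eps$, so that $\|A(x)\|\leq C_{4}/\eps$; combined with $\Delta t\leq c_{2}\eps^{2}$ this gives $(\Delta t)^{2}\|A(x)\|^{2}\leq C_{5}\Delta t$. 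For $\Delta$ small enough, $\|\Delta t\,A\|\leq 1/2$, and the expansion $\log\det(I-\Delta t\,A(x))=-\Delta t\tr(A(x))+O((\Delta t)^{2}\|A(x)\|^{2})$ therefore yields $|\det D\mathsf{\Phi}_{k}^{\Delta}[\mu](x)|\geq 1-C\Delta t$ uniformly. The same smallness makes $\mathsf{\Phi}_{k}^{\Delta}[\mu]$ a global $C^{1}$-diffeomorphism of $\RR^{d}$, hence $\{\mathsf{\Phi}_{k}^{\Delta}[\mu]^{-1}(E_{i})\}_{i\in\ZZ^{d}}$ partitions $\RR^{d}$ modulo null sets, and the change-of-variable formula delivers $\mathcal{L}^{d}(\mathsf{\Phi}_{k}^{\Delta}[\mu]^{-1}(E_{i}))\leq(1+C\Delta t)(\Delta x)^{d}$.

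Substituting this into the Jensen estimate, multiplying by $(\Delta x)^{d}$, and summing over $i\in\ZZ^{d}$ using the partition property gives the per-step inequality. Iterating and invoking $\|\mathsf{m}^{\Delta}[\mu](0,\cdot)\|_{L^{\mathsf{p}}(\RR^{d})}\leq\|m_{0}^{*}\|_{L^{\mathsf{p}}(\RR^{d})}$ (another application of Jensen, now to~\eqref{lg_scheme_initial_condition}) yields~\eqref{stability} at the nodes $t=t_{k}$ with $\widetilde{C}=e^{CT(\mathsf{p}-1)/\mathsf{p}}$; the general case $t\in[t_{k},t_{k+1}]$ follows from~\eqref{m_delta_in_t} and convexity of $\|\cdot\|_{L^{\mathsf{p}}(\RR^{d})}$, while the case $\mathsf{p}=\infty$ is handled analogously via the pointwise estimate $m_{k+1,i}\leq(1+C\Delta t)\|\mathsf{m}^{\Delta}[\mu](t_{k},\cdot)\|_{L^{\infty}(\RR^{d})}$. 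The main obstacle is the Jacobian lower bound: the semiconcavity provides only a one-sided upper bound on $D_{xx}^{2}v^{\Delta}[\mu]$, whose full operator norm may blow up like $1/\eps$. The two scaling assumptions are thus needed in a coordinated way: $\Delta x\leq c_{1}\Delta t$ (via $\Delta x\leq c_{1}c_{2}\eps^{2}$) makes the semiconcavity control the leading $\Delta t\,\tr(A)$ term in the log-determinant expansion, while $\Delta t\leq c_{2}\eps^{2}$ absorbs the $1/\eps^{2}$ blow-up of $\|A\|^{2}$ so that the quadratic remainder remains $O(\Delta t)$.
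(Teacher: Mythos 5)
Your proposal is correct, and the Jacobian analysis at its core is essentially the paper's: the same decomposition $D\mathsf{\Phi}_{k}^{\Delta}[\mu]=I-\Delta t\,A$, the same trace bound obtained by combining the one-sided Hessian estimate of Proposition~\ref{prop:bound_hessian_semiconcavity_of_v} (available because $\Delta x\leq c_{1}c_{2}\eps^{2}$) with $D^{2}_{pp}H\geq 0$, and the same absorption of the quadratic remainder $O((\Delta t)^{2}/\eps^{2})$ into $O(\Delta t)$ via $\Delta t\leq c_{2}\eps^{2}$, leading to $|\det D\mathsf{\Phi}_{k}^{\Delta}[\mu]|^{-1}\leq 1+C\Delta t$. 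Where you genuinely diverge is in how this is converted into the per-step $L^{\mathsf{p}}$ bound. The paper argues by duality: it tests the weak form \eqref{weak_scheme_weak_form} with $\varphi_{a}=(\mathsf{m}^{\Delta}[\mu](t_{k+1},\cdot))^{\mathsf{p}-1}$, applies H\"older with exponents $\mathsf{p}$ and $\mathsf{p}^{*}$, and then uses the change of variables to control $\int(\mathsf{m}^{\Delta}[\mu](t_{k+1},\mathsf{\Phi}_{k}^{\Delta}[\mu](x)))^{\mathsf{p}}\dd x$; the case $\mathsf{p}=\infty$ is then recovered by letting $\mathsf{p}'\to\infty$ using the uniform compact support. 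You instead use the primal representation $m_{k+1,i}=(\Delta x)^{-d}\int_{\mathsf{\Phi}_{k}^{\Delta}[\mu]^{-1}(E_{i})}\mathsf{m}^{\Delta}[\mu](t_{k},x)\dd x$ together with Jensen and the measure bound $\L^{d}(\mathsf{\Phi}_{k}^{\Delta}[\mu]^{-1}(E_{i}))\leq(1+C\Delta t)(\Delta x)^{d}$, which is a bit more direct and yields $\mathsf{p}=\infty$ immediately from the same pointwise inequality rather than by a limiting argument. One small imprecision: you invoke a \emph{global} $C^{1}$-diffeomorphism of $\RR^{d}$ to get the partition property of $\{\mathsf{\Phi}_{k}^{\Delta}[\mu]^{-1}(E_{i})\}_{i}$; globally this would require uniform bounds on $D^{2}_{pp}H$ and $D^{2}_{xp}H$ for unbounded $x$, which the hypotheses do not give. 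What you actually need (and what the paper establishes) is only that $\mathsf{\Phi}_{k}^{\Delta}[\mu]$ is a diffeomorphism of a fixed ball $B_{\infty}(0,R)$ containing the supports of all the $\mathsf{m}^{\Delta}[\mu](t_{k},\cdot)$ (Lemma~\ref{l:uniform_compact_support_discrete_solution}), since only the cells $E_{i}$ with $i\in\I_{\Delta x}$ and the portion of $\mathsf{\Phi}_{k}^{\Delta}[\mu]^{-1}(E_{i})$ meeting $\supp(\mathsf{m}^{\Delta}[\mu](t_{k},\cdot))$ enter the sums; restricting your argument to that compact set repairs this at no cost.
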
 
\begin{proof}
Let $c_{1}$, $c_{2}>0$, suppose that $\Delta x\leq c_{1}\Delta t$, $\Delta t\leq c_{2}\eps^{2}$, fix $k\in \I_{\Dt}^*$, and let $\widetilde{C}^{*}>0$ be as in Lemma~\ref{l:uniform_compact_support_discrete_solution}. Then, by~\eqref{eq:bounded_flow}, there exists $R>0$, independent of $(\mu,\Delta,k)$, such that
$\big(\mathsf{\Phi}_{k}^{\Delta}[\mu]\big)^{-1}\big(\ov{B}_{\infty}(0,\widetilde{C}^{*})\big)\subset B_{\infty}(0,R)$. The regularity of $H$ and
estimate~\eqref{eq:bounds_higher_order_derivatives_eps}, with $\ell=2$, yield the existence of $C_{R}>0$, independent of $(\mu,\Delta,k)$, such that, for every $x\in B_{\infty}(0,R)$, 
the norm of the matrix
\begin{multline}
\label{eq:calcul_derivee_du_champ_de_vecteurs}
D_x\big(D_{p}H(x,D_{x}v^{\Delta}[\mu](t_{k},x))\big)\\
=D^2_{p}H\Big(x,D_{x}v^{\Delta}[\mu](t_{k},x)\Big)D_{x}^2v^{\Delta}[\mu](t_{k},x)+ D_{xp}^{2}H\Big(x,D_{x}v^{\Delta}[\mu](t_{k},x)\Big),
\end{multline}
induced by the $2$-norm in $\RR^d$, is bounded by $C_{R}/\eps$. In particular,  $D_{p}H(\cdot,D_{x}v^{\Delta}[\mu](t_{k},\cdot))$ is $(C_{R}/\eps)$-Lipschitz on $B_{\infty}(0,R)$. Thus, expression~\eqref{Phi_mu_eps_k} and the inequality $\Delta t/\eps \leq c_{2}\eps$ imply that, if $\eps$ is small enough, there exists $C_{1}>0$, independent of $(\mu,\Delta,k)$, such that 
\be
\big|\mathsf{\Phi}_{k}^{\Delta}[\mu](x)-\mathsf{\Phi}_{k}^{\Delta}[\mu](y)\big|\geq C_{1} |x-y|\quad\text{for all } x,\,y\in B_{\infty}(0,R),
\ee
which implies that $\mathsf{\Phi}_{k}^{\Delta}[\mu]$ is injective on  $B_{\infty}(0,R)$, and, denoting by $I_d$ the $d\times d$ identity matrix,
\be
\label{eq:derivative_phi_k_eps}
D_{x}\mathsf{\Phi}_{k}^{\Delta}[\mu](x)= I_d-\Dt D_x[D_pH(x,D_{x}v^{\Delta}[\mu](t_k,x))]
\ee
is invertible for $x\in B_{\infty}(0,R)$. In particular, $\mathsf{\Phi}_{k}^{\Delta}[\mu]$ is a diffeomorphism  of $B_{\infty}(0,R)$ onto $\mathsf{\Phi}_{k}^{\Delta}[\mu](B_{\infty}(0,R))$. Let us suppose first that $\mathsf{p}\in]1,\infty[$.  By the change of variable formula, we have
\begin{multline}
\label{estimate_l2_norm_k+1_witD_pHhi}
\int_{\RR^{d}} \Big(\mathsf{m}^{\Delta}[\mu]\big(t_{k+1},\mathsf{\Phi}_{k}^{\Delta}[\mu](x)\big)\Big)^{\mathsf{p}}\dd x=\int_{B_{\infty}(0,R)} \Big(\mathsf{m}^{\Delta}[\mu]\big(t_{k+1},\mathsf{\Phi}_{k}^{\Delta}[\mu](x)\big)\Big)^{\mathsf{p}}\dd x\\
=\int_{\mathsf{\Phi}_{k}^{\Delta}[\mu](B_{\infty}(0,R))}\big(\mathsf{m}^{\Delta}[\mu](t_{k+1}, y)\big)^{\mathsf{p}}\Big|\mbox{det}\left(D_{x}\mathsf{\Phi}_{k}^{\Delta}[\mu]\Big(\mathsf{\Phi}_{k}^{\Delta}[\mu]^{-1}(y)\Big) \right)\Big|^{-1} \dd y.
\end{multline}
Using again that the norm of $D_x\big(D_{p}H(\cdot,D_{x}v^{\Delta}[\mu](t_{k},\cdot))\big)$ is bounded by $C_{R}/\eps$ on $B_{\infty}(0,R)$, relation~\eqref{eq:derivative_phi_k_eps} and a Taylor expansion for the determinant  yield the existence of $C_{2}>0$, independent of $(\mu,\Delta,k)$, such that 
\begin{equation*}
\Big|\mbox{det}\left(D_{x}\mathsf{\Phi}_{k}^{\Delta}[\mu](x)\right)-\Big(1-\Dt \mbox{Tr}\left(D_x\big(D_pH(x,D_{x}v^{\Delta}[\mu](t_k,x))\big)\right)\Big)\Big|\leq C_{2}(\Dt/\eps)^2\quad\text{for all }x\in B_{\infty}(0,R),
\end{equation*}
where, given $B\in \RR^{d\times d}$, $\text{Tr}(B)$ denotes its trace. In turn, we get the existence of $C_{3}>0$, independent of $(\mu,\Delta,k)$, such that 
\be
\label{e:expansion_inverse_of_determinant}
\Big|\,\big|\mbox{det}\left(D_{x}\mathsf{\Phi}_{k}^{\Delta}[\mu](x)\right)\big|^{-1}-\Big(1+\Dt \mbox{Tr}\left(D_x\big(D_pH(x,D_{x}v^{\Delta}[\mu](t_k,x))\big)\right)\Big)\Big|\leq C_{3}(\Dt/\eps)^2,
\ee
for all $x\in B_{\infty}(0,R)$. Since $\Delta x\leq c_{1}c_{2}\eps^2$, Proposition~\ref{prop:bound_hessian_semiconcavity_of_v} implies that $D_{x}^{2}v^{\Delta}[\mu](t_{k},x)-\widetilde{C}_{\text{{\rm hb}}}(1+(c_{1}c_{2})^{2})I_d$ is negative semidefinite. Using that $H(\cdot,\cdot)$ is of class $C^{2}$ and convex with respect to its second argument, it follows from~\eqref{eq:calcul_derivee_du_champ_de_vecteurs} and \cite[Lemma~1.6.4]{CannSinesbook} that there exists $C_{4}>0$, independent of $(\mu,\Delta,k)$, such that 
\be 
\label{e:trace_inequality}
\Tr(D_x[ D_{p}H(x,D_{x}v^{\Delta}[\mu](t_{k},x))])\leq C_{4}\quad\text{for all }x\in B_{\infty}(0,R),
\ee
which, together with~\eqref{e:expansion_inverse_of_determinant}, yields
\be
\label{e:expansion_inverse_of_determinant_with_bound}
\Big|\mbox{det}\left(D_{x}\mathsf{\Phi}_{k}^{\Delta}[\mu](x)\right)\Big|^{-1}\leq1+C_{5}\Delta t\quad\text{for all }x\in B_{\infty}(0,R), 
\ee
where $C_{5}=C_{4}+C_{3}c_{2}$. Therefore, by~\eqref{estimate_l2_norm_k+1_witD_pHhi}, we get
\be
\label{estimate_l2_norm_k+1_witD_pHhi_final_bound}
\int_{\RR^{d}}\Big(\mathsf{m}^{\Delta}[\mu]\big(t_{k+1},
\mathsf{\Phi}_{k}^{\Delta}[\mu](x)\big)\Big)^{\mathsf{p}}\dd x\leq(1+C_{5}\Delta t)\int_{\RR^d} \big(\mathsf{m}^{\Delta}[\mu](t_{k+1}, x)\big)^{\mathsf{p}}\dd x.
\ee
Setting $\mathsf{p}^{*}=\mathsf{p}/(\mathsf{p}-1)$, it follows from~\eqref{weak_scheme_weak_form} and H\"older's inequality that 
\begin{align}
\|\mathsf{m}^{\Delta}[\mu](t_{k+1},\cdot)\|_{L^{\mathsf{p}}(\RR^{d})}^{\mathsf{p}}&=\int_{\RR^d}
\big(\mathsf{m}^{\Delta}[\mu](t_{k+1},x)\big)^{\mathsf{p}-1}\mathsf{m}^{\Delta}[\mu](t_{k+1},x)
\dd x\nonumber\\ 
&=\int_{\RR^d}\big(\mathsf{m}^{\Delta}[\mu](t_{k+1},\mathsf{\Phi}_{k}^{\Delta}[\mu](x))\big)^{\mathsf{p}-1} \mathsf{m}^{\Delta}[\mu](t_{k},x)\dd x\nonumber\\ 
&\leq\ds\left(\int_{\RR^{d}}\big(\mathsf{m}^{\Delta}[\mu](t_{k+1},\mathsf{\Phi}_{k}^{\Delta}[\mu](x))\big)^{\mathsf{p}}\dd x\right)^{\frac{1}{\mathsf{p}^{*}}}\|\mathsf{m}^{\Delta}[\mu](t_{k},\cdot)\|_{L^{\mathsf{p}}(\RR^{d})}\nonumber\\
&\leq (1+C_{5}\Dt)^{\frac{1}{\mathsf{p}^{*}}}\|\mathsf{m}^{\Delta}[\mu](t_{k+1},\cdot)\|_{L^{\mathsf{p}}(\RR^{d})}^{\frac{\mathsf{p}}{\mathsf{p}^{*}}}\|\mathsf{m}^{\Delta}[\mu](t_{k},\cdot)\|_{L^{\mathsf{p}}(\RR^{d})}\nonumber\\ 
&\leq (1+C_{5}\Dt)\|\mathsf{m}^{\Delta}[\mu](t_{k+1},\cdot)\|_{L^{\mathsf{p}}(\RR^{d})}^{\mathsf{p}-1}\|\mathsf{m}^{\Delta}[\mu](t_{k},\cdot)\|_{L^{\mathsf{p}}(\RR^{d})}.
\label{e:estimate_norm_lp_power_p}
\end{align}
In turn, we deduce that
\be
\|\mathsf{m}^{\Delta}[\mu](t_{k+1},\cdot)\|_{L^{\mathsf{p}}(\RR^{d})}\leq(1+C_{5}\Dt)\|\mathsf{m}^{\Delta}[\mu](t_{k},\cdot)\|_{L^{\mathsf{p}}(\RR^{d})}.
\ee 
By~\eqref{lg_scheme_initial_condition} and Jensen's inequality, we have $\|\mathsf{m}^{\Delta}[\mu](0,\cdot)\|_{L^{\mathsf{p}}(\RR^{d})}\leq\|m_0^*\|_{L^{\mathsf{p}}(\RR^{d})}$, and hence
\begin{align} 
\label{test}
\|\mathsf{m}^{\Delta}[\mu](t_{k+1},\cdot)\|_{L^{\mathsf{p}}(\RR^{d})}&\leq
(1+C_{5}\Dt)^{N}\|\mathsf{m}^{\Delta}[\mu](0,\cdot)\|_{L^{\mathsf{p}}(\RR^{d})}\nonumber\\
&\leq e^{C_{5}T}\|m_0^*\|_{L^{\mathsf{p}}(\RR^{d})},
\end{align}
which, by~\eqref{m_delta_in_t}, shows~\eqref{stability}, with $\widetilde{C}= e^{C_{5}T}$. If $\mathsf{p}=\infty$, then~\eqref{stability} holds for every $\mathsf{p}'\in]1,\infty[$. Noticing that $\widetilde{C}$ is independent of $\mathsf{p}'$ and that, by Proposition~\ref{prop:basic_properties_lg}\eqref{prop:basic_properties_lg_ii}, for every $t\in[0,T]$, the support of $\mathsf{m}^{\Delta}[\mu](t,\cdot)$ is contained in $\ov{B}_{\infty}(0,\widetilde{C}^{*})$,~\eqref{stability} for $\mathsf{p}=\infty$ follows by letting $\mathsf{p}'\to\infty$. 
\end{proof} 

The next result provides the analogous for $\mathsf{m}^{\Delta}[\mu]$ of Proposition~\ref{prop:stability_cas_continu_II} for $m^{\Delta}[\mu]$.

\begin{proposition}
\label{prop:stability_w_r_t_mu_n}
Assume {\bf(H\ref{h:h1})}-{\bf(H\ref{h:initial_condition})}, let 
$(\mu_{n})_{n\in \NN}\subset C([0,T];\P_1(\RR^d))$, and let
$\big((\Delta t_n, \Delta x_n,\eps_n)\big)_{n\in\NN}\subset (0,\infty)^3$. Set $\Delta_n=(\Delta t_n,\Delta x_n,\eps_n)$ and let $\mu\in C([0,T];\P_1(\RR^d))$. Suppose that, as $n\to\infty$, $\mu_n\to \mu$, $\Delta_n\to 0$, $\Delta x_n=o(\Delta t_n)$, and $\Delta t_n=O(\eps_n^2)$. Then, up to some subsequence, the following hold:
\begin{enumerate}[{\rm(i)}]
\item
\label{prop:stability_w_r_t_mu_n_i}
$(v^{\Delta_{n}}[\mu^n])_{n\in\NN}$ converges to $v[\mu]$, uniformly over compact subsets of $[0,T]\times\RR^{d}$, and, for every $K\subset [0,T]\times\RR^d$ compact and $q\in[1,\infty[$, $(\mathbb{I}_{K}D_{x}v^{\Delta_{n}}[\mu_n])_{n\in\NN}$ converges to $\mathbb{I}_{K}D_{x}v[\mu]$ in $L^{q}([0,T]\times\RR^d)$. 
\item
\label{prop:stability_w_r_t_mu_n_ii}
$\big(\mathsf{m}^{\Delta_n}[\mu_n]\big)_{n\in \NN}$  converges in $C([0,T];\P_1(\RR^d))$ towards a solution $m\in C([0,T];\P_1(\RR^d))\cap L^{\mathsf{p}}([0,T]\times\RR^{d})$ to~\eqref{limit_continuity_equation_H}. Moreover, the convergence also hold weakly in $L^{\mathsf{p}}([0,T]\times\RR^d)$, if $\mathsf{p}<\infty$, and weakly$^*$ in $L^{\infty}([0,T]\times\RR^d)$, if $\mathsf{p}=\infty$. In addition, there exists $\widetilde{C}>0$ such that 
\be 
\label{eq:stability_limit_solution}
\|m(t,\cdot) \|_{L^{\mathsf{p}}(\RR^{d})} \leq\widetilde{C}\|m_{0}^*\|_{L^{\mathsf{p}}(\RR^{d})}\quad\text{for all }t\in[0,T].
\ee 
\end{enumerate}
\end{proposition}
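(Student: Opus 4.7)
Assertion (i) is a direct application of Proposition~\ref{prop:convergence_v_eps}. One only needs to check that the required scaling conditions hold under our assumptions. Since $\Delta x_n\to 0$ and $\Delta x_n=o(\Delta t_n)$, we have $(\Delta x_n)^2/\Delta t_n=\Delta x_n\cdot\Delta x_n/\Delta t_n=o(\Delta x_n)\to 0$. Moreover, $\Delta t_n=O(\eps_n^2)$ yields $\Delta t_n/\eps_n=O(\eps_n)\to 0$, hence $\Delta x_n/\eps_n=o(\Delta t_n)/\eps_n\to 0$. Both hypotheses of Proposition~\ref{prop:convergence_v_eps} are therefore satisfied.

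For (ii), the plan is a three-step compactness-plus-consistency argument. First, since $\Delta x_n\leq \Delta t_n$ for $n$ large, Proposition~\ref{prop:basic_properties_lg}\eqref{prop:basic_properties_lg_ii} and Proposition~\ref{prop:equicontinuity} provide a uniform compact support $\ov{B}_{\infty}(0,\widetilde{C}^{*})$ and a uniform $d_1$-modulus of continuity in time for the family $\{\mathsf{m}^{\Delta_n}[\mu_n]\}_n\subset C([0,T];\P_1(\RR^d))$. Combining this with the tightness implied by the compact support and~\cite[Proposition~7.1.5]{Ambrosiogiglisav}, the Ascoli--Arzel\`a theorem yields a subsequence, not relabeled, converging in $C([0,T];\P_1(\RR^d))$ to some $m\in C([0,T];\P_1(\RR^d))$. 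Next, since the hypotheses of Proposition~\ref{prop:stability} hold for $n$ large (with constants $c_1=1$ and $c_2$ the one from $\Delta t_n=O(\eps_n^2)$), the sequence is bounded in $L^\infty([0,T];L^{\mathsf{p}}(\RR^d))$; Banach--Alaoglu (respectively its weak-$*$ version) provides, along a further subsequence, weak convergence in $L^{\mathsf{p}}([0,T]\times\RR^d)$ (weak-$*$ if $\mathsf{p}=\infty$) to the same limit $m$, and the weak lower semicontinuity of the $L^{\mathsf{p}}$-norm yields~\eqref{eq:stability_limit_solution}.

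The main step is then to verify that $m$ solves~\eqref{solution_distributional_sense}. Fix $\varphi\in C_0^\infty(\RR^d)$ and $t\in[0,T]$, and for each $n$ pick $\bar k_n\in\I_{\Delta t_n}$ with $t_{\bar k_n}\to t$. Using Lemma~\ref{l:difference_integral_interpolation} at each time step and telescoping gives
\begin{equation*}
\int_{\RR^d}\varphi\,\mathsf{m}^{\Delta_n}[\mu_n](t_{\bar k_n})\dd x-\int_{\RR^d}\varphi\,m_{0}^{*}\dd x=\sum_{k=0}^{\bar k_n-1}\int_{\RR^d}\bigl(\varphi(\mathsf{\Phi}^{\Delta_n}_k[\mu_n](x))-\varphi(x)\bigr)\mathsf{m}^{\Delta_n}[\mu_n](t_k,x)\dd x+r_n,
\end{equation*}
with $|r_n|\leq \mathrm{Lip}(\varphi)\sqrt{d}\,\bar k_n\Delta x_n\leq C\Delta x_n/\Delta t_n\to 0$ by the hypothesis $\Delta x_n=o(\Delta t_n)$. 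A second-order Taylor expansion combined with~\eqref{Phi_mu_eps_k} and~\eqref{eq:bounded_flow} shows that $\varphi(\mathsf{\Phi}^{\Delta_n}_k[\mu_n](x))-\varphi(x)=-\Delta t_n\langle D\varphi(x),D_pH(x,D_xv^{\Delta_n}[\mu_n](t_k,x))\rangle+O((\Delta t_n)^2)$ uniformly in $x$ on the support of $\mathsf{m}^{\Delta_n}[\mu_n](t_k,\cdot)$; since the total mass is $1$, the sum of the quadratic remainders is $O(\Delta t_n)\to 0$. The linear part is a Riemann sum that can be rewritten as $\int_0^{t_{\bar k_n}}\int_{\RR^d}\langle D\varphi,D_pH(\cdot,D_xv^{\Delta_n}[\mu_n])\rangle\,\widetilde{\mathsf{m}}^{\Delta_n}[\mu_n]\dd x\dd s$, where $\widetilde{\mathsf{m}}^{\Delta_n}[\mu_n]$ is the piecewise-constant-in-time interpolant; by Proposition~\ref{prop:equicontinuity}, this interpolant has the same $C([0,T];\P_1(\RR^d))$ and $L^{\mathsf{p}}$ limit as $\mathsf{m}^{\Delta_n}[\mu_n]$.

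The principal obstacle is passing to the limit in this nonlinear term, since $\mathsf{m}^{\Delta_n}[\mu_n]$ converges only weakly in $L^{\mathsf{p}}$ while the integrand depends on $n$. The resolution uses the strong convergence from (i): on the compact set $[0,T]\times\ov{B}_{\infty}(0,\widetilde{C}^{*})$, the sequence $D_pH(\cdot,D_xv^{\Delta_n}[\mu_n])$ is uniformly bounded by~\eqref{eq:bounded_flow} and converges to $D_pH(\cdot,D_xv[\mu])$ strongly in $L^{q}$ for any $q\in[1,\infty[$ by (i) and the continuity of $D_pH$. Since $\mathrm{supp}(\mathsf{m}^{\Delta_n}[\mu_n](s,\cdot))\subset\ov{B}_{\infty}(0,\widetilde{C}^{*})$, the strong--weak product (take $q=\mathsf{p}^*$ if $\mathsf{p}<\infty$, or $q=1$ against weak-$*$ in $L^\infty$ if $\mathsf{p}=\infty$) allows one to pass to the limit, obtaining exactly the weak formulation~\eqref{solution_distributional_sense}. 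Combining this with the convergence of the left-hand side, in which the continuity of $m$ handles $t_{\bar k_n}\to t$, concludes the proof.
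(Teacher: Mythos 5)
Your proposal is correct and follows essentially the same route as the paper: compactness via the uniform compact support, the $d_1$-equicontinuity and Arzel\`a--Ascoli, the $L^{\mathsf{p}}$ bound from Proposition~\ref{prop:stability} with weak (weak$^*$) compactness and lower semicontinuity of the norm, and then the telescoping identity, Lemma~\ref{l:difference_integral_interpolation}, a Taylor expansion of $\varphi\circ\mathsf{\Phi}^{\Delta_n}_k[\mu_n]$, and the strong--weak pairing enabled by assertion~\eqref{prop:stability_w_r_t_mu_n_i}. The only (harmless) variation is in converting the Riemann sum into a time integral: you pass through the piecewise-constant-in-time interpolant and argue it shares the same limits, whereas the paper bounds the substitution error directly by $O(\Delta t_n/\eps_n)$ using the $(C_\varphi/\eps_n)$-Lipschitz continuity of $\langle D\varphi,D_pH(\cdot,D_xv^n(t_k,\cdot))\rangle$ together with the $d_1$-equicontinuity.
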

\begin{proof}
Let us set $v^n:=v^{\Delta_n,\eps_n}[\mu^n]$, $\mathsf{m}^n=\mathsf{m}^{\Delta_n,\eps_n}[\mu_n]$, and $\mathsf{\Phi}^{n}_{k}=\mathsf{\Phi}^{\Delta_n,\eps_n}[\mu_n]$ for all $k\in\I_{\Delta t_n}^{*}$.

\eqref{prop:stability_w_r_t_mu_n_i}: This corresponds to Proposition~\ref{prop:stability_cas_continu_II}\eqref{prop:stability_cas_continu_II_i}.

\eqref{prop:stability_w_r_t_mu_n_ii}: It follows from Proposition~\ref{prop:basic_properties_lg}\eqref{prop:basic_properties_lg_ii},
~\cite[Proposition~7.1.5]{Ambrosiogiglisav}, Proposition~\ref{prop:equicontinuity},
and the Arzel\'a-Ascoli theorem, that there exists $m\in C([0,T]; \P_1(\RR^d))$ such that, as $n\to\infty$ and up to some subsequence, $\mathsf{m}^n\to m$ in $C([0,T]; \P_1(\RR^d))$. Moreover, by Proposition~\ref{prop:stability}, the convergence holds weakly, if $\mathsf{p}<\infty$, and weakly$^*$ in $L^{\infty}([0,T]\times \RR^d)$, if $\mathsf{p}=\infty$, and $m$ satisfies~\eqref{eq:stability_limit_solution}. It remains to show that $m$ solves~\eqref{limit_continuity_equation_H}. Let $t\in]0,T]$ and let $k(n)\in\I_{\Delta t_n}^*$  be such that $t\in]t_{k(n)}, t_{k(n)+1}]$. For every $\varphi\in C_{0}^{\infty}(\RR^d)$, we have 
\be
\label{e:somme_telescopique}
\int_{\RR^d}\varphi(x)\mathsf{m}^n\left(t_{k(n)},x\right)\dd x=\int_{\RR^d}\varphi(x)  \mathsf{m}^n(0,x)\dd x+\sum_{k=0}^{k(n)-1}\int_{\RR^d}\varphi(x)\big(\mathsf{m}^n(t_{k+1},x)-\mathsf{m}^n(t_{k},x)\big)\dd x.
\ee

Let $k\in\I_{\Delta t}^{*}$. Since~\eqref{Phi_mu_eps_k} and~\eqref{eq:bounded_flow} yield $|\mathsf{\Phi}_k^{n}(x)-x| =O(\Delta t_n)$ for all $x\in\supp(\varphi)$, by Lemma~\ref{l:difference_integral_interpolation}, a Taylor expansion, and Proposition~\ref{prop:basic_properties_lg}\eqref{prop:basic_properties_lg_iv}, we have
\begin{align}
\int_{\RR^d}\varphi(x)\big(\mathsf{m}^n(t_{k+1},x)-\mathsf{m}^n(t_{k},x)\big)\dd x&=\int_{\RR^d}\big(\varphi(\mathsf{\Phi}_k^{n}(x))-\varphi(x)\big)\mathsf{m}^n(t_k,x)\dd x+O(\Delta x_n)\nonumber\\
&=-\Dt_n\int_{\RR^d}\big\langle D\varphi(x),D_pH(x,D_{x}v^{n}(t_k,x))\big\rangle\mathsf{m}^n(t_k,x)\dd x
+ O(\Delta x_n)\nonumber\\
&\hspace{0.3cm}+O((\Dt_n)^2),
\end{align}
which, combined with \eqref{e:somme_telescopique}, yields
\begin{align}
\int_{\RR^d}\varphi(x)\mathsf{m}^n(t_{k(n)},x)\dd x&=
\int_{\RR^d}\varphi(x)\mathsf{m}^n(0,x)\dd x-
\Delta t_n\sum_{k=0}^{k(n)-1}\int_{\RR^d}\big\langle D\varphi(x),D_pH(x,D_{x}v^{n}(t_k,x))\big\rangle\mathsf{m}^n(t_k,x)\dd x\nonumber\\
&\hspace{0.3cm}+O\left(\frac{\Delta x_n}{\Delta t_n}+ \Delta t_n\right).
\label{teofinec1} 
\end{align}
Since $\varphi$ has a compact support, it follows from ~\eqref{eq:bounds_higher_order_derivatives_eps}, with $\ell=2$, that there exists $C_{\varphi}>0$ such that $\big\langle D\varphi(\cdot), D_pH(\cdot,D_{x}v^n(t_k,\cdot))\big\rangle$ is $(C_{\varphi}/\eps_n)$-Lipschitz. Thus, by Proposition~\ref{prop:equicontinuity}, for every $k\in\I_{\Delta t}^{*}$, we have
\begin{multline} 
\left|\int_{\RR^d}\big\langle D\varphi(x),D_pH(x,D_{x}v^n(t_k,x))\big\rangle\big(\mathsf{m}^n(s,x)-\mathsf{m}^n(t_{k},x)\big)\dd x\right|= O\Bigg(\frac{\Dt_n}{\eps_n}\Bigg)\quad\text{for all } s\in [t_{k},t_{k+1}].
\end{multline}
Recalling that $D_{x}v^n(s,x)=D_{x}v^n(t_k,x)$ for all $s\in[t_k,t_{k+1}[$ and $x\in\RR^d$, we obtain
\begin{multline}
\Delta t_n\int_{\RR^d}\big\langle D\varphi(x),D_pH(x,D_{x}v^{n}(t_k,x))\big\rangle\mathsf{m}^n(t_k,x)\dd x\\
= \int_{t_{k}}^{t_{k+1}}\int_{\RR^d}\big\langle D\varphi(x),D_pH(x,D_{x}v^n(s,x))\big\rangle\mathsf{m}^n(s,x)\dd x\dd s+O\left(\frac{(\Delta t_{n})^2}{\eps_{n}}\right)
\end{multline}
and hence, in view of \eqref{teofinec1}, we deduce that, for $n$ large enough,
\begin{multline}
\label{pre_limit}
\int_{\RR^d}\varphi(x)\mathsf{m}^n\big(t_{k(n)},x\big)\dd x\\
=\int_{\RR^d}\varphi(x)\mathsf{m}^n(0,x)\dd x-\int_{0}^{T}\int_{\RR^d}\mathbb{I}_{[0,t_{k(n)}]} \big\langle D\varphi(x),D_pH(x,D_{x}v^n(s,x))\big\rangle\mathsf{m}^n(s,x)\dd x\dd s
+O\bigg(\frac{\Delta x_n}{\Delta t_n}+ \frac{\Delta t_n}{\eps_n}\bigg).
\end{multline}
Finally, by~\eqref{prop:stability_w_r_t_mu_n_i}, 
\be
\mathbb{I}_{[0,t_{k(n)}]}(\cdot)\big\langle D\varphi(\cdot),D_pH(x,D_{x}v^n(\cdot,\cdot))\big\rangle\underset{n\to\infty}{\longrightarrow}\mathbb{I}_{[0,t]}(\cdot)\big\langle D\varphi(\cdot),D_pH(x,D_{x}v[m](\cdot,\cdot))\big\rangle, 
\ee
in $L^{q}([0,T]\times\RR^d)$, for every $q\in[1,\infty[$, and, hence, we can pass to the limit in~\eqref{pre_limit} to obtain that $m$ satisfies \eqref{limit_continuity_equation_H}.
\end{proof}
\section{A Lagrange-Galerkin scheme for the the mean field games system}
\label{sec:LG_scheme_MFG}
In this section, we combine the schemes discussed in Sections~\ref{sec:sl_hjb} and~\ref{sec:lg_continuity_equation} to obtain a scheme for system~\eqref{MFG} and we provide a convergence result. 

Let $\Delta=(\Delta t,\Delta x,\eps)\in ]0,\infty[^{3}$, let $\widetilde{C}^{*}>0$ be as in Lemma~\ref{l:uniform_compact_support_discrete_solution}, and define
\be 
\label{def:sets_S_Delta}
\mathfrak{D}^{\Delta t,\Delta x}=\Bigg\{\mu=(\mu_{k,i})\,\Big|\,\mu_{k,i}\geq 0,\,\sum_{j\in\ZZ^d}\mu_{k,j}(\Delta x)^d=1\, \text{for all }k\in\I_{\Delta t},\,i\in \I_{\Delta x}\Bigg\},
\ee
where $\I_{\Delta x}$ is defined in~\eqref{d:i_delta_x}. Notice that $\mathfrak{D}^{\Delta t,\Delta x}$ is a convex and compact subset of $\RR^{(N_{\Delta t}+1)\times (2N_{\Delta x}+1)^{d}}$. Given $\mu\in\mathfrak{D}^{\Delta t,\Delta x}$ define $\tilde{\mu}\in C([0,T];\P_1(\RR^d))$ as
\begin{multline} 
\label{def:extension_mu}
\dd\tilde{\mu}(t)(x)= \bigg(\frac{t-t_k}{\Delta t}\bigg)\sum_{i\in\I_{\Delta x}}\mu_{k+1,i}\beta_{i}^{0}(x)\dd x + \bigg(\frac{t_{k+1}-t}{\Delta t}\bigg)\sum_{i\in\I_{\Delta x}}\mu_{k,i}\beta_{i}^{0}(x)\dd x\\
\text{for all }k\in\I_{\Delta t}^{*},\,t\in[t_{k},t_{k+1}[.
\end{multline}
The discretization of~\eqref{MFG} that we propose is the following: find $\mu\in\mathfrak{D}^{\Delta t,\Delta x}$ such that
\be
\label{MFGfully}
\mu_{k,i}=\mathsf{m}^{\Delta}[\tilde{\mu}](t_k,x_i)\quad\mbox{for all }k\in\I_{\Delta t}^*, \, i\in\I_{\Delta x}, \tag{$\text{{\rm MFG}}^{\Delta}$}
\ee
where we recall that $\mathsf{m}^{\Delta}[\tilde{\mu}]$ is defined in \eqref{m_delta_in_t}.

\begin{theorem} 
\label{th:solution_discrete_mfg}
Assume that {\bf(H\ref{h:h1})}-{\bf(H\ref{h:initial_condition})} hold. Then, if $\Delta t/\eps$ is small enough, system~\eqref{MFGfully} admits at least one solution. 
\end{theorem}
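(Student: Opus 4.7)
The plan is to apply Brouwer's fixed point theorem to the map
$T^{\Delta}:\mathfrak{D}^{\Delta t,\Delta x}\to\RR^{(N_{\Delta t}+1)\times(2N_{\Delta x}+1)^{d}}$ defined by
\[
T^{\Delta}(\mu)_{k,i}=\mathsf{m}^{\Delta}[\tilde{\mu}](t_{k},x_{i})\quad\text{for all }k\in\I_{\Delta t},\,i\in\I_{\Delta x},
\]
where $\tilde{\mu}$ is given by~\eqref{def:extension_mu}. Since $\mathfrak{D}^{\Delta t,\Delta x}$ is convex and compact, we only need to verify that $T^{\Delta}$ is well-defined (takes values in $\mathfrak{D}^{\Delta t,\Delta x}$) and continuous.

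\textbf{Well-definedness.} Non-negativity of $T^{\Delta}(\mu)$ follows from Proposition~\ref{prop:basic_properties_lg}\eqref{prop:basic_properties_lg_i}. Mass conservation $\sum_{j\in\ZZ^d}T^{\Delta}(\mu)_{k,j}(\Delta x)^{d}=1$ is a direct consequence of Proposition~\ref{prop:basic_properties_lg}\eqref{prop:basic_properties_lg_iv} and the definition~\eqref{d:lg_discrete_times}. Finally, choosing the discretization parameters so that $\Delta x\leq c\Delta t$ for a fixed $c>0$, Lemma~\ref{l:uniform_compact_support_discrete_solution} ensures that the support of $\mathsf{m}^{\Delta}[\tilde{\mu}](t_{k},\cdot)$ is contained in $\ov{B}_{\infty}(0,\widetilde{C}^{*})$, so that $T^{\Delta}(\mu)_{k,i}=0$ whenever $i\notin \I_{\Delta x}$. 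Hence $T^{\Delta}(\mu)\in\mathfrak{D}^{\Delta t,\Delta x}$.

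\textbf{Continuity.} I will decompose $T^{\Delta}$ as a composition of continuous maps. First, $\mu\mapsto\tilde{\mu}$ is continuous from $\mathfrak{D}^{\Delta t,\Delta x}$ to $C([0,T];\P_{1}(\RR^{d}))$ by~\eqref{def:extension_mu}. Second, the recursion~\eqref{SL_HJB}-\eqref{eq:SL_scheme} yields continuity of $\tilde{\mu}\mapsto v^{\Delta t,\Delta x}[\tilde{\mu}]|_{\G_{\Delta t}\times\G_{\Delta x}}$, since $\SS^{\text{{\rm fd}}}[\tilde{\mu}]$ depends on $\tilde{\mu}$ only through the continuous maps $\tilde{\mu}\mapsto F(x_{i},\tilde{\mu}(t_{k}))$ and $\tilde{\mu}\mapsto G(x_{i},\tilde{\mu}(T))$, and the infimum in~\eqref{eq:SL_scheme} is over a compact set and is jointly continuous in $a$ and $\phi$. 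Then, by the continuity and boundedness of the derivatives of $\rho_{\eps}$, the mollification step~\eqref{eq:v_epsilon} gives that $v^{\Delta}[\tilde{\mu}]$ and its spatial derivative $D_{x}v^{\Delta}[\tilde{\mu}]$ depend continuously on $\tilde{\mu}$ uniformly on compact subsets of $[0,T]\times\RR^{d}$. In turn, $\tilde{\mu}\mapsto\mathsf{\Phi}_{k}^{\Delta}[\tilde{\mu}]$ is continuous in the $C^{0}$ topology on bounded sets, and the initial data $m_{0,i}$ in~\eqref{lg_scheme_initial_condition} are independent of $\mu$.

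\textbf{The main obstacle.} The delicate point is the continuity of
\[
\tilde{\mu}\longmapsto \int_{E_{j}}\beta_{i}^{0}\big(\mathsf{\Phi}_{k}^{\Delta}[\tilde{\mu}](x)\big)\dd x=\L^{d}\Big(E_{j}\cap (\mathsf{\Phi}_{k}^{\Delta}[\tilde{\mu}])^{-1}(E_{i})\Big),
\]
because $\beta_{i}^{0}$ is discontinuous on $\partial E_{i}$. To handle this, arguing as in the proof of Proposition~\ref{prop:stability}, I would exploit that on a fixed bounded set containing all the relevant $E_{j}$'s the Jacobian $D_{x}\mathsf{\Phi}_{k}^{\Delta}[\tilde{\mu}](x)=I_{d}-\Delta t\,D_{x}[D_{p}H(x,D_{x}v^{\Delta}[\tilde{\mu}](t_{k},x))]$ satisfies, thanks to~\eqref{eq:bounds_higher_order_derivatives_eps} with $\ell=2$, a bound of the form $\|D_{x}\mathsf{\Phi}_{k}^{\Delta}[\tilde{\mu}]-I_{d}\|\leq C\Delta t/\eps$. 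Hence, if $\Delta t/\eps$ is small enough, $\mathsf{\Phi}_{k}^{\Delta}[\tilde{\mu}]$ is a $C^{1}$ diffeomorphism onto its image on this bounded set, uniformly in $\tilde{\mu}$. In particular, $(\mathsf{\Phi}_{k}^{\Delta}[\tilde{\mu}])^{-1}(\partial E_{i})$ has Lebesgue measure zero, so that $x\mapsto\beta_{i}^{0}(\mathsf{\Phi}_{k}^{\Delta}[\tilde{\mu}](x))$ is continuous almost everywhere on $E_{j}$. If $\mu_{n}\to\mu$ in $\mathfrak{D}^{\Delta t,\Delta x}$, then $\mathsf{\Phi}_{k}^{\Delta}[\tilde{\mu}_{n}]\to\mathsf{\Phi}_{k}^{\Delta}[\tilde{\mu}]$ uniformly on $E_{j}$, and the dominated convergence theorem yields the required continuity. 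This smallness condition on $\Delta t/\eps$ is exactly the hypothesis of the theorem.

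\textbf{Conclusion.} Having established that $T^{\Delta}$ is a continuous self-map of the convex compact set $\mathfrak{D}^{\Delta t,\Delta x}$, Brouwer's fixed point theorem yields $\mu\in\mathfrak{D}^{\Delta t,\Delta x}$ with $T^{\Delta}(\mu)=\mu$, which is precisely a solution to~\eqref{MFGfully}.
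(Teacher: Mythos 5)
Your proposal is correct and follows essentially the same route as the paper: the same fixed-point map on $\mathfrak{D}^{\Delta t,\Delta x}$, the same verification of self-mapping via Proposition~\ref{prop:basic_properties_lg}, the same identification of the discontinuity of $\beta_{i}^{0}$ on $\partial E_{i}$ as the delicate point, resolved by the same diffeomorphism argument (borrowed from the proof of Proposition~\ref{prop:stability}, requiring $\Delta t/\eps$ small) showing $\L^{d}\big((\mathsf{\Phi}_{k}^{\Delta}[\tilde{\mu}])^{-1}(\partial E_{i})\big)=0$, followed by dominated convergence and Brouwer. No substantive differences.
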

\begin{proof}Consider the application $T\colon\mathfrak{D}^{\Delta t,\Delta x}\to \RR^{(N_{\Delta t}+1)\times (2N_{\Delta x}+1)^{d}}$ defined by
$$
(T(\mu))_{k,i}=\mathsf{m}^{\Delta}[\tilde{\mu}](t_k,x_i)\quad\mbox{for all }k\in\I_{\Delta t}^*, \, i\in\I_{\Delta x}.
$$
It follows from Proposition~\ref{prop:basic_properties_lg}\eqref{prop:basic_properties_lg_i},\eqref{prop:basic_properties_lg_ii},\eqref{prop:basic_properties_lg_iv} that $T(\mathfrak{D}^{\Delta t,\Delta x})\subseteq\mathfrak{D}^{\Delta t,\Delta x}$. Moreover, if $(\mu_{n})_{n\in\NN}\subset\mathfrak{D}^{\Delta t,\Delta x}$ converges to $\mu$ then the continuity of $L$, $F$, and $G$,  imply that, as $n\to\infty$,  $v_{k,i}^{\Delta t,\Delta x}[\mu_n]\to
v_{k,i}^{\Delta t,\Delta x}[\mu]$ for all $k\in\I_{\Delta t}$ and $i\in\ZZ^d$. Thus, $(v^{\Delta t,\Delta x}[\mu_n])_{n\in\NN}$, defined in~\eqref{def:v_delta_t_delta_x}, converges to $v^{\Delta t,\Delta x}[\mu]$ pointwisely and hence, by Lebesgue's dominated convergence, the sequence $(v^{\Delta}[\mu_n])_{n\in\NN}$, defined in~\eqref{eq:v_epsilon}, satisfies that $v^{\Delta}[\mu_n]\to v^{\Delta}[\mu]$ and $D_{x}v^{\Delta}[\mu_n]\to D_{x}v^{\Delta}[\mu]$ pointwisely. Consequently, given $k\in\I_{\Delta t}^{*}$, it follows from~\eqref{Phi_mu_eps_k} that $\mathsf{\Phi}_{k}^{\Delta}[\mu_n]\to\mathsf{\Phi}_{k}^{\Delta}[\mu]$ pointwisely.  In particular, $\beta_{i}^{0}(\mathsf{\Phi}_{k}^{\Delta}[\mu_n](x))\to \beta_{i}^{0}(\mathsf{\Phi}_{k}^{\Delta}[\mu](x))$ for all $x\in \RR^d\setminus\big(\mathsf{\Phi}_{k}^{\Delta}[\mu]^{-1}(\partial E_{i})\big)$. If $R>0$ is as in the proof of Proposition~\ref{prop:stability} and $\Delta t/\eps$ is small enough, we have that $\mathsf{\Phi}_{k}^{\Delta}[\mu]$ is a diffeomorphism of $B_{\infty}(0,R)$ onto $\mathsf{\Phi}_{k}^{\Delta}[\mu](B_{\infty}(0,R))$.  Therefore, since $\L^{d}(\partial E_i)=0$,  we have $\L^{d}\big(\mathsf{\Phi}_{k}^{\Delta}[\mu]^{-1}(\partial E_{i})\big)=0$ and hence $\beta_{i}^{0}(\mathsf{\Phi}_{k}^{\Delta}[\mu_n](x))\to \beta_{i}^{0}(\mathsf{\Phi}_{k}^{\Delta}[\mu](x))$ for almost every $x\in\RR^d$. Therefore, by Lebesgue's dominated convergence,
$$
\int_{E_{j}}\beta_{i}^{0}\big(\mathsf{\Phi}_{k}^{\Delta}[\mu_n](x)\big)\dd x
\underset{n\to\infty}{\longrightarrow} \int_{E_{j}}\beta_{i}^{0}\big(\mathsf{\Phi}_{k}^{\Delta}[\mu](x)\big)\dd x\quad\text{for all }k\in\I_{\Delta t}^{*}, \, i,\, j\in \ZZ^d.
$$
Altogether, it follows from~\eqref{lg_scheme} that, as $n\to\infty$, $T(\mu_n)\to T(\mu)$, i.e. $T$ is continuous. Finally, the existence of a solution to \eqref{MFGfully}, i.e. of a fixed point of $T$, follows from Brouwer's fixed-point theorem.
\end{proof}

In the next result we provide our main result, which shows the convergence, up to some subsequence, of solutions to~\eqref{MFGfully} towards a solution to~\eqref{MFG}.

\begin{theorem}
\label{th:convergence_discrete_scheme}
Assume {\bf(H\ref{h:h1})}-{\bf(H\ref{h:initial_condition})}, let
$\big((\Delta t_n, \Delta x_n,\eps_n)\big)_{n\in\NN}\subset ]0,\infty[^3$, and set $\Delta_n=(\Delta t_n,\Delta x_n,\eps_n)$. Suppose that, as $n\to\infty$, $\Delta_n\to 0$, $\Delta x_n=o(\Delta t_n)$, and $\Delta t_n=O(\eps_n^2)$. For every $n$, large enough, let 
$m^{n}\in \SS^{\Delta_n}$ be a solution to $(\text{{\rm MFG}}^{\Delta_{n}})$, define $\tilde{m}^{n}$  by~\eqref{def:extension_mu}, and set $v^{n}=v^{\Delta_n}[\tilde{m}^n]$. Then there exists a solution $(v^*,m^*)$ to~\eqref{MFG} such that, up to some subsequence, the following hold:
\begin{enumerate}[{\rm(i)}]
\item 
\label{th:convergence_discrete_scheme_i}
$(v^{n})_{n\in\NN}$ converges to $v^*$, uniformly over compact subsets of $[0,T]\times\RR^d$.
\item
\label{th:convergence_discrete_scheme_ii}
$(\tilde{m}^{n})_{n\in\NN}$ converges in $C([0,T];\P_1(\RR^d))$ towards $m^*$. Moreover, the convergence also hold weakly in $L^{\mathsf{p}}([0,T]\times\RR^d)$, if $\mathsf{p}<\infty$, and weakly$^*$ in $L^{\infty}([0,T]\times\RR^d)$, if $\mathsf{p}=\infty$. In addition, there exists $\widetilde{C}>0$ such that 
\be 
\label{eq:stability_limit_solution_mfg_case}
\|m^{*}(t,\cdot) \|_{L^{\mathsf{p}}(\RR^{d})} \leq\widetilde{C}\|m_{0}^*\|_{L^{\mathsf{p}}(\RR^{d})}\quad\text{for all }t\in[0,T].
\ee 
\end{enumerate}
\end{theorem}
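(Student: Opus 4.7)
The plan is to combine the compactness established in Section~\ref{sec:lg_continuity_equation} for the family $\{\tilde{m}^n\}$ with the observation that the fixed-point structure of~\eqref{MFGfully} forces the identity $\tilde{m}^n \equiv \mathsf{m}^{\Delta_n}[\tilde{m}^n]$; once this is in hand, the passage to the limit reduces almost entirely to one application of Proposition~\ref{prop:stability_w_r_t_mu_n}.

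First, I would establish the identity $\tilde{m}^n \equiv \mathsf{m}^{\Delta_n}[\tilde{m}^n]$ on $[0,T]\times\RR^d$. At each discrete time $t_k$ the expansion~\eqref{d:lg_discrete_times} gives $\mathsf{m}^{\Delta_n}[\tilde{m}^n](t_k,x) = \sum_i m^n_{k,i}\beta_i^0(x)$, where $\{m^n_{k,i}\}$ are the LG outputs produced by~\eqref{lg_scheme}-\eqref{lg_scheme_initial_condition} with velocity field coming from $\tilde{m}^n$. The fixed-point relation $m^n_{k,i} = \mathsf{m}^{\Delta_n}[\tilde{m}^n](t_k,x_i)$ and definition~\eqref{def:extension_mu} then show that $\tilde{m}^n(t_k,\cdot)$ admits the same expansion $\sum_i m^n_{k,i}\beta_i^0$. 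Since both~\eqref{m_delta_in_t} and~\eqref{def:extension_mu} extend in $t$ by the identical linear interpolation between grid values, this equality propagates to all $t\in[0,T]$.

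Next, I would extract a convergent subsequence. The hypothesis $\Delta x_n = o(\Delta t_n)$ implies that, for $n$ large enough, $\Delta x_n \le \Delta t_n$, so Proposition~\ref{prop:basic_properties_lg}(ii) yields a uniform compact support $\ov{B}_\infty(0,\widetilde{C}^*)$ for $\tilde{m}^n(t,\cdot)$, giving relative compactness in $\P_1(\RR^d)$ via~\cite[Proposition~7.1.5]{Ambrosiogiglisav}. Combined with the equicontinuity of $\tilde{m}^n\colon[0,T]\to (\P_1(\RR^d),d_1)$ provided by Proposition~\ref{prop:equicontinuity}, the Arzel\`a-Ascoli theorem delivers a (non-relabelled) subsequence and $m^*\in C([0,T];\P_1(\RR^d))$ with $\tilde{m}^n\to m^*$ in $C([0,T];\P_1(\RR^d))$.

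Finally, I would apply Proposition~\ref{prop:stability_w_r_t_mu_n} with $\mu_n := \tilde{m}^n$ and limit $\mu := m^*$; the scaling assumptions of the theorem match those required there. Part~(i) of that proposition yields $v^n = v^{\Delta_n}[\tilde{m}^n]\to v[m^*]$ uniformly on compact subsets of $[0,T]\times\RR^d$, which is assertion~(i) with $v^* := v[m^*]$. Part~(ii), together with the identity from Step~1, shows that $m^*$ itself satisfies~\eqref{limit_continuity_equation_H} with $\mu = m^*$, and supplies the weak $L^{\mathsf{p}}$ (or weak-$*$ $L^{\infty}$) convergence of $\tilde{m}^n$ to $m^*$ along with the bound~\eqref{eq:stability_limit_solution_mfg_case}. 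Thus $(v^*,m^*)$ solves~\eqref{MFG}, completing both assertions. I expect the only genuinely nontrivial ingredient to be the identity in Step~1: it closes the fixed-point loop so that the compactness and stability results of Section~\ref{sec:lg_continuity_equation} deliver an MFG solution directly, with no separate limit argument required for the HJB half of the system.
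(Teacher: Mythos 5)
Your proposal is correct and follows essentially the same route as the paper's proof: the fixed-point identity $\tilde{m}^n=\mathsf{m}^{\Delta_n}[\tilde{m}^n]$ (which the paper states without the elaboration you give), compactness via Proposition~\ref{prop:basic_properties_lg}\eqref{prop:basic_properties_lg_ii}, Proposition~\ref{prop:equicontinuity}, and Arzel\`a--Ascoli, and then a single application of Proposition~\ref{prop:stability_w_r_t_mu_n} with $\mu_n=\tilde{m}^n$ and $\mu=m^*$ to conclude that $(v[m^*],m^*)$ solves~\eqref{MFG} and to obtain both convergence assertions and the bound~\eqref{eq:stability_limit_solution_mfg_case}.
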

\begin{proof} For all $n\in\NN$, large enough, we have $\tilde{m}^{n}=\mathsf{m}^{\Delta_n}[\tilde{m}^{n}]$. Thus, Proposition~\ref{prop:basic_properties_lg}\eqref{prop:basic_properties_lg_ii},~\cite[Proposition~7.1.5]{Ambrosiogiglisav}, Proposition~\ref{prop:equicontinuity}, and the Arzel\'a-Ascoli theorem, imply the existence of $m^*\in C([0,T]; \P_1(\RR^d))$ and  subsequence, still labelled by $n$, such that  $(\tilde{m}^n)_{n\in \NN}$ converges to $m^*$ in $C([0,T]; \P_1(\RR^d))$. It follows from Proposition~\ref{prop:stability_w_r_t_mu_n} that $m^*$ solves~\eqref{limit_continuity_equation_H}, with $\mu=m^*$, i.e. $(v[m^{*}],m^{*})$ solves~\eqref{MFG}. Therefore, assertions~\eqref{th:convergence_discrete_scheme_i}-\eqref{th:convergence_discrete_scheme_ii} follow from the corresponding assertions in Proposition~\ref{prop:stability_w_r_t_mu_n}.
\end{proof}
\begin{remark}
\label{rem:main_result}
\begin{enumerate}[{\rm(i)}]
\item
\label{rem:main_result_i} 
Theorem~\ref{th:convergence_discrete_scheme} shows, in particular, that system~\eqref{MFG} admits at least one solution $(v^*,m^*)$. If the solution to~\eqref{MFG} is unique, then the entire sequence $(v^{n},m^{n})$ converges to 
$(v^*,m^*)$ {\rm(}see Theorem~\ref{prop:uniqueness_mfg_monotone_couplings} for a sufficient condition ensuring uniqueness{\rm)}.
\item The condition on $\Delta x_{n}=o(\Delta t_{n})$ in Theorem~\ref{th:convergence_discrete_scheme} is stronger than the condition $(\Delta x_{n})^{2}=o(\Delta t_{n})$ needed for convergence, when the space dimension is equal to one, in the scheme studied in~\cite{MR3148086} {\rm(}see also~\cite{Chowdhury_et_al_2022}{\rm)}. This can be explained by the estimate~\eqref{e:estimate_int_phi_m_two_steps} in Lemma~\ref{l:difference_integral_interpolation}, which seems difficult to improve, even if $\varphi$ is smooth, and it is in compliance with Assumption~3.1 in~\cite{MR2826759}, which plays an important role in the LG approximation of continuity equations with Lipschitz vector fields. 
\label{rem:main_result_ii} 
\end{enumerate}
\end{remark}

\section{Numerical results}
\label{sec:num_results}
In this section, given $\Delta=(\Delta t,\Delta x,\eps)$, we use~\eqref{MFGfully} to approximate the solutions to two first order MFGs systems. In order to obtain an implementable version, we need to approximate the integrals in the LG scheme \eqref{lg_scheme}-\eqref{lg_scheme_initial_condition}. We consider two methods. In the first  one, the integrals are approximated by numerical quadrature, while, in the second one, we use the so-called {\it area weighting} technique, introduced in~\cite{morton} and recalled in Section~\ref{sec:area_weighting} below.   

In the first example, the state dimension is equal to one and the data of the MFGs system does not satisfy some of the assumptions in Section~\ref{sec:assumptions}. On the other hand, the PDE system admits an explicit solution, which allows to compare the quadrature and area weighting methods to solve~\eqref{MFGfully}. For comparable accuracies, the area weighting method is less expensive than the quadrature method and, hence, we use the former in order to treat the second example, where the state dimension is equal to two and no explicit solution is known. Let us point out that the data of the second example fulfills all the assumptions in Section~\ref{sec:assumptions}. 

We solve ~\eqref{MFGfully} heuristically by fixed point iterations that  are stopped as soon as the uniform norm of the difference between two consecutive iterates is smaller than a given threshold $\tau$, which in the simulations is set to $10^{-3}$. In particular, we use the classical Picard iterations in the first test, as in~\cite{MR3148086}, and Picard iterations with damping parameter 0.5 in the second test, as in  ~\cite{Chowdhury_et_al_2022}.

\subsection{Area-weighted LG approximation}
\label{sec:area_weighting}
Let $\mu\in C([0,T];\P_{1}(\RR^{d}))$ and consider the continuity equation~\eqref{continuity_equation_H}. The main idea of the area-weighting technique is to replace, for each $k\in\I_{\Delta x}^{*}$, the local nonlinear discrete flow $E_{i}\ni x\mapsto \mathsf{\Phi}_{k}^{\Delta}[\mu](x)\in\RR^{d}$, defined by~\eqref{Phi_mu_eps_k}, by the local affine approximation 
\be 
\label{eq:def_area_weighting}
E_{i}\ni x\mapsto \ov{\mathsf{\Phi}}_{k}^{\Delta}[\mu](x)=x-\Delta t D_{p}H\big(x_{i},D_{x}v^{\Delta}[\mu](t_k,x_{i})\big)\in\RR^{d}.
\ee
Notice that $\ov{\mathsf{\Phi}}_{k}^{\Delta}[\mu](x)=x-x_{i}+\mathsf{\Phi}_{k}^\Delta[\mu](x_{i})$ for all $x\in E_{i}$. Under this approximation, we can compute the integrals in~\eqref{lg_scheme}-\eqref{lg_scheme_initial_condition} explicitly. Indeed, for all $i=(i_{1},\hdots,i_{d})\in\ZZ^d$ and $l=1,\hdots,d$,  let us set
$\mathrm{T}_{i_{l}}=[(x_{i})_{l}-\Delta x/2,(x_{i})_{l}+\Delta x/2]$, and observe that 
\be
\label{def_beta}
\beta^{0}_{i}(y)=\prod^{d}_{l=1}\mathbb{I}_{\mathrm{T}_{i_{l}}}(y_{l})\quad\text{for all }y=(y_{1},\hdots,y_{d})\in\RR^{d}. 
\ee
It  follows from~\eqref{eq:def_area_weighting} and~\eqref{def_beta} that, for every $i,\,j\in\ZZ^{d}$, we have
\begin{align}
&\int_{E_{j}}\beta_{i}^{0}(\ov{\mathsf{\Phi}}_{k}^{\Delta}[\mu](y))\dd y
=\int_{E_j}\beta_{i}^{0}(y-x_j+\mathsf{\Phi}^{\Delta}_{k}[\mu](x_{j}))\dd y\nonumber\\ &=\prod_{l=1}^{d} \int_{(x_{j})_{l}-\Delta x/2}^{(x_{j})_{l}+\Delta x/2}\mathbb{I}_{\mathrm{T}_{i_{l}}} \left(y_{l}-(x_{j})_{l}+\big(\mathsf{\Phi}^{\Delta}_{k}[\mu](x_{j})\big)_{l}\right)\dd y_{l}
=\prod_{l=1}^{d}\int_{\big(\mathsf{\Phi}^{\Delta}_{k}[\mu](x_{j})\big)_{l}-\Delta x/2}^{\big(\mathsf{\Phi}^{\Delta}_{k}[\mu](x_{j})\big)_{l}+\Delta x/2}\mathbb{I}_{\mathrm{T}_{i_{l}}}(y_{l})\dd y_{l}\label{eq:produit_tensoriel_integral}\\
&=\prod_{l=1}^{d} \L^1\bigg(\left[(x_{i})_{l}-\Delta x/2,(x_{i})_{l}+\Delta x/2\right]\cap \left[(\mathsf{\Phi}^{\Delta}_{k}[\mu](x_{j}))_{l}-\Delta x/2,(\mathsf{\Phi}^{\Delta}_{k}[\mu](x_{j}))_{l}+\Delta x/2\right]\bigg).
\nonumber
\end{align}
On the other hand, for every $l=1,\hdots,d$, it follows from~\eqref{def:beta_i_ref} that 
\begin{align*}
&\L^1\bigg(\left[(x_{i})_{l}-\Delta x/2,x_{i})_{l}+\Delta x/2\right]\cap \left[(\mathsf{\Phi}^{\Delta}_{k}[\mu](x_{j}))_{l}-\Delta x/2,(\mathsf{\Phi}^{\Delta}_{k}[\mu](x_{j}))_{l}+\Delta x/2\right]\bigg)\\
&= \begin{cases} 
\Delta x+\big(\mathsf{\Phi}^{\Delta}_{k}[\mu](x_{j})\big)_{l}- (x_{i})_{l}& \text{if } 
\big(\mathsf{\Phi}^{\Delta}_{k}[\mu](x_{j})\big)_{l}\in [(x_{i})_{l}-\Delta x,(x_{i})_{l}],\\ 
 \Delta x
+(x_{i})_{l}-\big(\mathsf{\Phi}^{\Delta}_{k}[\mu](x_{j})\big)_{l} & \text{if }
 \big(\mathsf{\Phi}^{\Delta}_{k}[\mu](x_{j})\big)_{l}\in](x_{i})_{l},(x_{i})_{l}+\Delta x],\\
  0 & \text{otherwise},
 \end{cases}
\\
&=\Delta x \widehat\beta\left(\big(\mathsf{\Phi}^{\Delta}_{k}[\mu](x_{j})\big)_{l}/{\Delta x}-i_{l}\right),
\end{align*}
which, combined with~\eqref{def:beta_i_1} and~\eqref{eq:produit_tensoriel_integral}, yields 
\be
\label{eq:int}
\frac{1}{(\Dx)^d}\int_{E_{j}}\beta_{i}^{0}(\ov{\mathsf{\Phi}}_{k}^{\Delta}[\mu](y))\dd y=\beta^1_i(\mathsf\Phi^\Delta_k[\mu](x_j)).
\ee
Thus,  replacing $\mathsf{\Phi}_{k}^{\Delta}[\mu]$ by $\ov{\mathsf{\Phi}}_{k}^{\Delta}[\mu]$ in~\eqref{lg_scheme} and, for every $i\in\ZZ^{d}$, denoting by $m_{0,i}^{*}$ any approximation of $\int_{E_{i}}m_{0}^{*}(x)\dd x/(\Delta x)^{d}$, we obtain the following area-weigthed LG version of ~\eqref{lg_scheme}-\eqref{lg_scheme_initial_condition}:
\begin{align}
\ov{m}_{k+1,i}&=\sum_{j\in\ZZ^{d}}\ov{m}_{k,j}\beta^{1}_{i}(\mathsf\Phi^\Delta_{k}[\mu](x_{j}))\quad\text{for all }k\in\I_{\Delta t}^{*},\,i\in\ZZ^d,\nonumber\\
\ov m_{0,i}&=m^{*}_{0,i}\quad\text{for all }i\in \ZZ^d.
\label{sl_scheme}
\end{align}
\begin{remark}
\label{rem:area_weighting}
Notice that~\eqref{sl_scheme} corresponds to the scheme proposed in~\cite{MR3148086} for the continuity equation~\eqref{continuity_equation_H}. Therefore, the latter can be seen as an area-weighted version of the LG scheme of Section~\ref{sec:lg_continuity_equation}.
\end{remark}
\subsection{Non-local MFG with analytical solution.}
\label{sec:example_explicit_solution}
We consider system~\eqref{MFG} with a quadratic Hamiltonian $H(x,p)=\frac{p^2}{2}$, coupling terms
\begin{equation}
\label{eq:couplings_lq_example}
F(x,\nu)=\frac{1}{2}\Big(x-\int_{\RR^d}y\,\dd \nu(y)\Big)^2,\quad G(x,\nu)=0,
\end{equation}
and initial  data $m_0^*$ given by the distribution of a $d$-dimensional Gaussian random variable with mean $\mu^*\in\RR^d$ and covariance matrix $\Sigma_0\in\RR^{d\times d}$ assumed, for simplicity, to be diagonal. Notice that the coupling term $F$ in~\eqref{eq:couplings_lq_example} and the initial distribution $m_0^*$ do not satisfy assumptions {\bf(H2)} and {\bf(H3)}, respectively. On the other hand, the MFG system admits in this case an explicit solution, which allows to compare the performance of quadrature and area-weighting methods to approximate the continuity equation. Indeed, setting 
$$
\Pi(t)=\ds\left(\frac{e^{2T-t}-e^t}{e^{2T-t}+e^t}\right)I_d,\quad
s(t)=-\Pi(t)\mu^*,\quad 
c(t)=\frac{1}{2}\langle\Pi(t)\mu^*,\mu^*\rangle\quad\text{for all }t\in[0,T],
$$
and arguing as in~\cite[Section~5.2]{Calzola_Carlini_Silva_lg_second_order},
one finds that~\eqref{MFG} admits a unique solution $(v^{*},m^{*})$ given by 
$$
v^{*}(t,x)=\ds\frac{1}{2}\langle\Pi(t)x,x\rangle+\langle s(t),x\rangle +c(t)\quad\text{for all }(t,x)\in[0,T]\times\RR^{d}
$$
and, for every $t\in[0,T]$, $m^{*}(t)$ is the joint distribution of $d$ independent Gaussian random variables $\{X_{l}(t)\,|\,l=1,\hdots,d\}$ with means $\mu_{\ell}(t)$ and variances $\sigma^{2}_{\ell}(t)$ ($l=1,\hdots,d$), given by
$$
\mu_{\ell}(t)=\mu^{*}_{l}\quad\text{and}\quad\sigma^{2}_{\ell}(t)=\Bigg(\frac{e^{2T-t}+e^t}{e^{2T}+1}\Bigg)^{2}(\Sigma_{0})_{l,l}.
$$

In the numerical test, we take $T=0.25$, $d=1$, $\mu^{*}=0.1$, and $\Sigma_{0}=0.105$. Since the exact solution $m^{*}(t)$ does not have a compact support, we approximate the system on the bounded domain $\OO= ]-2,2[$ and we impose Dirichlet boundary conditions at $x=-2$ and $x=2$, which are equal to the values of the exact solution at these points. In order to implement the latter, we proceed as in~\cite[Section 5.1.5]{falconeferretilibro}. 

We test our scheme for different values of $\Delta x$, the time step is chosen as $\Dt=(\Dx)^{2/3}/2$, and the mollifier in \eqref{eq:v_epsilon} is defined with $\RR\ni x\mapsto \rho(x)=e^{-x^{2}/2}/\sqrt{2\pi}\in\RR$ and $\eps=\sqrt{\Dt}$. We denote by $(v^{\Delta},\mathsf{m}^{\Delta})$ and $(\ov{v}^{\Delta},\ov{\mathsf{m}}^{\Delta})$ the approximations of solutions to~\eqref{MFGfully} obtained by estimating the integrals in~\eqref{lg_scheme} by numerical quadrature and by the area-weighting method, respectively. For the numerical quadrature of the integrals in the computation of $(v^{\Delta},\mathsf{m}^{\Delta})$, we divide each interval $E_{i}$ into $\lfloor 4/\Dx\rfloor$ subintervals and we use the midpoint rule on each one of them. The initial condition $m_{0}^{*}$ being smooth, we use the midpoint rule to approximate the integrals in~\eqref{lg_scheme_initial_condition}.

Setting $\G_{\Dx}(\ov{\OO}):=\G_{\Dx}\cap\ov{\OO}$, Tables~\ref{table_1} and~\ref{table_2} below show the uniform and $L^2$ relative discrete errors  
\be
\label{eq:errors}
\displaystyle
E_\infty(h^\Delta)=\frac{\underset{x_i\in\G_{\Dx}(\Omega)}\max|h^\Delta(x_i)-h(x_i)|}{\underset{x_i\in\G_{\Dx}(\Omega)}\max|h(x_i)|},\quad E_2(h_\Delta)=\left(\frac{\underset{x_i\in\G_{\Dx}(\Omega)}\sum |h^\Delta(x_i)-h(x_i)|^2}  {\underset{x_i\in\G_{\Dx}(\Omega)}\sum|h(x_i)|^2}\right)^\frac{1}{2},
\ee
for $(h,h^{\Delta})=(m^{*}(T,\cdot),\mathsf{m}^{\Delta}(T,\cdot))$, $(m^{*}(T,\cdot),\ov{\mathsf{m}}^{\Delta}(T,\cdot))$, $(v^{*}(0,\cdot),v^{\Delta}(0,\cdot))$,
and $(v^{*}(0,\cdot),\ov{v}^{\Delta}(0,\cdot))$. Our results show smaller errors for the approximations computed with numerical quadrature, specially in the uniform norm. However, such precision is achieved at the expense of a high computational cost compared with the area-weighted approximation. Table~\ref{table_2} shows that the higher precision obtained by computing an approximation of $m^*$ by numerical quadrature does not significantly affect the approximation of the value function $v^{*}$.

\begin{center}
\begin{table}[hbt!]
\captionsetup{skip=5pt} 
\begin{tabular}{ |c||c||c||c||c|}
 \hline 
 \rule{0pt}{12pt} & & & & \\[-14pt]
 $\Dx$ & $E_\infty(\mathsf{m}^{\Delta}(T,\cdot))$ & $E_\infty(\ov{\mathsf{m}}^{\Delta}(T,\cdot))$ &$E_{2}(\mathsf{m}^{\Delta}(T,\cdot))$ &$E_2(\ov{\mathsf{m}}^{\Delta}(T,\cdot))$ \\[1pt]
 \hline
4.80 $\cdot 10^{-2}$   & 8.41 $\cdot 10^{-3}$  & 3.69 $\cdot 10^{-2}$  &   6.30 $\cdot 10^{-3}$ & 1.09  $\cdot 10^{-2}$  \\

2.40 $\cdot 10^{-2}$  & 6.91 $\cdot 10^{-3}$   & 3.25 $\cdot 10^{-2}$   & 4.39 $\cdot 10^{-3}$ & 1.05 $\cdot 10^{-2}$ \\

1.20 $\cdot 10^{-2}$  &3.94 $\cdot 10^{-3}$  & 2.62$\cdot 10^{-2}$&  2.77 $\cdot 10^{-3}$  & 6.77 $\cdot 10^{-3}$ \\

6.00 $\cdot 10^{-3}$   &1.83 $\cdot 10^{-3}$  & 2.44 $\cdot 10^{-2}$&  6.89 $\cdot 10^{-4}$ & 2.67 $\cdot 10^{-3}$\\
 \hline
\end{tabular}
\caption{Errors for the approximation of $m^{*}(T,\cdot)$.}
\label{table_1}
\end{table}
\end{center}
\begin{center}
\begin{table}[hbt!]
\captionsetup{skip=5pt} 
\begin{tabular}{ |c||c||c||c||c|}
 \hline
\rule{0pt}{12pt} & & & & \\[-14pt]
  $\Dx$ & $E_\infty( v^{\Delta}(0,\cdot))$ & $E_\infty(\ov{v}^{\Delta}(0,\cdot))$ & $E_2(v^{\Delta}(0,\cdot))$ & $E_2( \ov{v}^{\Delta}(0,\cdot))$\\[1pt]
 \hline
4.80 $\cdot 10^{-2}$   & 7.02 $\cdot 10^{-3}$  & 7.11 $\cdot 10^{-3}$  &   6.20 $\cdot 10^{-3}$ & 6.31  $\cdot 10^{-3}$  \\

2.40 $\cdot 10^{-2}$  & 5.74 $\cdot 10^{-3}$   & 5.82 $\cdot 10^{-3}$   & 4.90 $\cdot 10^{-3}$ & 5.12 $\cdot 10^{-3}$\\

1.20 $\cdot 10^{-2}$  &4.34 $\cdot 10^{-3}$  & 4.37 $\cdot 10^{-3}$&  3.70 $\cdot 10^{-3}$  & 3.75 $\cdot 10^{-3}$ \\

6.00 $\cdot 10^{-3}$   &3.30 $\cdot 10^{-3}$  & 3.36 $\cdot 10^{-3}$&  2.95 $\cdot 10^{-3}$ & 3.01 $\cdot 10^{-3}$\\
 \hline
\end{tabular}
\caption{Errors for the approximation of $v^{*}(0,\cdot)$.}
\label{table_2}
\end{table}
\end{center}
\subsection{A two-dimensional example} In this test, we consider system~\eqref{MFG} with $d=2$, a quadratic Hamiltonian $H(x,p)=|p|^{2}/2$,  and coupling terms having the form 
\be
\label{eq:coupling_terms_example_two_d}
F(x,\nu)= \gamma\min\{|x-\bar{x}|^{2},R\}+ (r_{\sigma}*\nu)(x)\quad\text{and}\quad G(x,\nu)=0,
\ee
where $\gamma>0$, $\bar{x}\in\RR^{2}$, $R>0$, and, for $\sigma>0$, $r_{\sigma}(x)=e^{-|x|^2/2\sigma^{2}}/(2\pi\sigma^{2})$ for all $x\in\RR^{2}$. Given $\ell>0$, $x_{0}^{*}\in]0,\ell[^{2}$, and $\sigma_{0}>0$, we consider the initial density
\be 
\label{eq:m_0_second_example}
m_{0}^{*}(x)= \frac{\chi(x)}{\int_{[0,\ell]^{2}}\chi(y)\dd y}\quad\text{with}\quad
\chi(x)=e^{-|x-x_{0}^{*}|^2/2\sigma_{0}^{2}}\mathbb{I}_{[0,\ell]^{2}}(x)\quad\text{for all }x\in\RR^{d}.
\ee
Notice that the data above satisfy {\bf(H\ref{h:h1}), (H\ref{h:h2})}, and {\bf(H\ref{h:initial_condition})}, with $p=\infty$. In our tests below, we choose $T=1$, $\ell=2$, $x_{0}^*=(0.75,0.75)$, $\sigma_0=0.07$ in the initial condition, $\bar x=(1.75,1.75)$,  $R=5$, $\sigma=0.25$, and two values $\gamma=0.5$ and $\gamma=3$ in the running cost $F$. Since in this two-dimensional example the computational cost to solve~\eqref{MFGfully} is important, in view of the discussion in Section~\ref{sec:example_explicit_solution} we implement the area-weighting method of Section~\ref{sec:area_weighting} to approximate the integrals in~\eqref{lg_scheme}. The integrals in~\eqref{lg_scheme_initial_condition}, to approximate the initial condition $m_{0}^{*}$, are computed by using the midpoint rule.  
We set  $\Dx=0.025$, $\Dt=\Dx^{2/3}$, and the mollifier in \eqref{eq:v_epsilon} is defined with $\RR^{2}\ni x\mapsto \rho(x)=e^{-|x|^2/2}/2\pi\in\RR$ and $\eps =\sqrt{\Dt}/2$. Figure 1 shows the approximation $\overline{\mathsf m}^\Delta$ of the exact distribution $m^*$ in the $x_1$-$x_2$ plane obtained after solving~\eqref{MFGfully} for $\gamma=0.5$ and $\gamma=3$. On the left, we display the evolution of the initial distribution, concentrated around $x_{0}^{*}$,  by overlaying the distributions 
$\overline{\mathsf m}^\Delta(t_k,\cdot)$ for $k \in \I_{\Dt}$. On the right, we display only the final distribution $\ov {\mathsf m}^\Delta(T,\cdot)$. The simulation shows the effect of the positive constant $\gamma$, which weights the importance of reaching the target point $\bar{x}$. If $\gamma=0.5$, the aversion to crowed regions, modeled by the second term in the definition of $F$, has a more relevant impact on the distribution of the players than the term penalizing the distance to $\bar{x}$, while, if $\gamma=3$, the latter term has a preponderant role in the evolution of the distribution of the agents. 

\begin{figure}[hbt!]
\centering
\begin{subfigure}{0.45\textwidth}
    \includegraphics[width=\textwidth]{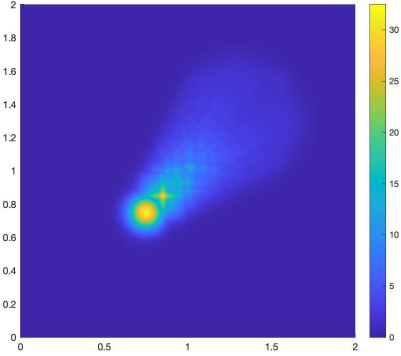}
    \caption{Time evolution $(\ov {\mathsf m}^\Delta(t_k,\cdot))_{k\in \I_{\Dt}}$  for $\gamma=0.5$.}
    \label{mass_K_1}
\end{subfigure}
\hfill
\begin{subfigure}{0.45\textwidth}
    \includegraphics[width=\textwidth]{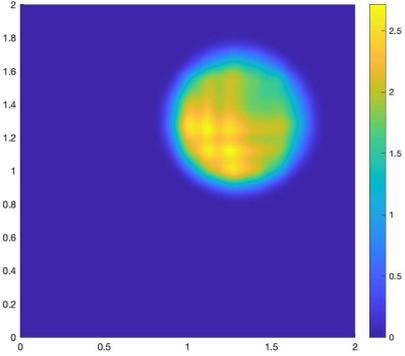}
    \caption{ Final distribution $\ov {\mathsf m}^\Delta(T,\cdot)$ for $\gamma=0.5$.}
    \label{dist_K_1}
\end{subfigure}
\vfill
\vspace{0.6cm}
\begin{subfigure}{0.45\textwidth}
    \centering
    \includegraphics[width=\textwidth]{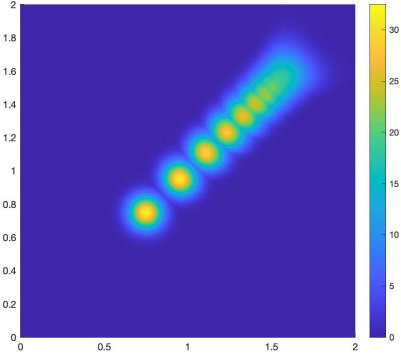}
    \caption{Time evolution $(\ov {\mathsf m}^\Delta(t_k,\cdot))_{k\in \I_{\Dt}}$   for $\gamma=3$.}
    \label{mass_K_2}
\end{subfigure}
\hfill
\label{figures}
\begin{subfigure}{0.45\textwidth}
    \centering
    \includegraphics[width=\textwidth]{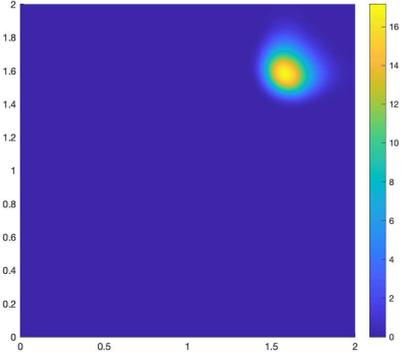}
    \caption{Final distribution $\ov {\mathsf m}^\Delta(T,\cdot)$ for $\gamma=3$.}
    \label{dist_K_2}
\end{subfigure}
\caption{Approximation of $m^*$ in both cases $\gamma=0.5$ and $\gamma=3$.}
\end{figure}

\appendix
\section{}
\label{appendix}
\begin{proof}[Proof of Proposition~\ref{prop:value_function}]   Let us fix $(t,x)\in [0,T[\times\RR^{d}$.  The existence of $\alpha^{t,x}\in L^{2}\big([t,T];\RR^{d}\big)$, such that $v[\mu](t,x)=J^{t,x}[\mu](\alpha^{t,x})$, follows from \eqref{h:L_bounded_below_quadratic_term}, the continuity assumption on $F$ and $G$ in~{\bf(H\ref{h:h2})}, and the direct method in the calculus of variations. Setting 
$\alpha_{0}(s)=0$ for all $s\in [t,T]$, the inequalities $J^{t,x}[\mu](\alpha^{t,x})\leq J^{t,x}[\mu](\alpha_{0}(s))$,~\eqref{h:L_bounded_above_quadratic_term}, ~\eqref{h:F_bounded},~\eqref{h:G_bounded}, and~\eqref{h:L_bounded_below_quadratic_term}, imply that 
\be 
\label{e:uniform_l2_bound_above}
\int_{t}^{T}|\alpha^{t,x}(s)|^2\dd s\leq \tilde{C}:=\frac{T(C_{L,2}+2C_{F,1}+C_{L,7})+2C_{G,1}}{C_{L,6}}.
\ee
In particular, setting $\A^{t}=\Big\{\alpha\in L^{2}\big([t,T];\RR^{d}\big),\;\int_{t}^{T}|\alpha(s)|^2\dd s\leq \tilde{C}\Big\}$, we have 
\begin{equation}
\label{def:v_mu_bounded_integral}
v[\mu](t,x)=\inf\Big\{J^{t,x}[\mu](\alpha)\,\big|\,\alpha\in\A^{t}\Big\}.
\end{equation}
Thus, assertion~\eqref{prop:value_function_i_bis} follows from ~\eqref{h:L_bounded_above_quadratic_term},~\eqref{h:L_bounded_below_quadratic_term},
~\eqref{e:uniform_l2_bound_above},~\eqref{h:F_bounded}, and~\eqref{h:G_bounded}. Moreover, it follows from conditions~\eqref{h:L_Lipschitz},~\eqref{h:F_Lipschitz},~\eqref{h:G_Lipschitz}, and expression~\eqref{def:v_mu_bounded_integral} that, for every $y\in\RR^d$, we have
\begin{align}
|v[\mu](t,x)-v[\mu](t,y)|&\leq\sup_{\alpha\in\A^{t}}\big|J^{t,x}[\mu](\alpha)-J^{t,y}[\mu](\alpha)\big|\nonumber\\
&\leq\sup_{\alpha\in\A^{t}}\Bigg\{\int_{t}^{T}\Big(C_{L,3}(1+|\alpha(s)|^2)+C_{F,2}\Big)|X^{t,x,\alpha}(s)-X^{t,y,\alpha}(s)|\dd s\nonumber\\
&\hspace{6.3cm}+C_{G,2}|X^{t,x,\alpha}(T)-X^{t,y,\alpha}(T)|\Bigg\}\nonumber\\
&\leq\Big(T(C_{L,3}+C_{F,2})+C_{L,3}\tilde{C}+C_{G,2}\Big)|x-y|,
\nonumber
\end{align}
which shows~\eqref{prop:value_function_ii}. Let us set $\ov{X}=X^{t,x,\alpha^{t,x}}$ and let $s\in[t,T[$. Since $v[\mu]$ satisfies the dynamic programming inequality
\begin{multline*}
v[\mu](s,\ov{X}(s))\leq\int_{s}^{s+h}\Big(L(X^{s,\ov{X}(s),\alpha}(r),\alpha(r))+F(X^{s,\ov{X}(s),\alpha}(r),\mu(r))\Big)\dd r+v[\mu]\big(s+h,X^{s,\ov{X}(s),\alpha}(s+h)\big),
\end{multline*}
for all $h\in [0,T-s[$ and $\alpha\in L^{2}([t,T];\RR^d)$, by taking $\alpha=\alpha_0$, the equality 
\be
v[\mu](s,\ov{X}(s))=J^{s,\ov{X}(s)}[\mu](\alpha^{t,x}|_{[s,T]}),
\ee
the estimates~\eqref{h:L_bounded_below_quadratic_term},~\eqref{h:L_bounded_above_quadratic_term},
~\eqref{h:F_bounded}, the equality $X^{s,\ov{X}(s),\alpha_{0}}(s+h)=\ov{X}(s)$, and~\eqref{eq:value_function_Lipschitz}, imply that
\begin{align}
C_{L,6}\int_{s}^{s+h}|\alpha^{t,x}(r)|^2\dd r&\leq h(C_{F,1}+C_{L,2})+h(C_{L,7}+C_{F,1})\nonumber\\
&\hspace{0.3cm}+v[\mu](s+h,\ov{X}(s))-v[\mu](s+h,\ov{X}(s+h))\nonumber\\
&\leq h(2C_{F,1}+C_{L,2}+C_{L,7})+C_{\text{{\rm Lip}}}\int_{s}^{s+h}|\alpha^{t,x}(r)|\dd r.
\nonumber
\end{align}
By Young's inequality, we get the existence of $C>0$, independent of $(\mu,t,x)$, such that 
$$
\int_{s}^{s+h}|\alpha^{t,x}(r)|^2\dd r\leq C h
$$
and, hence, by the Lebesgue differentiation theorem (see e.g.~\cite{MR2267655}), we have $\alpha^{t,x}\in L^{\infty}([0,T];\RR^d)$ and $\|\alpha^{t,x}\|_{L^{\infty}([0,T];\RR^d)}\leq \sqrt{C}$, which shows~\eqref{prop:value_function_i}.

Finally, in order to show~\eqref{prop:value_function_iii}, notice that, for every $y\in\RR^d$,~\eqref{h:L_second_order_derivative_x_bounded_above} implies that
\begin{equation}
\label{e:L_semiconcavity_alpha_square}
L(x+y,a)-2L(x,a)+L(x-y,a)\leq C_{L,5}(1+|a|^2)|y|^2\quad\text{for all }a\in\RR^d. 
\end{equation} 
Estimates~\eqref{e:L_semiconcavity_alpha_square},~\eqref{h:F_semiconcave},
~\eqref{h:G_semiconcave}, and~\eqref{e:uniform_l2_bound_above}, imply
\begin{align}
v(t,x+y)+v(t,x-y)&\leq \int_{t}^{T}\Big(L(X^{t,x+y,\alpha^{t,x}}(s),\alpha^{t,x}(s))+L(X^{t,x-y,\alpha^{t,x}}(s),\alpha^{t,x}(s))\nonumber\\
&\hspace{0.3cm}+F(X^{t,x+y,\alpha^{t,x}}(s),\mu(s))+F(X^{t,x-y,\alpha^{t,x}}(s),\mu(s))\Big)\dd s\nonumber\\
&\hspace{0.3cm}+G(X^{t,x+y,\alpha^{t,x}}(T),\mu(T))+G(X^{t,x-y,\alpha^{t,x}}(T),\mu(T))\nonumber\\
&\leq 2\int_{t}^{T}\Big(L(X^{t,x,\alpha^{t,x}}(s),\alpha^{t,x}(s))+F(X^{t,x,\alpha^{t,x}}(s),\mu(s))\Big)\dd s\\
&\hspace{0.3cm}+2 G(X^{t,x,\alpha^{t,x}}(T),\mu(T))\nonumber\\
&\hspace{0.3cm}+\int_{t}^{T}\Big(C_{L,5}(1+|\alpha^{t,x}(s)|^2)+C_{F,3}\Big)|y|^2\dd s+C_{G,3}|y|^2\nonumber\\
&\leq 2v[\mu](t,x)+\Big(T(C_{L,5}+C_{F,3})+C_{L,5}\tilde{C}+C_{G,3}\Big)|y|^2, \nonumber
\end{align}
from which~\eqref{eq:value_function_sc} follows. 
\end{proof}
\bibliographystyle{plain}
\bibliography{bibFP}
\end{document}